\theoremstyle{plain}
\newtheorem{theorem}[equation]{Theorem}
\newtheorem{lemma}[equation]{Lemma}
\newtheorem{corollary}[equation]{Corollary}
\newtheorem{proposition}[equation]{Proposition}
\theoremstyle{definition}
\newtheorem{definition}[equation]{Definition}
\theoremstyle{remark}
\newtheorem{remark}[equation]{Remark}
\numberwithin{equation}{section}
\newcommand{\eps}{\epsilon}
\newcommand{\dist}{\operatorname{dist}}
\newcommand{\re}{\mathbb{R}}
\newcommand{\rn}{{\mathbb{R}^n}}
\DeclareMathOperator{\supp}{supp}
\DeclareMathOperator*{\esssup}{ess\,sup}
\def\div{\mathop{\operatorname{div}}\nolimits}
\author{Simon Bortz}
\author{Moritz Egert}
\author{Olli Saari}
\address{Moritz Egert, Laboratoire de Math\'{e}matiques d'Orsay, Univ.~Paris-Sud, CNRS, Universit\'{e} Paris-Saclay, 91405 Orsay, France}
\email{moritz.egert@math.u-psud.fr}
\address{Simon Bortz, Department of Mathematics, University of Washington, Seattle, WA 98195, USA}
\email{sibortz@uw.edu} 
\address{Olli Saari, Mathematical Institute, University of Bonn, Endenicher Allee 60, 53115 Bonn, Germany}
\email{saari@math.uni-bonn.de} 
\subjclass[2010]{35K92, 35K10, 42B25, 47J35.}
\date{October 29, 2019}
\begin{document}
\allowdisplaybreaks

\title{Sobolev Contractivity of Gradient Flow Maximal Functions}
\begin{abstract}
We prove that the energy dissipation property of gradient flows extends to the semigroup maximal operators in various settings. In particular, we show that the vertical maximal function relative to the $p$-parabolic extension does not increase the $\dot{W}^{1,p}$ norm of $\dot{W}^{1,p}(\rn) \cap L^{2}(\rn)$ functions when $p > 2$. We also obtain analogous results in the setting of uniformly parabolic and elliptic equations with bounded, measurable, real and symmetric coefficients, where the solutions do not have a representation formula via a convolution.
\end{abstract}
\maketitle

\section{Introduction}
\noindent Consider a positive continuously differentiable energy functional $\mathcal{F}$ on a Banach space $X$ embedded in a Hilbert space. We can define the gradient of $\mathcal{F}$ via the ambient inner product as $\mathcal{F'}(u)v = \langle \nabla \mathcal{F}u, v \rangle$ and study the related gradient flow obeying
\begin{align}
\label{eq:gradient flow}
\dot{u} + \nabla \mathcal{F}(u) = 0.
\end{align}
According to the fundamental Lyapunov principle, expressed by the formal calculation
\begin{align*}
\frac{d}{dt} \mathcal{F}(u(t)) = \mathcal{F}'(u(t)) \dot{u}(t) = - \langle \dot{u}(t), \dot{u}(t) \rangle \leq 0,
\end{align*}
solutions to such abstract diffusion equations dissipate energy as time passes. In other words, if $u$ is a solution to the Cauchy problem \eqref{eq:gradient flow} with initial data $f$, then the energy contraction property $\mathcal{F}(u(t)) \leq \mathcal{F}(f)$ holds for all $t \geq 0$. This setup can be made rigorous for countless examples, including the heat equation, the total variation flow and the mean curvature flow to mention a few. See for example \cite{Chill2010, Lions, Pazy, Brezis} and references therein.

In the present paper, we propose a seemingly new paradigm. Suppose that $X$ is a space of real functions. 
Then not only does the energy decrease along the gradient flow, but also the related vertical maximal operator, mapping non-negative initial data $f$ to 
\begin{align*}
u^*(x) = \sup_{t>0} u(t,x),
\end{align*}
is an energy contraction in the sense that $ \mathcal{F}(u^*) \leq \mathcal{F}(f)$.

The objective of this article is to implement this idea for two important energy quantities:
\begin{itemize}
\item The Sobolev $p$-energy $\mathcal{F}(u) = \frac{1}{p} \int_{\rn} |\nabla u(x)|^p \, dx$ with $p>2$, whose gradient flow is the degenerate $p$-parabolic equation 
\[ \dot{u} - \Delta_p u  :=  \dot{u} - \div(|\nabla u|^{p-2}\nabla u)  = 0 . \]
\item The quadratic energy $\mathcal{F}(u) = \frac{1}{2} \int_{\rn} A(x) \nabla u(x) \cdot \nabla u(x) \, dx$ with a bounded measurable, elliptic and symmetric conductivity matrix $A$, whose gradient flow is the linear uniformly parabolic equation 
\[\dot{u} - \div(A\nabla u) = 0.\]
\end{itemize}

Our main result for the $p$-energy flow relies on global well-posedness of the corresponding Cauchy problem in a natural class of continuous energy solutions. The preliminaries on that can be found in Section~\ref{sec:p_grad_flow} and the proof is given in Section~\ref{sec:proof-p}.
\begin{theorem}
	\label{thmintro:1}
	Let $p > 2$, $n \geq 1$, $f \in L^{2}(\rn) \cap \dot{W}^{1,p}(\rn)$ be non-negative and $S_t f$ the unique energy solution to the Cauchy problem 
	\begin{eqnarray*}
	\begin{split}
	\dot u(t,x) - \Delta_p u(t,x) &= 0&  \quad &\text{for $(t,x) \in (0,\infty) \times \rn$,} \\
	u(0,x) &= f(x)& \quad &\text{for $x \in \rn$.}
	\end{split}
	\end{eqnarray*}
	Define $S^*f(x) := \sup_{t > 0} S_t f (x)$. Then $S^*f$ is weakly differentiable and satisfies
	\[\int_{\rn} | \nabla S^* f(x) |^{p} \, dx \leq \int_{\rn } |\nabla f |^{p} \, dx .\]
\end{theorem}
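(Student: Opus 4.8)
The plan is to establish the energy inequality first at the level of a time-discretized flow (an implicit Euler / minimizing-movements scheme) where monotonicity of the maximal function is easier to track, and then pass to the limit. The key structural fact I would exploit is that the $p$-parabolic flow is a gradient flow for $\mathcal{F}(u) = \tfrac1p\int_{\rn}|\nabla u|^p$, so the resolvent (implicit Euler) step $J_\tau f := \arg\min_v \big\{ \tfrac1{2\tau}\|v-f\|_{L^2}^2 + \mathcal{F}(v) \big\}$ is a well-defined $L^2$-contraction that does not increase $\mathcal{F}$. Fix a time horizon and a step size $\tau$, set $u_0 = f$ and $u_{k} = J_\tau u_{k-1}$, and define the discrete maximal function $M_N := \max_{0 \le k \le N} u_k$ (this is a finite max, so no measurability issues). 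The crucial claim, which I will call the discrete contraction estimate, is $\mathcal{F}(M_N) \le \mathcal{F}(f)$, proved by induction on $N$.

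For the inductive step one writes $M_N = \max(M_{N-1}, u_N)$. On the set $\{u_N \le M_{N-1}\}$ we have $M_N = M_{N-1}$, and on $\{u_N > M_{N-1}\}$ we have $M_N = u_N$; so $\nabla M_N = \nabla M_{N-1}$ a.e.\ on the first set and $\nabla M_N = \nabla u_N$ a.e.\ on the second. Hence
\begin{align*}
\mathcal{F}(M_N) = \frac1p\int_{\{u_N \le M_{N-1}\}} |\nabla M_{N-1}|^p + \frac1p\int_{\{u_N > M_{N-1}\}} |\nabla u_N|^p .
\end{align*}
The first term is at most $\mathcal{F}(M_{N-1}) \le \mathcal{F}(f)$ by induction, but to close the argument I need to absorb the second term. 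This is exactly where the gradient-flow (energy-dissipation) structure enters: I want to compare $\int_{\{u_N > M_{N-1}\}}|\nabla u_N|^p$ against $\int_{\{u_N > M_{N-1}\}}|\nabla M_{N-1}|^p$, i.e.\ to show that replacing $M_{N-1}$ by the "lower envelope" $\min(M_{N-1}, u_N)$ and then applying one resolvent step cannot increase the localized $p$-energy. The mechanism is that $u_N = J_\tau u_{N-1}$ and $M_{N-1} \ge u_{N-1}$ pointwise, combined with the comparison principle for $J_\tau$ (order preservation) and the variational characterization: testing the Euler–Lagrange equation for $u_N$ against the truncation $(u_N - M_{N-1})_+$ yields, after an integration by parts and Young's inequality,
\begin{align*}
\int_{\{u_N > M_{N-1}\}} |\nabla u_N|^p \le \int_{\{u_N > M_{N-1}\}} |\nabla M_{N-1}|^{p-2}\nabla M_{N-1}\cdot \nabla u_N + (\text{a nonpositive }L^2\text{ term}),
\end{align*}
and then a second application of Young's inequality with exponents $p'$ and $p$ converts the right-hand side into $\int_{\{u_N>M_{N-1}\}}|\nabla M_{N-1}|^p$. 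Summing the two pieces gives $\mathcal{F}(M_N) \le \mathcal{F}(M_{N-1}) \le \mathcal{F}(f)$, completing the induction.

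With the discrete estimate in hand, the remaining work is soft. As $\tau \to 0$ with $N\tau$ ranging over a fixed horizon $[0,T]$, the Crandall–Liggett / minimizing-movements theory (available in the energy-solution framework of Section~\ref{sec:p_grad_flow}) gives $u_{\lfloor t/\tau\rfloor} \to S_t f$ in $L^2$ and, along a subsequence, the piecewise-constant interpolants converge weakly in $\dot W^{1,p}$ on compact time intervals; a diagonal argument and lower semicontinuity of $\mathcal{F}$ under weak $\dot W^{1,p}$ convergence then yield, for the finite-time maximal function $S^*_T f := \sup_{0<t\le T} S_t f$, the bound $\mathcal{F}(S^*_T f) \le \liminf \mathcal{F}(M_N) \le \mathcal{F}(f)$; here one should check that $\sup$ over the discrete times approximates $\sup$ over all $t\in[0,T]$, using continuity in $t$ of $S_t f$. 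Finally, $S^*_T f$ increases to $S^* f$ as $T\to\infty$, the $\dot W^{1,p}$ norms are uniformly bounded, so $S^* f \in \dot W^{1,p}(\rn)$ with $\|\nabla S^* f\|_{L^p} \le \|\nabla f\|_{L^p}$, and monotone convergence upgrades the weak limit to $S^* f$ itself. The main obstacle I anticipate is the localized energy comparison in the inductive step — making rigorous that one resolvent step does not increase $\int_{\{u_N > M_{N-1}\}}|\nabla u_N|^p$ beyond the corresponding integral of $|\nabla M_{N-1}|^p$, since this requires carefully justifying the integration by parts with the (only Lipschitz, not smooth) test function $(u_N - M_{N-1})_+$ in a degenerate-elliptic setting and controlling the boundary/co-area contributions on the level set $\{u_N = M_{N-1}\}$.
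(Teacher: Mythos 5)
Your route is genuinely different from the paper's. The paper never discretizes in time: it shows (Lemma~\ref{lemma:abstract_comparison}) that the truncated maximal function inherits an \emph{elliptic} comparison principle on the detachment set $E=\{S^{*,T}f>f\}$ from the parabolic comparison principle of Lemma~\ref{lemma:p-parab_comparison}, hence is $\mathcal{A}$-subharmonic there, hence a $p$-subminimizer (Lemma~\ref{lemma:comparison_submin}), and then uses local Lipschitz regularity and finite speed of propagation to make $S^{*,T}f-f$ an admissible competitor. Your discrete inductive step is, as far as I can check, correct and rather clean: testing the Euler--Lagrange equation of the resolvent step with $(u_N-M_{N-1})_+$ is legitimate because this function lies in $V=L^2(\rn)\cap\dot W^{1,p}(\rn)$ and the functional defining $J_\tau$ is convex and G\^ateaux differentiable on $V$, so no integration by parts against a merely Lipschitz test function and no co-area/boundary contribution on $\{u_N=M_{N-1}\}$ ever enters --- the obstacle you single out at the end is not the real one. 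The time-difference term indeed has a sign because $M_{N-1}\geq u_{N-1}$ pointwise, and Young's inequality with exponents $p$ and $p'$ closes the induction, giving $\mathcal{F}(M_N)\leq\mathcal{F}(f)$ with no regularity theory at all. This is an attractive alternative to the paper's potential-theoretic argument.

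The genuine gap is the limit passage $\tau\to0$. You need the \emph{full} discrete maximum $M_N=\max_{0\le k\le N}u_k$ (the induction breaks if you thin out the time grid, since it uses $M_{N-1}\ge u_{N-1}$), and $N\sim T/\tau\to\infty$. Uniform-in-$k$ convergence $\|u_k^{(\tau)}-S_{k\tau}f\|_{L^2}\to0$ does not control $\|\max_k|u_k^{(\tau)}-S_{k\tau}f|\|_{L^2}$, which is what you need to identify the weak $L^p$ limit of $\nabla M_N$ with $\nabla S^{*,T}f$; with data only in $L^2\cap\dot W^{1,p}$ the implicit Euler error per slice is only $O(\sqrt\tau)$ and the crude bound $\sum_k\|\cdot\|_{L^2}^2\lesssim N\tau\sim T$ does not vanish. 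A fix is to first replace $f$ by $S_\delta f$ (the analogue of the paper's Step~1 starting from $S_{1/T}g$), so that the data lies in the domain of the subdifferential and the scheme converges with rate $O(\tau)$, whence $\sum_k\|u_k^{(\tau)}-S_{k\tau}f\|_{L^2}^2\lesssim T\tau\to0$, a.e.\ convergence of the max along a subsequence, and then continuity in $t$ identifies the sup over grid times with $S^{*,T}S_\delta f$; afterwards let $\delta\to0$ and $T\to\infty$ using \eqref{energyeq.eq2}. Two further points you gloss over: the minimizing-movements limit must be identified with the paper's energy solution (this follows from the uniqueness step of Proposition~\ref{prop:existence-energy-solutions}, but only after checking that the subdifferential flow is an energy solution in the sense of Definition~\ref{def:energy-solution}); and in the final limits you need $S^{*}f\in L^1_{loc}(\rn)$ to pass to distributional limits --- since there is no convolution kernel, this is not free and is exactly what Step~4 of the paper's proof (the Poincar\'e--Chebyshev argument) supplies, so you must include such an argument or import it.
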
    
In the case of linear divergence form equation with rough coefficients, we extend the initial data via the heat semigroup generated by $L := \div(A \nabla \cdot)$. 
The necessary background is given in Section~\ref{sec:semigroups} and the proof can be found in Section~\ref{sec:heat and poisson}.
\begin{theorem}
	\label{thm:A-para}
	Let $L$ be a uniformly elliptic operator with bounded, measurable and symmetric coefficient matrix $A$. Let $f \in W^{1,2}(\rn)$ be non-negative and define $H^*f(x) := \sup_{t > 0} e^{tL} f(x)$. Then $H^{*} f $ is weakly differentiable and
	\[  \int_{\rn} A \nabla H^* f (x)\cdot  \nabla H^* f(x) \, dx   \leq \int_{\rn} A \nabla f(x) \cdot  \nabla f(x) \, dx  . \]
\end{theorem}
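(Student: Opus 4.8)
The plan is to apply the Lyapunov (energy‑dissipation) mechanism not to $u(t):=e^{tL}f$ itself but to its running maximum $V(t,x):=\sup_{s\ge t}u(s,x)$, whose value at time $0^{+}$ is exactly $H^{*}f$. The ingredients are the facts recorded in Section~\ref{sec:semigroups}: for $f\in W^{1,2}(\rn)$, $u\in C((0,\infty)\times\rn)$ by De Giorgi--Nash, $u(t)\to f$ in $W^{1,2}(\rn)$ as $t\downarrow 0$, $\partial_{t}u(t)=Lu(t)\in L^{2}(\rn)$ for $t>0$, the energy dissipation $\mathcal{F}(u(t))\le\mathcal{F}(f)$; the Aronson bound $|p_{t}(x,y)|\le Ct^{-n/2}e^{-c|x-y|^{2}/t}$ for the kernel $p_{t}$ of $e^{tL}$, whence $0\le H^{*}f\le C\,Mf$ (Hardy--Littlewood maximal function), so $H^{*}f\in L^{2}(\rn)$, and $u(t,x)\to 0$ locally uniformly as $t\to\infty$; and -- here the symmetry of $A$ enters -- the gradient bound $|\nabla_{x}p_{t}(x,y)|\le Ct^{-(n+1)/2}e^{-c|x-y|^{2}/t}$, which yields $|\nabla u(t,x)|\lesssim t^{-1/2}Mf(x)$.

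Consequences for $V$: it is non-increasing in $t$ with $V(0^{+},\cdot)=H^{*}f$ and $V\ge u$; by the pointwise bounds above, $V(t,\cdot)\in W^{1,2}(\rn)$ and $\partial_{t}V(t,\cdot)\in L^{2}(\rn)$ for each $t>0$, while $V(t,\cdot)\in L^{2}(\rn)$ uniformly in $t$; and since $V$ is the pointwise supremum of the locally uniformly bounded family $\{u(\cdot+r,\cdot):r\ge 0\}$ of solutions (locally bounded because $\sup_{s\ge t_{0}}\|u(s,\cdot)\|_{\infty}<\infty$ for $t_{0}>0$), it is a \emph{subsolution}, $\partial_{t}V-\div(A\nabla V)\le 0$ on $(0,\infty)\times\rn$. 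The key structural input is a complementarity lemma: $\{V>u\}$ is open and $\partial_{t}V=0$ there, whereas on $\{V=u\}$ one has $\partial_{t}u\le\partial_{t}V\le 0$; in particular $\partial_{t}V\cdot(V-u)=0$ a.e. One proves this by an argmax argument -- writing $V(t_{0},x_{0})=u(s^{*},x_{0})$ with $s^{*}\ge t_{0}$, if $s^{*}>t_{0}$ then $t\mapsto V(t,x_{0})$ must be constant on $[t_{0},s^{*})$ (it is squeezed between $u(s^{*},x_{0})=V(t_{0},x_{0})$ and the non-increasing $V(\cdot,x_{0})$), and if $s^{*}=t_{0}$ then $u(t,x_{0})\le V(t,x_{0})\le V(t_{0},x_{0})=u(t_{0},x_{0})$ for $t>t_{0}$ forces $\partial_{t}u(t_{0},x_{0})\le\partial_{t}V(t_{0},x_{0})\le 0$.

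With these in hand, fix $0\le\chi\in C_{c}^{\infty}((0,\infty))$ and test the subsolution inequality for $V$ against $\psi:=\chi(t)\,(V(t,x)-u(t,x))\ge 0$, an admissible test function by the $L^{2}$-bounds above:
\[ \int_{0}^{\infty}\!\!\int_{\rn}\chi\,\partial_{t}V\,(V-u)\,dx\,dt \;+\; \int_{0}^{\infty}\!\!\int_{\rn}\chi\,A\nabla V\cdot\nabla(V-u)\,dx\,dt \;\le\;0 . \]
The first integrand vanishes by complementarity, so $\iint\chi\,A\nabla V\cdot\nabla V\le\iint\chi\,A\nabla V\cdot\nabla u$; Cauchy--Schwarz for the positive semidefinite form $(\varphi,\vartheta)\mapsto\iint\chi\,A\varphi\cdot\vartheta$ then gives $\iint\chi\,A\nabla V\cdot\nabla V\le\iint\chi\,A\nabla u\cdot\nabla u$, and since $\chi\ge 0$ was arbitrary, $\mathcal{F}(V(t))\le\mathcal{F}(u(t))\le\mathcal{F}(f)$ for a.e.\ $t>0$. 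Finally choose $t_{j}\downarrow 0$ among such $t$'s: $V(t_{j})\uparrow H^{*}f$ pointwise, dominated by $C\,Mf\in L^{2}_{\mathrm{loc}}(\rn)$, so the uniform bound $\mathcal{F}(V(t_{j}))\le\mathcal{F}(f)$ forces $\nabla V(t_{j})\rightharpoonup\nabla H^{*}f$ weakly in $L^{2}(\rn)$ (in particular $H^{*}f$ is weakly differentiable), and weak lower semicontinuity of the form gives $\int_{\rn}A\nabla H^{*}f\cdot\nabla H^{*}f\le\liminf_{j}\int_{\rn}A\nabla V(t_{j})\cdot\nabla V(t_{j})\le\int_{\rn}A\nabla f\cdot\nabla f$, which is the claim.

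I expect the main obstacle to be the complementarity lemma, together with keeping the preceding manipulations honest under merely measurable coefficients. One must verify that $V$ is a subsolution in the robust sense needed to use the non-smooth test function $\chi(V-u)$ -- this rests on the a priori $L^{2}$-estimates for $\nabla V$ and $\partial_{t}V$, hence ultimately on the gradient heat-kernel bound and so on the symmetry of $A$ -- and one must check that the time-derivative term $\iint\chi\,\partial_{t}V\,(V-u)$ genuinely vanishes rather than hiding an uncontrolled contribution on the free boundary $\partial\{V>u\}$; establishing $\partial_{t}V=0$ on $\{V>u\}$ and $\partial_{t}V\cdot(V-u)=0$ a.e.\ with the correct pointwise meaning is where the work goes, and where the parabolic regularity theory of Section~\ref{sec:semigroups} is used.
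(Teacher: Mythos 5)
Your route (running maximum $V(t,x)=\sup_{s\ge t}u(s,x)$ as a parabolic subsolution, complementarity of $\partial_t V$ with $V-u$, direct energy testing, then $t\downarrow 0$) is genuinely different from the paper's (which never uses a parabolic subsolution property of the maximal function: it shows via Lemma~\ref{lemma:abstract_comparison} that $H^*f$ obeys the \emph{elliptic} comparison principle on the detachment set $\{H^*f>f\}$, invokes Corollary~7.21 of \cite{Heinonen2018} and Lemma~\ref{lemma:comparison_submin} to make it an energy subminimizer there, and then approximates in two stages $f=H_\epsilon g$, $\epsilon\to0$). However, as written your argument has a genuine gap at its foundation: the pointwise Gaussian gradient bound $|\nabla_x p_t(x,y)|\le Ct^{-(n+1)/2}e^{-c|x-y|^2/t}$ is \emph{false} for operators with merely bounded measurable coefficients, symmetric or not; symmetry buys self-adjointness and the Aronson bound on $p_t$ itself, but gradient bounds of this type require additional regularity of $A$ (e.g.\ H\"older or Dini coefficients). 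Only integrated estimates such as $t^{1/2}\|\nabla H_tf\|_{L^2}\lesssim\|f\|_{L^2}$ (Lemma~\ref{lem:operator bounds}) are available, which is exactly why the paper derives the qualitative Sobolev bound for the maximal function through the Haj\l{}asz characterization (Lemma~\ref{lem:Hderv}, Proposition~\ref{prop:qualitative_sobolev}) rather than through a kernel estimate. Since your claims that $V(t,\cdot)\in W^{1,2}(\rn)$, that $\nabla V$ is controlled, and that $\chi(t)(V-u)$ is an admissible test function all rest on the false pointwise bound $|\nabla u(t,x)|\lesssim t^{-1/2}Mf(x)$, this part must be rebuilt; it is repairable, because the fundamental-theorem-of-calculus trick in the proof of Proposition~\ref{prop:qualitative_sobolev} does give $\|\sup_{s\ge t}|\nabla u(s,\cdot)|\|_{L^2}\lesssim t^{-1/2}\|f\|_{L^2}$, but that is a different argument from the one you propose.

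The second substantive gap is the assertion that $V$ is a weak subsolution of $\partial_t-\div(A\nabla\cdot)$ on $(0,\infty)\times\rn$ simply because it is a pointwise supremum of solutions. In the distributional, measurable-coefficient framework this is not automatic: you need at least that the maximum of two weak solutions is a weak subsolution (a Kato-type inequality, using $\partial_t u=Lu\in L^2$), a monotone passage to the supremum over a countable dense set of time shifts with uniform $W^{1,2}_{loc}$ bounds, and then a meaning for $\partial_t V$ in your testing --- a priori the time derivative of the monotone function $V(\cdot,x)$ is only a non-positive measure, so the complementarity identity, which is correct pointwise (on each component of $\{t:V(t,x)>u(t,x)\}$ the function $V(\cdot,x)$ is indeed constant), has to be paired against that measure with some care, and the spatially non-compactly-supported test function $\chi(t)(V-u)$ requires a cutoff argument. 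None of this is fatal, but it is precisely the work the paper avoids by the comparison-principle/potential-theory route; your final limiting step ($\mathcal{F}(V(t_j))\le\mathcal{F}(u(t_j))\le\mathcal{F}(f)$, weak compactness, lower semicontinuity) does match the paper's Step~2 and is fine once the earlier steps are secured.
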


Our results were largely inspired by \cite{Carneiro2013} and \cite{Carneiro2018}, where similar contractivity inequalities were established for several variants of heat and Poisson kernels relative to the Laplacian. Qualitative $\dot{W}^{1,2} \to \dot{W}^{1,2}$ bounds for maximal functions defined through convolution kernels easily follow from \cite{Kinnunen1997}, but the main contribution of \cite{Carneiro2013,Carneiro2018} was to show that some special maximal functions are indeed contractions on that space. This adds a co-equal perspective to the inequalities studied here. The first results on Sobolev contractivity appeared in \cite{Tanaka2002,Aldaz2007}, where the one-dimensional non-centred Hardy--Littlewood maximal function $M$ was shown to be a contraction in $\dot{W}^{1,1}(\mathbb{R})$. After the generalization to convolution kernels and $\dot{W}^{1,2}$ in \cite{Carneiro2013,Carneiro2018}, we take this program  further to the nonlinear setting of $\dot{W}^{1,p}(\rn)$ spaces with $p > 2$ (Theorem~\ref{thmintro:1}) 
and semigroups far beyond the convolution kernel case (Theorem~\ref{thm:A-para}). 

General semigroup maximal functions appear naturally, for instance, in the context of Hardy spaces adapted to operators \cite{HofMay2009} and elliptic boundary value problems \cite{Yang-Yang, Auscher-Russ}. Our third main result is about the Poisson semigroup, which has an equally important role in that setting~\cite{Mayboroda}.
\begin{theorem}
	\label{thm:A-poisson}
	Let $L$ be a uniformly elliptic operator with bounded, measurable and symmetric coefficients $A$. Let $f \in W^{1,2}(\rn)$ be non-negative and define $P^*f(x) := \sup_{t > 0} e^{-t(-L)^{1/2}} f(x)$. Then $P^{*} f $ is weakly differentiable and
	\[  \int_{\rn} A \nabla P^* f (x)\cdot  \nabla P^* f(x) \, dx   \leq \int_{\rn} A \nabla f(x) \cdot  \nabla f(x) \, dx  . \]
\end{theorem}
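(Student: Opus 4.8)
The strategy is to reduce the Poisson case to the already-established heat case, Theorem \ref{thm:A-para}, via the subordination formula. Recall that the Poisson semigroup is obtained from the heat semigroup by
\[
e^{-t(-L)^{1/2}} f = \int_0^\infty \frac{t}{2\sqrt{\pi}}\, s^{-3/2} e^{-t^2/(4s)}\, e^{sL} f \, ds,
\]
so that, pointwise for non-negative $f$, the Poisson maximal function $P^*f(x) = \sup_{t>0} e^{-t(-L)^{1/2}}f(x)$ is controlled by an average of the heat extension along the vertical fibre. The first step is to observe that, because the subordination kernel $\phi_t(s) := \frac{t}{2\sqrt{\pi}} s^{-3/2} e^{-t^2/(4s)}$ is a probability density in $s$ for each fixed $t>0$, we get the pointwise bound $e^{-t(-L)^{1/2}} f(x) \le \int_0^\infty \phi_t(s)\, H^*f(x)\, ds = H^*f(x)$ whenever $f \ge 0$, hence
\[
P^*f(x) \le H^*f(x) \qquad \text{for a.e. } x \in \rn.
\]
This immediately gives that $P^*f \in W^{1,2}$-type control is inherited from $H^*f$, but a pointwise domination of functions does \emph{not} by itself give domination of gradients, so this crude bound alone is not enough.

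The real argument must therefore run parallel to the proof of Theorem \ref{thm:A-para} rather than be deduced from its conclusion. I expect the scheme there to be: (i) for each fixed $x$, the supremum over $t$ is attained (or approximated) in a way that lets one linearize; (ii) one shows the "linearized" operator $f \mapsto e^{t(x)L} f$, where $t(x)$ is a measurable selection of near-optimal times, is a contraction on the energy space $\dot{W}^{1,2}_A := \{u : \int A\nabla u \cdot \nabla u < \infty\}$ because each individual $e^{tL}$ is (this is the Lyapunov/energy-dissipation property applied to the gradient flow $\dot u = Lu$, using symmetry of $A$); and (iii) a stability/lower-semicontinuity argument passes from the selection to the genuine maximal function, showing $\nabla P^*f$ exists weakly and $\int A \nabla P^*f \cdot \nabla P^*f \le \liminf$ of the energies of the linearized pieces. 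For the Poisson semigroup, the key additional input is that $e^{-t(-L)^{1/2}}$ is \emph{also} a symmetric contraction semigroup whose generator $-(-L)^{1/2}$ is self-adjoint and negative, and it is the gradient flow of the \emph{same} quadratic energy $\mathcal{F}(u) = \frac12 \int A\nabla u \cdot \nabla u$ — indeed, $\frac{d}{dt}\mathcal{F}(e^{-t(-L)^{1/2}}f) = \langle (-L)^{1/2}f, L e^{-t(-L)^{1/2}}f \rangle \cdot(\text{sign})$, and one checks via the spectral calculus for the self-adjoint operator $-L$ that $\langle (-L)^{1/2} u, (-L) u\rangle = \langle (-L)^{3/2}u, u\rangle \ge 0$, so the energy is non-increasing along the Poisson flow exactly as along the heat flow. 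Thus the same three-step machinery applies verbatim with $e^{tL}$ replaced by $e^{-t(-L)^{1/2}}$.

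Concretely, the steps I would carry out are: first, record the spectral-theoretic facts that $\{e^{-t(-L)^{1/2}}\}_{t>0}$ is a strongly continuous semigroup of self-adjoint contractions on $L^2(\rn)$, that it maps $W^{1,2}(\rn)$ into itself, and that $t \mapsto \mathcal{F}(e^{-t(-L)^{1/2}}f)$ is non-increasing for $f \in W^{1,2}(\rn)$ (equivalently, $\|(-L)^{1/2} e^{-t(-L)^{1/2}} f\|_2$ controls the energy derivative with the correct sign); second, establish pointwise properties of $t \mapsto P_t f(x) := e^{-t(-L)^{1/2}}f(x)$ — continuity in $t$, the limits $P_t f \to f$ as $t \to 0$ and $P_t f \to 0$ as $t \to \infty$ (in a suitable sense), so that the sup defining $P^*f$ behaves well and a measurable near-maximizing selection $t_\delta : \rn \to (0,\infty)$ exists; third, run the linearization-and-stability argument to conclude $P^*f$ is weakly differentiable with $\int A \nabla P^*f \cdot \nabla P^*f \le \int A \nabla f \cdot \nabla f$.

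The main obstacle, as in Theorem \ref{thm:A-para}, is step three: showing that a \emph{pointwise} supremum of a family of functions, each of which has controlled energy, is itself weakly differentiable with energy bounded by the supremum of the individual energies. The standard tool (as in \cite{Kinnunen1997,Carneiro2013}) is that if $u_\alpha \to u$ pointwise a.e.\ and $\{u_\alpha\}$ is bounded in the Hilbert space $\dot{W}^{1,2}_A$, then $u$ lies in that space with $\|u\|_{\dot{W}^{1,2}_A} \le \liminf \|u_\alpha\|_{\dot{W}^{1,2}_A}$; but applying this requires a countable family whose pointwise sup is $P^*f$ and a uniform energy bound along that family, which is exactly where the contractivity of each $e^{-t(-L)^{1/2}}$ on $\dot{W}^{1,2}_A$ is used. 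A secondary technical point, specific to the rough-coefficient setting, is that there is no convolution representation, so the pointwise continuity of $t \mapsto P_t f(x)$ and the interchange of sup with the (weak) gradient must be handled through the functional calculus and De Giorgi–Nash–Moser-type regularity of $L$-caloric and $L$-harmonic extensions rather than by differentiating an explicit kernel; I would borrow these facts from the background developed in Section \ref{sec:semigroups}.
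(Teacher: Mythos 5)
There is a genuine gap at the heart of your plan: the ``linearize along a measurable near-maximizing selection $t_\delta(x)$, use contractivity of each $e^{-t(-L)^{1/2}}$, then pass to the supremum by lower semicontinuity'' scheme cannot deliver the contraction, and it is not how the paper proves either Theorem~\ref{thm:A-para} or Theorem~\ref{thm:A-poisson}. Two separate problems. First, energy contractivity of each fixed-time map $P_t$ says nothing about the variable-time evaluation $x\mapsto P_{t(x)}f(x)$: its gradient contains the term $(\partial_t P_tf)(t(x),x)\,\nabla t(x)$, which is not controlled by $\mathcal{F}(f)$ for any reasonable selection. Second, and more fundamentally, weak lower semicontinuity only helps when the approximating objects \emph{converge} to $P^*f$ with uniformly bounded energy; but $P^*f$ is a pointwise \emph{supremum}, and the supremum (or even the maximum of two members) of a family with energies $\leq \mathcal{F}(f)$ need not have energy $\leq \mathcal{F}(f)$ --- the maximum of two disjointly supported bumps has essentially twice the energy of each. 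This is exactly why the Kinnunen-type lattice argument yields only a bound with a constant and why the paper instead runs a detachment-set argument: for smoothed data $f=P_\eps g$ one shows (Proposition~\ref{prop:qualitative_sobolev}, via the Haj\l asz characterization) that $P^*f\in W^{1,2}(\rn)$, that $E=\{P^*f>f\}$ is open with $\nabla P^*f=\nabla f$ a.e.\ off $E$, and that on $E$ the maximal function satisfies the elliptic comparison principle of Definition~\ref{def:comparison} (via Lemma~\ref{lemma:abstract_comparison}), hence is a subminimizer by Lemma~\ref{lemma:comparison_submin}; choosing competitors $P^*f-\varphi$ with $\varphi\to P^*f-f$ gives the contraction for smoothed data, and the smoothing is removed using Lemma~\ref{lem:semigroup energy dissipation} and weak compactness. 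None of this machinery appears in your step (iii), and no substitute for it is offered.

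You also miss the one point that is genuinely specific to the Poisson case and is the only place where the paper's proof deviates from Theorem~\ref{thm:A-para}: $u(t,x)=P_tf(x)$ solves the \emph{elliptic} equation $(\partial_t^2+L)u=0$ in the half-space, not a parabolic one, so the parabolic comparison principle (Lemma~\ref{lemma:p-parab_comparison}) used to verify the hypotheses of Lemma~\ref{lemma:abstract_comparison} is unavailable. The paper instead applies the elliptic comparison principle (Remark~\ref{rem:comparison}) on finite cylinders $[0,T]\times G$, using the decay $\|P_Tf\|_{L^\infty}\lesssim T^{-n/2}\|f\|_{L^2}$ from Lemma~\ref{lem:boundedness of poisson extension} to control the top boundary up to an error $\eta$, and then lets $T\to\infty$ and $\eta\to 0$. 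On the positive side, your observations that subordination gives $P^*f\leq H^*f$ pointwise but is insufficient for gradients, that the Poisson flow dissipates the same quadratic energy by spectral calculus (this is Lemma~\ref{lem:semigroup energy dissipation}), and that De Giorgi--Nash--Moser regularity must replace kernel differentiation, are all correct ingredients --- but without the comparison-principle/subminimizer core the proposed proof does not close.
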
 

We conclude the introduction by sketching our main line of reasoning and how it can be adapted to different gradient flows. The key observation leading to the sharp bound for the one-dimensional Hardy--Littlewood maximal function in \cite{Aldaz2007} was to notice that $Mf$ cannot have local maxima in the \emph{detachment set} 
\[\{M f > f \}.\] 
This was understood as a generalized convexity property and reinterpreted in a clever way in \cite{Carneiro2013, Carneiro2018}, where it was shown that the heat maximal function of $L=\Delta$ is subharmonic in the detachment set. As our starting point, we reformulate this observation in an abstract context (Lemma~\ref{lemma:abstract_comparison}): if solutions to a gradient system \eqref{eq:gradient flow} admit a suitable comparison principle, then the vertical maximal function is a comparison subsolution of the Euler--Lagrange equation of the relevant energy functional in the detachment set. General prequisites on this will be recalled in Section~\ref{sec:energy}. 

Once the maximal function is connected to the Euler--Lagrange equation, it follows from nonlinear potential theory that the comparison subsolutions in the correct energy class are weak subsolutions and hence energy subminimizers. 
The proof can then be concluded by the correct choice of competitor. However, making the passage from comparison principles to energy minimization rigorous, requires one to take the specific forms of the energy functionals into account. Without convolution structures as in~\cite{Carneiro2013, Carneiro2018}, there is no `soft' argument to guarantee the weak differentiability of $Mf$ even qualitatively. Instead, we have to take advantage of local regularity theory and various approximations to circumvent the fact that $Mf$ might not be an admissible competitor in the energy inequalities. For Theorem~\ref{thmintro:1}, we rely on local Lipschitz continuity of solutions and finite speed of propagation, whereas the theory of analytic heat semigroups comes in handy for the proofs of Theorem~\ref{thm:A-para} and Theorem~\ref{thm:A-poisson}.

To our knowledge, these are the first regularity results for vertical maximal functions without convolution structure. For the convolution case and the Hardy--Littlewood maximal function in particular, the literature on regularity is extensive. Sobolev bounds and continuity were first studied in~\cite{Kinnunen1997,Luiro2007}, and further topics include endpoint Sobolev continuity \cite{Carneiro2017,Madrid2017}, fractional maximal functions \cite{Kinnunen2003, Beltran2019}, local maximal functions \cite{Kinnunen1998, Heikkinen2015} and much more. We expect many of these themes to have their counterparts in the setting of gradient flow maximal functions.

\noindent \textbf{Acknowledgement.} This research was supported by the CNRS through a PEPS JCJC project. The third author was partially supported by DFG SFB 1060 and DFG EXC 2047. The authors would like to thank Katharina Egert for tolerating their persistent presence for the better part of two weeks (and longer in the case of the second named author).

\section{Energy functionals}
\label{sec:energy}

\noindent We set up the definitions following Chapter~5 of \cite{Heinonen2018}. Let $p \in (1,\infty)$.
A \emph{variational kernel with $p$-growth} is a function $F: \rn \times \rn \to [0,\infty)$ such that
\begin{itemize}
\item the mapping $x \mapsto F(x,\xi)$ is measurable for all $\xi$,
\item the mapping $\xi \mapsto F(x,\xi)$ is strictly convex and differentiable for all $x$,
\item there is $\Lambda \in [1, \infty)$ such that for all $x, \xi \in \rn$,
\[\Lambda^{-1} |\xi|^{p} \leq  F(x,\xi) \leq \Lambda |\xi|^{p},  \]
\item for all $\lambda \in \mathbb{R}$ it holds $F(x,\lambda \xi) = |\lambda|^{p} F(x,\xi)$.
\end{itemize}
Associated with a variational kernel $F$ and a measurable set $E \subset \rn$, we define the \emph{localized energies} $\mathcal{F}_{E} : W_{loc}^{1,p}(\rn) \to [0, \infty)$ as
\[\mathcal{F}_E (u) = \int_{E } F(x, \nabla u(x)) \, dx   \]   
and we abbreviate the \emph{global energy functional} by $\mathcal{F} := \mathcal{F}_{\rn}$.

\subsection{The Euler--Lagrange equation}
\label{subsec:Euler--Lagrange}

Setting $\mathcal{A}(x,\xi) := (\nabla_\xi F)(x,\xi)$, we can write down the \emph{Euler--Lagrange equation} 
\begin{equation}
\label{eq:aharmoniceq}
\div_x \mathcal{A}( x ,\nabla u (x) ) = 0  
\end{equation}
for the energy functional $\mathcal{F}$. This is an $\mathcal{A}$-harmonic equation in the sense of \cite{Heinonen2018}, see Lemma~5.9 therein. In particular, the strict convexity of $F$ implies for a.e.\ $x \in \rn$ the important inequality
\begin{align}
\label{eq:convexity inequality}
F(x,\xi_1) - F(x, \xi_2) > \mathcal{A}(x,\xi_2) \cdot (\xi_1 - \xi_2)
\end{align}
whenever $\xi_1, \xi_2 \in \rn$, $\xi_1 \neq \xi_2$, see Lemma~5.6 in \cite{Heinonen2018}. A \emph{(weak) solution} to \eqref{eq:aharmoniceq} in an open set $\Omega \subset \rn$ is a function $u \in W_{loc}^{1,p}(\Omega)$ such that
\begin{align}
\label{eq:aharmoniceq-weak}
\int_\Omega \mathcal{A}( x ,\nabla u (x) ) \cdot \nabla \varphi(x) \, dx  = 0
\end{align}
holds for all $\varphi \in C_c^\infty(\Omega)$. We speak of \emph{supersolutions (subsolutions)} if the left-hand side above is non-negative (non-positive) for all non-negative  $\varphi \in C_c^\infty(\Omega)$. More generally, the left-hand side in \eqref{eq:aharmoniceq-weak} defines $-L u := -\div_x \mathcal{A}(x, \nabla u(x))$ as an operator $W^{1,p}_{loc}(\Omega) \to \mathcal{D}'(\Omega)$ and being a weak solution means that \eqref{eq:aharmoniceq} holds in the sense of distributions.

\subsection{\texorpdfstring{$\boldsymbol{\mathcal{A}}$}{A}-subharmonic functions}
\label{subsec:A-subharmonic functions}
For the moment, fix an open set $\Omega$, relative to which the following definitions are given. An \emph{$\mathcal{A}$-harmonic function} is a continuous weak solution to \eqref{eq:aharmoniceq}. An \emph{$\mathcal{A}$-subharmonic function} is an upper semicontinuous function that is not identically $-\infty$ and satisfies the following \emph{elliptic comparison principle} with respect to $u \mapsto \div_x \mathcal{A}( x ,\nabla u(x) )$. 

\begin{definition}[Elliptic comparison principle]
\label{def:comparison}
An upper semicontinuous function $u : \Omega \to [-\infty, \infty)$ is said to satisfy the comparison principle with respect to an operator $L : W_{loc}^{1,p}(\Omega) \to \mathcal{D}'(\Omega)$ if the following holds for all $G \Subset \Omega$ and all $h \in C(\overline{\Omega})$ with $Lh = 0$ in $\mathcal{D}'(G)$:
\[\textrm{If $h(x) \geq u(x)$ for all $x \in \partial G$, then $h(x) \geq u(x)$ for all $x \in G$.}  \] 
\end{definition}

\begin{remark}
\label{rem:comparison}
If $Lu = \div_x \mathcal{A}( x ,\nabla u(x) )$, then every continuous subsolution to \eqref{eq:aharmoniceq-weak} satisfies the comparison principle with respect to $L$. This is Theorem~7.1.6 in \cite{Heinonen2018}. 
\end{remark}

$\mathcal{A}$-subharmonic functions are energy subminimizers in the following sense.

\begin{lemma}
\label{lemma:comparison_submin}
If $u \in W_{loc}^{1,p}(\rn)$ is $\mathcal{A}$-subharmonic in $\Omega$, then for all open $D \Subset \Omega$ and all non-negative $\varphi \in C_c^{\infty}(D) $,
\[ \mathcal{F}_D(u) \leq \mathcal{F}_D(u - \varphi).\]
\end{lemma}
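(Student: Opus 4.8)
The plan is to deduce the energy inequality from the weak subsolution property. By Remark~\ref{rem:comparison} together with the results of \cite{Heinonen2018}, an $\mathcal{A}$-subharmonic function in $W^{1,p}_{loc}$ is in fact a weak subsolution of \eqref{eq:aharmoniceq} in $\Omega$; that is, $\int_\Omega \mathcal{A}(x,\nabla u)\cdot\nabla\psi\,dx \leq 0$ for all non-negative $\psi \in C_c^\infty(\Omega)$. (If one prefers to stay self-contained, one can instead invoke the comparison principle directly against $\mathcal{A}$-harmonic functions obtained by solving the obstacle-free Dirichlet problem, but using the weak formulation is cleanest.) First I would fix an open $D \Subset \Omega$ and a non-negative $\varphi \in C_c^\infty(D)$, and extend the test-function inequality from $C_c^\infty$ to the closure $W^{1,p}_0(D)$ of non-negative test functions, so that $\varphi$ itself is an admissible test function; this is routine since $\mathcal{A}(x,\nabla u) \in L^{p'}_{loc}$ by the growth bound $|\mathcal{A}(x,\xi)| \lesssim |\xi|^{p-1}$ following from $0 \leq F(x,\xi) \leq \Lambda|\xi|^p$ and convexity. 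This yields
\[
\int_D \mathcal{A}(x,\nabla u(x)) \cdot \nabla \varphi(x)\,dx \leq 0.
\]

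Next I would apply the strict convexity inequality \eqref{eq:convexity inequality} pointwise with $\xi_1 = \nabla u(x) - \nabla\varphi(x)$ and $\xi_2 = \nabla u(x)$, which gives for a.e.\ $x$
\[
F(x,\nabla u(x) - \nabla\varphi(x)) - F(x,\nabla u(x)) \geq \mathcal{A}(x,\nabla u(x)) \cdot (-\nabla\varphi(x)),
\]
with the understanding that equality is allowed precisely on the set where $\nabla\varphi(x) = 0$. Integrating this over $D$ (all integrands are in $L^1(D)$, the left side by the $p$-growth of $F$ and $u \in W^{1,p}_{loc}$, the right side by Hölder) and using the displayed test-function inequality on the right-hand side, I obtain
\[
\mathcal{F}_D(u - \varphi) - \mathcal{F}_D(u) = \int_D \big(F(x,\nabla u - \nabla\varphi) - F(x,\nabla u)\big)\,dx \geq -\int_D \mathcal{A}(x,\nabla u)\cdot\nabla\varphi\,dx \geq 0,
\]
which is exactly the claimed inequality $\mathcal{F}_D(u) \leq \mathcal{F}_D(u-\varphi)$. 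Note that since $F$ vanishes outside $\nabla u$'s support considerations are not needed; $F$ is defined on all of $\rn$ and $\mathcal{F}_D$ is finite because $u \in W^{1,p}_{loc}(\rn)$ and $D$ is bounded.

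The main obstacle is the density/approximation step promoting $\varphi \in C_c^\infty(D)$ to be usable as the test function against the distributional inequality $Lu \leq 0$: a priori \eqref{eq:aharmoniceq-weak} and its subsolution variant are only assumed for smooth compactly supported test functions, and one must check that the non-negativity of the pairing $\langle -Lu, \psi\rangle$ persists for all non-negative $\psi \in W^{1,p}_0(D)$ and in particular that $\langle -Lu, \cdot\rangle$ extends continuously to this space. This is standard nonlinear potential theory — it follows from $\mathcal{A}(x,\nabla u) \in L^{p'}(D)$ (so the pairing is just $\int_D \mathcal{A}(x,\nabla u)\cdot\nabla\psi$, continuous in $\psi$ on $W^{1,p}_0(D)$) combined with the fact that non-negative elements of $W^{1,p}_0(D)$ are $W^{1,p}$-limits of non-negative elements of $C_c^\infty(D)$ — but it is the one point where care is genuinely needed. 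Everything else is a direct application of \eqref{eq:convexity inequality} and an integration.
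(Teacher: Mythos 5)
Your proof is correct and follows essentially the same route as the paper: the one nontrivial input is that a $W^{1,p}_{loc}$ $\mathcal{A}$-subharmonic function is a weak subsolution of \eqref{eq:aharmoniceq} (this is Corollary~7.21 in \cite{Heinonen2018}, not Remark~\ref{rem:comparison}, which records the converse implication), after which testing with $\varphi$ and applying \eqref{eq:convexity inequality} with $\xi_1 = \nabla(u-\varphi)$ and $\xi_2 = \nabla u$ and integrating gives the claim, exactly as in the paper. The density step you single out as the main obstacle is not needed at all, since a non-negative $\varphi \in C_c^{\infty}(D)$ with $D \Subset \Omega$ is already an admissible test function in the definition of a subsolution on $\Omega$.
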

\begin{proof}
By Corollary 7.21 of \cite{Heinonen2018} we know that $u$ is a subsolution to \eqref{eq:aharmoniceq}. Hence, for all non-negative $\varphi \in C_c^\infty(D)$ we have 
\begin{align*}
\int_D \mathcal{A}(x, \nabla u(x)) \cdot (\nabla u(x) - \nabla (u-\varphi)(x)) \, dx
=\int_D \mathcal{A}(x, \nabla u(x)) \cdot \nabla \varphi(x) \, dx \leq 0
\end{align*}
and the claim follows from \eqref{eq:convexity inequality} with $\xi_2 := \nabla  u(x)$ and $\xi_1 := \nabla(u-\varphi)(x)$.
\end{proof}

\subsection{The \texorpdfstring{$\boldsymbol{\mathcal{A}}$}{A}-parabolic equation}
\label{subsec:Aparabolic equation}
We turn to the corresponding parabolic problem
\begin{align}
\label{eq:aparaboliceq}
\dot{u}(t,x) - \div_x \mathcal{A}( x ,\nabla u (t,x) ) = 0,
\end{align}
where $\dot{u}$ denotes the derivative in $t$. Let $\Omega \subset \rn$ be an open set and let $t_1<t_2$. We call $u$ a \emph{weak solution} to \eqref{eq:aparaboliceq} in $(t_1,t_2) \times \Omega$, if $u\in~C(I; L^{2}(D)) \cap L^{p}(I;W^{1,p}(D))$ whenever $I \times D \Subset (t_1,t_2) \times \Omega$, and if
\begin{align}
\label{eq:aparabolic-weak}
\int_{t_1}^{t_2} \int_\Omega - u \dot{\varphi} + \mathcal{A}( x ,\nabla u) \cdot \nabla \varphi  \, dx \, dt  = 0
\end{align}
holds for all $\varphi \in C_c^\infty((t_1,t_2) \times \Omega)$. We speak again of \emph{supersolutions (subsolutions)} if the left-hand side above is non-negative (non-positive) for all non-negative $\varphi$. Replacing $\varphi$ by $\eta \varphi$, where $\eta \in C_c^\infty(I)$, and passing to the limit as $\eta \to 1_I$, we obtain thanks to the continuity assumption on $u$ that
\begin{align*}
\int_\Omega u \varphi \, dx \bigg|_{\partial I} + \int_I \int_D - u \dot{\varphi} + \mathcal{A}( x ,\nabla u) \cdot \nabla \varphi  \, dx \, dt  = 0.
\end{align*}
Therefore, our notion of weak solutions coincides with the one in Chapter~\rm{II} of~\cite{DiBenedetto1993a}.

Equation \eqref{eq:aparaboliceq} has the fundamental DeGiorgi property that bounded weak solutions can be redefined on a set of measure zero to become H\"older continuous. 

\begin{proposition}[Theorem 1.1 of Section {\rm III} in \cite{DiBenedetto1993a}]
	\label{prop:DeGiorgi}
	Let $u$ be a bounded weak solution to \eqref{eq:aparaboliceq} in an open cylinder $(t_1, t_2) \times \Omega$. Denote
$ U^{p/(p-2)} = \|u\|_{L^{\infty} ((t_1,t_2) \times \Omega)}$.
Then there exist $\alpha \in (0,1)$ and $C>0$ depending only on $n$ and $p$, so that whenever $t_1 < t_3<t_4< t_2$ and $D \Subset \Omega$, it holds for a.e.\ $(s,x), (t,y) \in (t_3,t_4) \times D$ that
	\[  |u(s,x) - u(t,y)|  \leq C U^{\frac{p}{p-2}} \left( \frac{|x-y| + U |s-t|^{1/p}}{ \min(\dist(D, \Omega^{c}) , U |t_3 - t_1|^{1/p}     ) } \right)^{\alpha} . \]
\end{proposition}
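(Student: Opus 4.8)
The statement is classical, due to DiBenedetto, and for the purposes of the paper one simply invokes \cite{DiBenedetto1993a}; nevertheless here is the route one would take to prove it, namely the \emph{method of intrinsic scaling}. The first ingredient is a pair of local estimates for the truncations $(u-k)_\pm$. Testing the weak formulation against $\zeta^p (u-k)_\pm$ with a space-time cut-off $\zeta$ gives the Caccioppoli inequality, bounding $\operatorname{ess\,sup}_t \int \zeta^p (u-k)_\pm^2 \,dx + \iint \zeta^p |\nabla (u-k)_\pm|^p \,dx\,dt$ by lower-order terms involving $(u-k)_\pm$; testing against a regularisation of $\zeta^p\,\partial_t[\psi^2((u-k)_\pm)]$, with $\psi$ of logarithmic type, gives the logarithmic estimate that will later be used to propagate measure-theoretic information forward in time. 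Both are insensitive to the structure of $\mathcal{A}$ beyond the $p$-growth and strict convexity already assumed.

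The core of the argument is the reduction of oscillation. Fix a cylinder and set $\omega = \operatorname*{ess\,osc} u$ on it. Because the diffusion $|\nabla u|^{p-2}$ degenerates where $\nabla u$ is small, one must work not on standard parabolic cylinders $(t_0-\rho^p,t_0)\times B_\rho$ but on the \emph{intrinsically scaled} cylinders $Q_\rho(\theta) = (t_0-\theta\rho^p,t_0)\times B_\rho$ with $\theta \simeq \omega^{2-p}$, on which the equation behaves, heuristically, like the heat equation. On such a cylinder one runs De Giorgi's two alternatives. Either $|\{u < \mu^- + \omega/2\}|$ is a small enough fraction of $|Q_\rho(\theta)|$ — and then a De Giorgi iteration fed by the Caccioppoli inequality forces $u \geq \mu^- + \omega/4$ on a smaller intrinsic cylinder — or it is not, and then the logarithmic estimate carries the measure information to a time slice where, again by a De Giorgi iteration, one forces $u \leq \mu^+ - \omega/4$. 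In either case one obtains $\operatorname*{ess\,osc}_{Q_{\sigma\rho}(\theta')} u \leq (1-\eta)\,\omega$ with $\eta,\sigma \in (0,1)$ depending only on $n$ and $p$.

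One then iterates this reduction along a geometric sequence of radii, checking that the intrinsic parameter $\theta_j \simeq \omega_j^{2-p}$ changes from step to step but stays comparable because the $\omega_j$ themselves decay geometrically; a standard iteration lemma converts this into $\operatorname*{ess\,osc}_{Q_r(\theta)} u \leq C\,\omega\,(r/\rho)^\alpha$ with $\alpha \in (0,1)$ depending only on $n,p$. Unwinding the scaling is what produces the exact form of the bound: a point at spatial distance $|x-y|$ and time distance $|s-t|$ from the vertex sits at intrinsic radius $\simeq |x-y| + \theta^{-1/p}|s-t|^{1/p}$, and since $\theta^{-1/p} \simeq \omega^{(p-2)/p} \simeq U$ (using $\omega \leq 2U^{p/(p-2)}$) this is $\simeq |x-y| + U|s-t|^{1/p}$; the largest admissible intrinsic radius is $\simeq \min(\dist(D,\Omega^c),\,U|t_3-t_1|^{1/p})$, one factor from staying inside $\Omega$ spatially and the other from the constraint $\theta\rho^p \lesssim |t_3-t_1|$ backward in time; and the prefactor $C\,\omega \simeq C\,U^{p/(p-2)}$. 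This is exactly the claimed modulus.

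The main obstacle — the reason this is genuinely harder than the Nash–Moser theory for uniformly parabolic equations — is the degeneracy of the equation where $\nabla u$ is small. It forces the time-rescaling by the \emph{a priori unknown}, solution-dependent factor $\omega^{2-p}$, and it is why a single Moser-type iteration does not close and one needs the good/bad dichotomy above instead. The quantitative point requiring care is tracking this rescaling through every energy and logarithmic estimate so that all constants end up depending only on $n$ and $p$, and so that the size of the solution enters the final bound only through the explicit powers of $U$ recorded in the statement.
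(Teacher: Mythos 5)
The paper does not prove this proposition at all --- it is quoted directly from DiBenedetto's monograph with the citation as its only justification --- so your decision to simply invoke \cite{DiBenedetto1993a} matches the paper's treatment exactly. Your accompanying sketch (Caccioppoli and logarithmic estimates for the truncations $(u-k)_\pm$, the two De Giorgi alternatives on intrinsically scaled cylinders with $\theta \simeq \omega^{2-p}$, geometric iteration of the oscillation reduction, and the bookkeeping that turns intrinsic radii into the stated modulus with the powers of $U$ and the minimum of $\dist(D,\Omega^c)$ and $U|t_3-t_1|^{1/p}$) is a faithful outline of the argument in the cited source, so nothing further is needed.
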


By a slight abuse of notation, we shall from now on identify bounded weak solutions to \eqref{eq:aparaboliceq} with their (H\"older) continuous representative. Then the bound in the proposition above holds for all $(s,x), (t,y) \in I \times D$.

Finally, we recall the parabolic comparison principle that holds in full generality for the $\mathcal{A}$-parabolic equations considered here.

\begin{lemma}[Lemma 3.5 in \cite{Korte2010}]
	\label{lemma:p-parab_comparison}
	Suppose that $u$ is a weak supersolution and $v$ is a weak subsolution to \eqref{eq:aparaboliceq} in a cylinder $(t_1,t_2) \times \Omega$ , where $\Omega \subset \rn$ is an open set. If $u$ and $-v$ are lower semicontinuous on $(t_1,t_2) \times \Omega$ and $v \leq u$ on
	the parabolic boundary 
	\[ (\{t_1\} \times  \overline{\Omega} )\cup ([t_1,t_2] \times \partial \Omega),\] 
	then $v \leq u$ almost everywhere in $(t_1,t_2) \times \Omega$.
\end{lemma}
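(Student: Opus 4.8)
The plan is to run the classical energy/Steklov-averaging comparison argument, exploiting that $\mathcal{A}=\nabla_\xi F$ is monotone. Fix $\epsilon>0$ and consider
\[ w := (v-u-\epsilon)_+, \]
defined on the open cylinder $(t_1,t_2)\times\Omega$; the goal is to show $w\equiv 0$, and then letting $\epsilon\downarrow 0$ gives $v\le u$ a.e. Two structural facts will be used repeatedly. First, subtracting the weak inequality \eqref{eq:aparabolic-weak} for the supersolution $u$ from the one for the subsolution $v$ yields, for every non-negative test function $\varphi$,
\[ \int_{t_1}^{t_2}\!\!\int_\Omega -(v-u)\dot\varphi + \bigl(\mathcal{A}(x,\nabla v)-\mathcal{A}(x,\nabla u)\bigr)\cdot\nabla\varphi \, dx\,dt \le 0. \]
Second, adding \eqref{eq:convexity inequality} to its version with $\xi_1$ and $\xi_2$ interchanged gives the monotonicity inequality $\bigl(\mathcal{A}(x,\xi_1)-\mathcal{A}(x,\xi_2)\bigr)\cdot(\xi_1-\xi_2)\ge 0$ for all $\xi_1,\xi_2\in\rn$ and a.e.\ $x$.

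Next I would turn the semicontinuity hypotheses into the statement that $w$ is an admissible test function that vanishes on the parabolic boundary. Since $-v$ and $u$ are lower semicontinuous, $v-u$ is upper semicontinuous on the open cylinder, so $\{v-u\ge\epsilon\}$ is relatively closed; combined with $v\le u$ on $[t_1,t_2]\times\partial\Omega$ this forces, for each time $t$ bounded away from $t_1$, the slice $\{x\in\Omega: w(t,x)>0\}$ to be a compact subset of $\Omega$, whence $w(t,\cdot)\in W^{1,p}_0(\Omega)$ for a.e.\ such $t$, with $\nabla w(t,\cdot)=\mathbf{1}_{\{v-u>\epsilon\}}(\nabla v-\nabla u)(t,\cdot)$. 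The condition on $\{t_1\}\times\overline\Omega$ gives $w(t_1,\cdot)=0$, and since $v-u\in C(I;L^2_{loc})$ the map $t\mapsto w(t,\cdot)$ is continuous into $L^2$ near $t_1$ with vanishing value there.

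Now for the core estimate. Because weak solutions have no a priori time derivative, I would first pass to the Steklov-averaged versions of the two weak inequalities as in Chapter~II of \cite{DiBenedetto1993a}, test their difference against $(v_h-u_h-\epsilon)_+$ over a time interval $(t_1+\delta,\tau)$ with $\tau\in(t_1,t_2)$, and then send $h\to 0$ and $\delta\to 0$, using the boundary-term form of the weak formulation derived in Subsection~\ref{subsec:Aparabolic equation}. On $\{w>0\}$ one has $v-u=w+\epsilon$, so the time term contributes $\tfrac12\int_\Omega w(\tau,x)^2\,dx$ up to a term at $t=t_1+\delta$ that vanishes in the limit by the previous paragraph; the lateral boundary term vanishes because $w(t,\cdot)\in W^{1,p}_0(\Omega)$; and the spatial term equals $\int_{t_1}^{\tau}\!\!\int_{\{v-u>\epsilon\}}\bigl(\mathcal{A}(x,\nabla v)-\mathcal{A}(x,\nabla u)\bigr)\cdot(\nabla v-\nabla u)\,dx\,dt$, which is $\ge 0$ by monotonicity. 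Thus
\[ \tfrac12\int_\Omega w(\tau,x)^2\,dx + \int_{t_1}^{\tau}\!\!\int_{\{v-u>\epsilon\}}\bigl(\mathcal{A}(x,\nabla v)-\mathcal{A}(x,\nabla u)\bigr)\cdot(\nabla v-\nabla u)\,dx\,dt \le 0. \]
Both summands are non-negative, hence both vanish; in particular $w(\tau,\cdot)=0$ in $L^2(\Omega)$ for every $\tau\in(t_1,t_2)$, i.e.\ $v\le u+\epsilon$ a.e., and $\epsilon\downarrow 0$ finishes the proof.

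The monotonicity/energy bookkeeping in the last two displays is routine. The delicate step — and the reason the statement is not immediate — is the second paragraph: making precise the meaning of "$v\le u$ on the parabolic boundary" for functions defined only a.e.\ on the open cylinder, and converting the semicontinuity hypotheses into the honest facts that $w(t_1,\cdot)=0$ and that $w(t,\cdot)\in W^{1,p}_0(\Omega)$ for $t$ away from $t_1$, so that no uncontrolled boundary term survives the integration by parts. The Steklov averaging is a secondary technical point, needed purely to cope with the low time-regularity of weak solutions; it does not interact with the comparison structure and can be quoted from \cite{DiBenedetto1993a}.
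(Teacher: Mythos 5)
The paper itself offers no proof of this lemma --- it is quoted verbatim from Lemma~3.5 of the cited reference --- so there is nothing internal to compare with; your Steklov-averaging/monotonicity scheme is indeed the standard route, and the core energy computation (subtracting the two inequalities from \eqref{eq:aparabolic-weak}, testing with $(v_h-u_h-\epsilon)_+$, obtaining $\tfrac12\int_\Omega w(\tau)^2\,dx$ plus a non-negative monotonicity term) is sound. The genuine gap sits exactly at the step you yourself flag as delicate, and your treatment of it does not go through as written. First, upper semicontinuity of $v-u$ \emph{on the open cylinder} only makes $\{v-u\ge\epsilon\}$ relatively closed in $(t_1,t_2)\times\Omega$; that does not prevent it from accumulating at the lateral boundary, so it does not ``force'' the slices $\{w(t,\cdot)>0\}$ to be compactly contained in $\Omega$. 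For that you must interpret the hypothesis ``$v\le u$ on the parabolic boundary'' in the limit sense ($\limsup v\le\liminf u$ at each parabolic boundary point, equivalently semicontinuity up to the closure) and use compactness of the parabolic boundary, i.e.\ boundedness of $\Omega$; as literally stated, with $\Omega$ an arbitrary open set (even $\Omega=\rn$, where the lateral boundary is empty), no covering argument is available and the integrals in your energy identity need not even converge, since sub/supersolutions lie only in the \emph{local} energy class $C(I;L^2(D))\cap L^p(I;W^{1,p}(D))$ for $I\times D\Subset(t_1,t_2)\times\Omega$. Second, your disposal of the term at $t=t_1+\delta$ is unjustified: that same definition gives $L^2$-continuity of $t\mapsto w(t,\cdot)$ only on compact subintervals of the \emph{open} interval $(t_1,t_2)$, so there is no continuity up to $t_1$, and ``$w(t_1,\cdot)=0$'' has no a.e.\ meaning for functions defined on the open cylinder.

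Both defects are cured by the standard compactness argument that should replace your second paragraph: fix $\epsilon>0$, read the boundary hypothesis pointwise in the $\limsup/\liminf$ sense at every point of the (compact, for bounded $\Omega$) parabolic boundary, and extract a finite cover producing $\sigma>0$ and an open $U\supset\partial\Omega$ with $v<u+\epsilon$ on $\bigl((t_1,t_1+\sigma)\times\Omega\bigr)\cup\bigl((t_1,t_2)\times(U\cap\Omega)\bigr)$. Then $w=(v-u-\epsilon)_+$ vanishes identically in a collar of the parabolic boundary, so for $\delta<\sigma$ the term at $t_1+\delta$ is exactly zero (no continuity up to $t_1$ is needed), $w(t,\cdot)$ is compactly supported in $\Omega$ and is an admissible test function after Steklov averaging and a spatial cut-off, and your computation closes, yielding $v\le u+\epsilon$ a.e.\ and then $\epsilon\downarrow0$. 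With this repair --- and with $\Omega$ bounded, which is all the paper ever uses, since the lemma is invoked only on sets $G\Subset E$ via Lemma~\ref{lemma:abstract_comparison} --- your proof is essentially the argument of the cited reference.
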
 

\subsection{Two Concrete Energies}
\label{subsec:concrete kernels}
We apply the above results from nonlinear potential theory to two different energy quantities.
\begin{itemize}
\item The \emph{$p$-energy} with variational kernel
\[F(x,\xi) = \frac{1}{p} |\xi|^{p}, \]
where $p > 2$. Then $\nabla_\xi F(x,\xi) = |\xi|^{p-2}\xi$ and the corresponding $\mathcal{A}$-harmonic equation is then the \emph{$p$-Laplace equation} $\Delta_p u := \div(|\nabla u|^{p-2} \nabla u) = 0$.
\item The \emph{(quadratic) $A$-energy} defined as follows. Let $A: \mathbb{R}^{n} \to \mathbb{R}^{n \times n}$ be measurable in all entries and let $A(x)$ be symmetric for all $x \in \rn$. Suppose uniform boundedness and ellipticity
\[\Lambda^{-1} |\xi|^{2} \leq A(x) \xi \cdot \xi \quad \text{and} \quad |A(x) \xi| \leq \Lambda |\xi|\]
for all $x, \xi \in \mathbb{R}^{n}$ and set
\[F(x,\xi) = \frac{1}{2} A(x) \xi \cdot \xi. \]
Then the $\mathcal{A}$-harmonic equation is  linear and reads $Lu :=\div( A \nabla u) = 0$.
\end{itemize}

\section{The gradient flow of $p$-energy}
\label{sec:p_grad_flow}

\noindent In this section we extend a function $f \in L^{2}(\mathbb{R}^{n}) \cap \dot{W}^{1,p}(\mathbb{R}^{n})$ to the upper half space along the gradient flow of the $p$-energy functional. 
Our notation here means that $f \in L^2(\rn)$ with $\nabla f \in L^p(\rn)^n$. This is a non-linear analogue of the heat extension. It amounts to showing that the Cauchy problem 
\begin{eqnarray}
\label{eq:Cauchy-p}
\begin{split}
\dot u - \Delta_p u &= 0& \qquad &\text{in } (0,\infty) \times \rn \\
u|_{t=0} &= f& \qquad &\text{in } \rn
\end{split}
\end{eqnarray}
has a unique solution.

We begin by introducing a natural class of energy solutions. Throughout, we tacitly identify $W^{1,2}(0,T; L^2(\rn))$ with a subspace of $C([0,T]; L^2(\rn))$ via the one-dimensional Sobolev embedding whenever convenient.

\begin{definition}[Energy solution]
\label{def:energy-solution}
	Let $f \in L^{2}(\mathbb{R}^{n}) \cap \dot{W}^{1,p}(\mathbb{R}^{n})$.
	A measurable function $u$ is an energy solution to the Cauchy problem \eqref{eq:Cauchy-p} if it
	\begin{enumerate}
		\item[\quad(i)] belongs to the function space
		\[W^{1,2}( 0,T ; L^{2}(\mathbb{R}^{n})) \cap L^{\infty}(0,T; \dot{W}^{1,p}(\mathbb{R}^{n}))  \]
		for every $T>0$;
		\item[\quad(ii)] satisfies for almost every $t>0$ the equation
		\[ \int_{\rn} \dot{u}(t,x) \varphi(x) +  |\nabla u(t,x)|^{p-2} \nabla u(t,x) \cdot \nabla \varphi (x) \, dx  = 0  \]
		for all $\varphi \in C_0^\infty(\rn)$;
		\item[\quad(iii)] obtains the initial value in the $L^{2}$ sense 
		\[ \lim_{t \to 0} \int_{\mathbb{R}^{n}} | u(t,x) - f(x) |^2 \, dx = 0 .\]
	\end{enumerate}
\end{definition}

Poincar\'e's inequality implies $L^2(\rn) \cap \dot{W}^{1,p}(\rn) \subseteq W^{1,p}_{loc}(\rn)$. Consequently, (ii)  means
\[\dot{u}(t) - \Delta_p u (t) = 0 \qquad (\text{a.e. } t>0),\] 
where  $\Delta_p: W^{1,p}_{loc}(\rn) \to \mathcal{D}'(\rn)$ is the
weak $p$-Laplace operator as in Section~\ref{subsec:Euler--Lagrange}. In particular, every energy solution is a weak solution to the $p$-parabolic equation in the sense of Section~\ref{subsec:Aparabolic equation}. 

\begin{proposition}
\label{prop:existence-energy-solutions}
	Let $f \in L^{2}(\mathbb{R}^{n}) \cap \dot{W}^{1,p}(\mathbb{R}^{n})$. There exists a unique energy solution $u$ to the Cauchy problem \eqref{eq:Cauchy-p}. It satisfies for every $t> 0$ the energy estimates
	\begin{align}
	\label{energyeq.eq}
	\|u(t, \cdot) \|_{L^2(\rn)}^{2} + 2 \int_0^t \|\nabla u(s,\cdot)\|_{L^p(\rn)}^p \, ds 
	\le \|f\|_{L^2(\rn)}^2
	\end{align}
	and
	\begin{align}
	\label{energyeq.eq2}
	\int_0^t\|\dot{u} ( s, \cdot) \|_{L^2(\rn)}^{2} \, ds + \frac{1}{p} \|\nabla u(t,\cdot)\|_{L^p(\rn)}^p
	\le \frac{1}{p} \|\nabla f\|_{L^p(\rn)}^p.
	\end{align}	
	Moreover, $u\geq 0$ a.e.\ in $(0,\infty) \times \rn$ provided $f \geq 0$ a.e.\ in $\rn$.
\end{proposition}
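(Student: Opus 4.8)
The plan is to recognise the Cauchy problem \eqref{eq:Cauchy-p} as the gradient flow, in the Hilbert space $L^{2}(\rn)$, of the convex $p$-energy, and then to invoke the classical theory of Br\'ezis. First I would introduce the functional $\mathcal{J}\colon L^{2}(\rn)\to[0,\infty]$ defined by $\mathcal{J}(u)=\tfrac1p\int_{\rn}|\nabla u|^{p}\,dx$ for $u\in L^{2}(\rn)\cap\dot{W}^{1,p}(\rn)$ and $\mathcal{J}(u)=+\infty$ otherwise, and check that $\mathcal{J}$ is proper (it is finite at $f$), convex (the integrand is convex in $\nabla u$), and lower semicontinuous on $L^{2}(\rn)$: if $u_{k}\to u$ in $L^{2}(\rn)$ with $\sup_{k}\mathcal{J}(u_{k})<\infty$, then $\nabla u_{k}\rightharpoonup\nabla u$ in $L^{p}(\rn)^{n}$ and weak lower semicontinuity of the $L^{p}$-norm applies. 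Hence $\partial\mathcal{J}$ is a maximal monotone operator on $L^{2}(\rn)$.

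The crucial preliminary step is to identify $\partial\mathcal{J}$ with the weak $p$-Laplacian: I would show that $\partial\mathcal{J}$ is single-valued with $\partial\mathcal{J}(u)=\{-\Delta_{p}u\}$ on the domain $D(\partial\mathcal{J})=\{u\in L^{2}(\rn)\cap\dot{W}^{1,p}(\rn):\Delta_{p}u\in L^{2}(\rn)\}$. For one inclusion, inserting $w=u+s\varphi$ with $\varphi\in C_{c}^{\infty}(\rn)$ into the subgradient inequality and letting $s\to 0$, using that $s\mapsto\mathcal{J}(u+s\varphi)$ is differentiable at $0$ with derivative $\int_{\rn}|\nabla u|^{p-2}\nabla u\cdot\nabla\varphi\,dx$, forces any $v\in\partial\mathcal{J}(u)$ to satisfy $v=-\Delta_{p}u$ in $\mathcal{D}'(\rn)$, hence $\Delta_{p}u\in L^{2}(\rn)$. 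For the reverse, one integrates the pointwise convexity inequality \eqref{eq:convexity inequality} for the kernel $F(x,\xi)=\tfrac1p|\xi|^{p}$ and approximates an arbitrary competitor $w-u\in L^{2}(\rn)\cap\dot{W}^{1,p}(\rn)$ by functions from $C_{c}^{\infty}(\rn)$ in the norm $v\mapsto\|\nabla v\|_{L^{p}(\rn)}+\|v\|_{L^{2}(\rn)}$; this is legitimate because $L^{2}(\rn)\cap\dot{W}^{1,p}(\rn)\subseteq W^{1,p}_{loc}(\rn)\cap L^{p}(\rn)$ by Poincar\'e's and Sobolev's inequalities, so truncation against dilated cutoffs and mollification converge in the required norm, and it yields $\mathcal{J}(w)-\mathcal{J}(u)\ge\langle-\Delta_{p}u,w-u\rangle$ for every $w\in D(\mathcal{J})$, i.e.\ $-\Delta_{p}u\in\partial\mathcal{J}(u)$.

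With $\partial\mathcal{J}=-\Delta_{p}$ in hand, since $f\in D(\mathcal{J})$ (a fortiori $f\in\overline{D(\partial\mathcal{J})}$) the Br\'ezis theorem on gradient flows of convex functionals (see \cite{Brezis}) produces a unique $u\in C([0,\infty);L^{2}(\rn))$ with $u(0)=f$, with $u(t)\in D(\partial\mathcal{J})$ and $\dot{u}(t)=\Delta_{p}u(t)$ for a.e.\ $t>0$, with $t\mapsto\mathcal{J}(u(t))$ nonincreasing, and with the energy identity $\tfrac{d}{dt}\mathcal{J}(u(t))=\langle\Delta_{p}u(t),\dot{u}(t)\rangle_{L^{2}(\rn)}=-\|\dot{u}(t)\|_{L^{2}(\rn)}^{2}$ for a.e.\ $t$; in particular $\dot{u}\in L^{2}(0,\infty;L^{2}(\rn))$. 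The bound $\|\nabla u(t)\|_{L^{p}(\rn)}\le\|\nabla f\|_{L^{p}(\rn)}$, the membership $\dot{u}\in L^{2}(0,T;L^{2}(\rn))$, and $u\in C([0,\infty);L^{2}(\rn))$ with $u(0)=f$ verify (i)--(iii) of Definition~\ref{def:energy-solution}, so $u$ is an energy solution. Integrating the energy identity over $(0,t)$ gives \eqref{energyeq.eq2}; testing (ii) with approximations $\varphi_{k}\to u(t)$ as above gives $\langle\dot{u}(t),u(t)\rangle_{L^{2}(\rn)}+\|\nabla u(t)\|_{L^{p}(\rn)}^{p}=0$ for a.e.\ $t$, which integrates, using absolute continuity of $t\mapsto\|u(t)\|_{L^{2}(\rn)}^{2}$, to \eqref{energyeq.eq}.

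It remains to settle uniqueness within the (a priori larger) class of energy solutions and positivity, both by elementary energy computations that again use the truncation procedure above. If $u_{1},u_{2}$ are energy solutions with the same data, testing the difference of their weak formulations with approximations of $w(t):=u_{1}(t)-u_{2}(t)$ yields $\tfrac{d}{dt}\tfrac12\|w(t)\|_{L^{2}(\rn)}^{2}=-\int_{\rn}\bigl(|\nabla u_{1}(t)|^{p-2}\nabla u_{1}(t)-|\nabla u_{2}(t)|^{p-2}\nabla u_{2}(t)\bigr)\cdot\nabla w(t)\,dx\le 0$ by monotonicity of $\xi\mapsto|\xi|^{p-2}\xi$, so $w\equiv 0$ because $w(0)=0$. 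For positivity, assuming $f\ge 0$, I would test (ii) with approximations of the admissible competitor $-u^{-}(t)=\min(u(t),0)$; since $\nabla(-u^{-}(t))=\nabla u(t)\,\mathbf 1_{\{u(t)<0\}}$ and $\tfrac{d}{dt}\tfrac12\|u^{-}(t)\|_{L^{2}(\rn)}^{2}=-\langle u^{-}(t),\dot{u}(t)\rangle_{L^{2}(\rn)}$, this produces $\tfrac{d}{dt}\tfrac12\|u^{-}(t)\|_{L^{2}(\rn)}^{2}+\int_{\{u(t)<0\}}|\nabla u(t)|^{p}\,dx=0$, whence $\|u^{-}(t)\|_{L^{2}(\rn)}$ is nonincreasing and, as $u^{-}(0)=f^{-}=0$, $u\ge 0$ a.e. I expect the main obstacle to be organisational rather than conceptual: reconciling the abstract Br\'ezis solution with the concrete Definition~\ref{def:energy-solution} --- in particular deducing $u\in L^{\infty}(0,T;\dot{W}^{1,p}(\rn))$ from the energy dissipation $\mathcal{J}(u(t))\le\mathcal{J}(f)$ --- and, uniformly across all these computations, justifying the passage from test functions in $C_{c}^{\infty}(\rn)$ to competitors lying only in $L^{2}(\rn)\cap\dot{W}^{1,p}(\rn)$. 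Once the embedding and truncation lemmas are secured, the remainder is the textbook gradient-flow scheme, and the restriction $p>2$ plays no essential role at this stage.
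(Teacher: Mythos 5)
Your plan is correct, but it follows a genuinely different route from the paper. You recast \eqref{eq:Cauchy-p} as the $L^2(\rn)$-gradient flow of the proper, convex, lower semicontinuous functional $\mathcal{J}$, identify $\partial\mathcal{J}$ with $-\Delta_p$ on $\{u \in L^2(\rn)\cap\dot W^{1,p}(\rn): \Delta_p u \in L^2(\rn)\}$, and then quote Br\'ezis' theory of semigroups generated by subdifferentials to get existence, the energy identity $\tfrac{d}{dt}\mathcal{J}(u(t))=-\|\dot u(t)\|_{L^2}^2$, and hence \eqref{energyeq.eq}--\eqref{energyeq.eq2}; uniqueness in the energy class and positivity are then handled exactly as in the paper (monotonicity of $\xi\mapsto|\xi|^{p-2}\xi$ and testing with $u_-$). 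The paper instead gives a self-contained Galerkin construction in $V=L^2(\rn)\cap\dot W^{1,p}(\rn)$: finite-dimensional ODEs via Peano, uniform energy bounds, weak compactness, and the Minty--Browder monotonicity trick to identify the weak limit of $|\nabla u_k|^{p-2}\nabla u_k$ -- precisely because, as the authors note, the abstract result is ``folklore'' but hard to cite in the exact form needed (unbounded domain, this solution class). What your route buys is brevity and stronger output (the Br\'ezis solution satisfies $\dot u(t)=\Delta_p u(t)$ in $L^2(\rn)$ for a.e.\ $t$, and the energy inequalities come as identities); what it costs is that the burden shifts to the subdifferential identification and to matching the abstract solution with Definition~\ref{def:energy-solution}, and both your integration-by-parts steps and the uniqueness/positivity tests still hinge on the density of $C_c^\infty(\rn)$ in $V$, i.e.\ on the paper's Lemma~\ref{lem:properties-V}. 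One small correction: that density lemma (the cutoff estimate $k^{n/p-n/2-1}$) does use $p>2$, so your closing remark that $p>2$ plays no essential role at this stage is not quite accurate, though it does not affect the validity of the argument here since $p>2$ is assumed throughout.
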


Proposition~\ref{prop:existence-energy-solutions} is folklore but it cannot be read easily from the current literature.  The nature of the time derivative is one of the main concerns here. Typically, the existence class is too large because of data unnecessarily general for our purposes, or the spatial domain is bounded (see e.g.\ \cite{ Boegelein2014, Chill2010, DiBenedetto1989, Fontes2009c,Lions, Wieser1987}). Therefore we give a self-contained proof at the end of the section.

With Proposition~\ref{prop:existence-energy-solutions} at hand, we can use \emph{a priori} estimates for weak solutions to infer further regularity of $u$.

\begin{proposition}[Theorem~2.1 in \cite{Benedetto-Friedman84}]
	\label{prop:Lipschitz} 
	Let $u$ be a weak solution to $\dot{u} - \Delta_p u = 0$ in an open cylinder $(t_1,t_2) \times \Omega$. Then $\nabla u \in L^\infty_{loc}(
	(t_1,t_2) \times \Omega)$.
\end{proposition}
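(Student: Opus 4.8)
The statement to prove is Proposition~\ref{prop:Lipschitz}: any weak solution $u$ to $\dot u - \Delta_p u = 0$ in an open cylinder $(t_1,t_2)\times\Omega$ has $\nabla u \in L^\infty_{loc}$. Since the paper explicitly cites Theorem~2.1 of \cite{Benedetto-Friedman84}, the ``proof'' here is really a matter of recording how the cited result applies, together with the standard reductions that make a \emph{local, qualitative} statement follow from the quantitative interior gradient bounds proved in the literature. Let me sketch that.

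\textbf{Plan.} The plan is to reduce the assertion to an a priori interior Lipschitz estimate for \emph{bounded} smooth-coefficient approximations and then remove the auxiliary hypotheses by localization and approximation. First I would fix a subcylinder $Q' = (s_1,s_2)\times D$ with $Q' \Subset Q := (t_1,t_2)\times\Omega$, choosing an intermediate cylinder $Q' \Subset Q'' \Subset Q$. By Proposition~\ref{prop:DeGiorgi} (DeGiorgi--Nash--Moser, applicable since $u$ is a weak solution and hence locally bounded once we know $u\in L^\infty_{loc}$; more precisely one first upgrades $u\in L^p_{loc}(W^{1,p})\cap C(L^2)$ to $u\in L^\infty_{loc}$ via the standard parabolic De Giorgi iteration), the solution $u$ is H\"older continuous and in particular bounded on $\overline{Q''}$; denote $M := \|u\|_{L^\infty(Q'')}$. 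Thus on $Q''$ we are dealing with a bounded weak solution of the degenerate $p$-parabolic equation, which is exactly the setting of \cite{Benedetto-Friedman84}.

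\textbf{Key steps.} (1) Regularize: mollify the equation in space, or equivalently consider the non-degenerate approximations $\dot u_\varepsilon - \div\big((\varepsilon^2+|\nabla u_\varepsilon|^2)^{(p-2)/2}\nabla u_\varepsilon\big)=0$ on $Q''$ with $u_\varepsilon = u$ on the parabolic boundary of $Q''$. These approximate problems have smooth solutions by classical quasilinear parabolic theory, and $u_\varepsilon \to u$ in $L^p_{loc}(W^{1,p})$ and locally uniformly by the comparison principle (Lemma~\ref{lemma:p-parab_comparison}) together with the energy estimates — here the boundedness $\|u_\varepsilon\|_\infty \le M$ is preserved uniformly in $\varepsilon$. (2) Apply the interior gradient estimate of \cite[Theorem~2.1]{Benedetto-Friedman84}, which gives, for the smooth solutions $u_\varepsilon$ on $Q''$, a bound
\[
\sup_{Q'} |\nabla u_\varepsilon| \le C\big(n,p,\operatorname{dist}(Q',\partial Q''),M\big)
\]
that is \emph{independent of $\varepsilon$} — this is the crux of the cited theorem, whose proof uses a De Giorgi--type iteration on the level sets of $|\nabla u_\varepsilon|$ adapted to the intrinsic parabolic cylinders of the degenerate equation. (3) Pass to the limit: the uniform bound survives weak-$*$ convergence in $L^\infty(Q')$, so $\nabla u \in L^\infty(Q')$ with the same constant. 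Since $Q'\Subset Q$ was arbitrary, $\nabla u \in L^\infty_{loc}(Q)$.

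\textbf{Main obstacle.} The genuinely hard part is Step~(2) — the \emph{$\varepsilon$-uniform} intrinsic-scaling gradient estimate for the degenerate equation — but this is precisely the content of the cited \cite{Benedetto-Friedman84} and is not reproved here; for our purposes it is a black box. The only real work on the paper's side is the reduction: verifying that the approximating sequence $u_\varepsilon$ converges to $u$ (using uniqueness of bounded weak solutions via Lemma~\ref{lemma:p-parab_comparison}, which forces the limit of any convergent subsequence to coincide with $u$) and that the approximation preserves the $L^\infty$ bound uniformly, so that the constant in the gradient estimate does not blow up. A minor technical point is that \cite{Benedetto-Friedman84} is typically stated for solutions on standard cylinders with a definite $L^\infty$ bound, so one must first carry out the De Giorgi boundedness step to enter that framework; this is routine given Proposition~\ref{prop:DeGiorgi}. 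Once these reductions are in place the conclusion is immediate.
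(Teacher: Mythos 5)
The paper gives no proof of this proposition at all; it is taken verbatim as Theorem~2.1 of \cite{Benedetto-Friedman84}, and you correctly identify that the ``proof'' is simply an appeal to that reference. Your supplementary sketch of how the $\varepsilon$-uniform interior Lipschitz bound for bounded weak solutions yields the qualitative local statement (local boundedness by De Giorgi iteration, regularization of the degenerate equation, passage to the limit) is a reasonable account of what is hidden inside the citation, though the paper itself does not reproduce any of it.
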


In fact, $\nabla u$ is even locally H\"older continuous~\cite{Benedetto-Friedman85}, but we do not need this more involved result.

\begin{proposition}
\label{prop:dibenedetto_regularity}
Let $f \in L^{2}(\mathbb{R}^{n}) \cap \dot{W}^{1,p}(\mathbb{R}^{n})$ be non-negative and let $u$ be the energy solution to \eqref{eq:Cauchy-p}. Then $u \in L^\infty((\eps,\infty) \times \rn)$ for every $\eps>0$.
\end{proposition}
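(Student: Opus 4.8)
The plan is to prove the $L^\infty$ bound via a combination of the energy estimates from Proposition~\ref{prop:existence-energy-solutions}, a Moser/De Giorgi-type $L^2$-to-$L^\infty$ iteration, and the regularizing effect of the $p$-parabolic flow. The key structural input is that for $p > 2$ the $p$-parabolic equation is \emph{supercritical} in the sense that energy solutions instantaneously regularize, and the quantitative version of this is an $L^1$--$L^\infty$ (or $L^2$--$L^\infty$) smoothing estimate. The cleanest route: first establish a local $L^\infty$ bound on any compact cylinder away from $t = 0$ in terms of the local $L^p_t L^p_x$ norm of $\nabla u$ (which is controlled globally by \eqref{energyeq.eq}), and then upgrade local to global uniformity in $x$ using translation invariance of the equation in $\rn$.

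First I would recall that by Proposition~\ref{prop:existence-energy-solutions} we have $u \in L^\infty(0,\infty; \dot W^{1,p}(\rn))$ with $\|\nabla u(t)\|_{L^p(\rn)}^p \le \|\nabla f\|_{L^p(\rn)}^p$ for all $t$, and $u \in C([0,T]; L^2(\rn))$ with $\|u(t)\|_{L^2(\rn)} \le \|f\|_{L^2(\rn)}$. Also $u \ge 0$. Then I would run the standard local energy estimate for subsolutions of \eqref{eq:aparaboliceq}: testing the equation with $\eta^p (u-k)_+$ for a cutoff $\eta$ supported in a cylinder $Q_r = (t_0 - r^p, t_0) \times B_r(x_0)$ and levels $k \ge 0$, one obtains a Caccioppoli inequality
\[
\sup_{t} \int_{B_r} \eta^p (u-k)_+^2 \, dx + \iint_{Q_r} \eta^p |\nabla (u-k)_+|^p \, dx \, dt \le C \iint_{Q_r} (|\nabla \eta|^p + \eta^{p-1}|\dot\eta|\,\text{stuff}) (u-k)_+^{\text{appropriate powers}} \, dx \, dt,
\]
and from here De Giorgi iteration over shrinking cylinders and increasing levels yields
\[
\|u\|_{L^\infty(Q_{r/2})} \le C \Big( r^{-\theta} \|u\|_{L^p(Q_r)} + 1 \Big)
\]
for exponents $\theta, C$ depending only on $n, p$ (this is exactly the ingredient behind Proposition~\ref{prop:DeGiorgi}, and one could even try to quote a quantitative form of it rather than redo the iteration). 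Since $\|u\|_{L^p(Q_r)}$ over a fixed-size cylinder with $t_0 \ge \eps$ is bounded: $u(t) \in \dot W^{1,p} \cap L^2 \subset L^q_{loc}$ for suitable $q$ by Sobolev/Gagliardo–Nirenberg, and these local norms are bounded uniformly in $t \ge \eps$ and in the center $x_0$ by the global estimates \eqref{energyeq.eq}–\eqref{energyeq.eq2}, I get a bound on $\|u\|_{L^\infty(Q_{r/2}(t_0,x_0))}$ that is \emph{independent of $x_0$} and of $t_0 \ge 2\eps$, say. Covering $(\eps, \infty) \times \rn$ by such cylinders gives $u \in L^\infty((\eps,\infty)\times\rn)$.

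The main obstacle is the interface between "local" and "global" and making the local-$L^p$-to-local-$L^\infty$ constant genuinely translation-uniform. Two technical points need care: (a) $u(t)$ lies in $\dot W^{1,p} \cap L^2$ but \emph{not} necessarily in $W^{1,p}(\rn)$, so the local $L^q$ bound on a ball $B_r(x_0)$ with constant independent of $x_0$ has to be extracted carefully — one uses that $\|\nabla u(t)\|_{L^p(\rn)}$ is globally finite, hence $\|\nabla u(t)\|_{L^p(B_r(x_0))} \to 0$ as $|x_0| \to \infty$ in some averaged sense, and combined with $\|u(t)\|_{L^2(B_r(x_0))} \le \|f\|_{L^2}$ one applies the Sobolev–Poincaré inequality $\|u - \bar u_{B_r}\|_{L^{p^*}(B_r)} \le C \|\nabla u\|_{L^p(B_r)}$ plus control of the average $|\bar u_{B_r}| \le C r^{-n/2}\|u\|_{L^2(B_r)}$; this does give a uniform-in-$x_0$ bound for each fixed $t$, and one needs it uniformly in $t \ge \eps$, which follows from the $t$-uniform global energy bounds. (b) The De Giorgi iteration for $p > 2$ needs the correct scaling of cylinders (intrinsic scaling is not needed here because we are in the non-degenerate-in-time regime of an a priori $L^\infty$-free estimate — actually the estimate $\|u\|_{L^\infty(Q_{r/2})} \lesssim \|u\|_{L^p(Q_r)} + 1$ is standard for subsolutions without intrinsic scaling, cf. DiBenedetto, Chapter V), so this is routine but must be cited or sketched correctly. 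An alternative, perhaps cleaner, packaging is to invoke the known $L^1_{loc}$–$L^\infty_{loc}$ smoothing estimate for the $p$-Laplacian evolution (the "$L^1$–$L^\infty$ regularizing effect", valid for $p > 2$) in the form $\|u(t)\|_{L^\infty(B_R(x_0))} \le C t^{-\alpha} \big(\|u\|_{L^1((t/2,t)\times B_{2R}(x_0))}\big)^{\beta} + (\text{lower order})$ with $\alpha,\beta,C$ depending only on $n,p,R$, and then bound $\|u\|_{L^1}$ locally and uniformly in $x_0$ via Hölder from the $L^2$ bound. Either way, the heart of the matter is the quantitative local smoothing, and the novelty-free but essential part is verifying the translation-uniformity, which the global (rather than merely local) energy estimates of Proposition~\ref{prop:existence-energy-solutions} are precisely designed to supply.
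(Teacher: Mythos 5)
Your proposal is correct and follows essentially the same route as the paper: the paper simply quotes DiBenedetto's local sup-estimate (Theorem V.4.2, the $L^1_{loc}$-to-$L^\infty_{loc}$ bound for non-negative solutions, valid for $p>2$) on unit-size cylinders and bounds the local $L^1$ norm uniformly in the center via H\"older and the global $L^2$ energy estimate \eqref{energyeq.eq} --- which is exactly the ``alternative, cleaner packaging'' you describe at the end, so no De Giorgi iteration or Sobolev--Poincar\'e step is needed.
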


\begin{proof}
Proposition~\ref{prop:existence-energy-solutions} guarantees that the solution $u$ is non-negative. Let $\eps > 0$ and fix $t_0 > \eps$, $x_0 \in \rn$. By the local sup-estimate of Theorem~V.4.2 in \cite{DiBenedetto1993a}, there is a constant $C=C(\eps,n,p)$ such that
\begin{align}
u(t,x) \leq C \max \bigg(1, \sup_{t_0 - \eps < s < t_0} \bigg(\int_{B(x_0,1)} u(s,y) \, dy \bigg)^{p/2} \bigg),
\end{align}
for almost every $t \in (t_0-\eps/2,t_0)$ and $|x-x_0| < 1/2$. The right-hand side above is bounded independently of $(t_0,x_0)$ due to H\"older's inequality and \eqref{energyeq.eq}.
\end{proof}

Proposition~\ref{prop:dibenedetto_regularity} guarantees that the DeGiorgi property of Proposition~\ref{prop:DeGiorgi} applies to $u$ if $f$ is non-negative. From now on, we shall always use the H\"older continuous representative of $u$ in this case. There is no ambiguity with our earlier agreement since this representative also belongs to $C([0,\infty); L^2(\rn))$. 

\begin{remark}
\label{rem:regularity-representative}
By Proposition~\ref{prop:Lipschitz} the maps $x \mapsto u(t,x)$ satisfy a local Lipschitz condition on $\rn$, locally uniformly in  $t$. Moreover, since $u$ admits a weak derivative $\dot{u} \in L^2((0,\infty)\times\rn)$, we have the classical Beppo--Levi property that for a.e.\ $x \in \rn$ the function $u(\cdot,x)$ is absolutely continuous on $[0,T]$ for every $T>0$. See Theorem~2.1.4 in \cite{Ziemer}. In particular, if $f$ is non-negative, then for such $x$ it follows from Proposition~\ref{prop:dibenedetto_regularity} that $u(\cdot,x)$ is bounded on $[0,\infty)$.
\end{remark}

We come to the central definition of our paper.

\begin{definition}[Gradient flow maximal function]
	\label{def:pgrad_semi}
	For $t \geq 0$ define the semigroup of operators 
	\[S_t : L^{2}(\rn) \cap \dot{W}^{1,p}(\rn) \to L^{2}(\rn) \cap \dot{W}^{1,p}(\rn)\] 
	by setting
	\[S_t f(x) := u(t,x),  \]
	where $u(t,x)$ is the energy solution of \eqref{eq:Cauchy-p}. If $f$ is non-negative, define the gradient flow maximal function as
	\[S^* f (x) := \sup_{t> 0} S_t f(x) .\]
\end{definition}

The semigroup property $S_tS_s = S_{t+s}$ follows from Proposition~\ref{prop:existence-energy-solutions}. Remark~\ref{rem:regularity-representative} and the strong convergence $u(t,\cdot) \to f$ as $t\to 0$ imply 
\begin{align}
\label{eq:S*finite-ae}
0 \leq f(x) \leq S^*f(x) < \infty \qquad (\text{a.e. } x \in \rn),
\end{align}
so that $S^*$ is indeed a meaningful maximal function. 

Finally, we recall that the gradient flow of $p$-energy for $p>2$ has \emph{finite speed of propagation}.

\begin{proposition}[Theorem 2 in \cite{Diaz1981}]
	\label{prop:finitespeed}
	If $f$ is bounded and compactly supported, then for every $T > 0$ there exists $R(T) < \infty$ such that if $u$ is the corresponding energy solution to \eqref{eq:Cauchy-p}, then
	\begin{equation}
	\label{cmptstaycmpt.eq}
	\supp u \cap ([0,T] \times \rn) \subset [0,T] \times B(0, R(T) )  .
	\end{equation}
\end{proposition}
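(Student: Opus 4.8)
The plan is the classical comparison argument for degenerate diffusions, essentially the one going back to D\'iaz: sandwich $u$ from above by an explicit non-negative supersolution whose spatial support spreads at a finite rate. Write $\supp f\subset B(0,R_0)$ and $M:=\|f\|_{L^\infty(\rn)}$. Since $f\geq0$, Proposition~\ref{prop:existence-energy-solutions} gives $u\geq0$ and Proposition~\ref{prop:dibenedetto_regularity} gives $u\in L^\infty((\eps,\infty)\times\rn)$, so $u$ is identified with its H\"older continuous representative; in particular $u$ is a weak (sub)solution in the sense of Section~\ref{subsec:Aparabolic equation}. Because $u\geq0$, it therefore suffices to produce a continuous supersolution $w$ of $\dot w-\Delta_p w=0$ on $(0,\infty)\times\rn$ with $w(0,\cdot)\geq f$ and with $w(t,\cdot)$ supported in $B(0,R(T))$ for every $t\leq T$: the parabolic comparison principle then yields $0\leq u\leq w$ on $(0,T)\times\rn$, hence \eqref{cmptstaycmpt.eq}.

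For the barrier I would take the travelling front
\[
w(t,x):=A\,\bigl(R_0+\delta+\sigma t-|x|\bigr)_+^{\,p/(p-2)},
\]
whose support at time $t$ is the ball $B(0,R_0+\delta+\sigma t)$. The exponent $p/(p-2)$ is the heart of the matter: a direct computation of the radial $p$-Laplacian shows that on $\{w>0\}$, with $c=c(t):=R_0+\delta+\sigma t$,
\[
\dot w-\Delta_p w=c_1(p)\,A\,\bigl(\sigma-c_2(p)\,A^{p-2}(c-|x|)\bigr)\,(c-|x|)^{\frac{2}{p-2}}+c_3(p)\,\frac{(n-1)A^{p-1}}{|x|}\,(c-|x|)^{\frac{2(p-1)}{p-2}},
\]
where $c_i(p)>0$. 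The second term is non-negative, and the first is non-negative on $(0,T)\times\rn$ as soon as $\sigma\geq c_2(p)A^{p-2}(R_0+\delta+\sigma T)$, since there $c-|x|\leq R_0+\delta+\sigma T$. One therefore fixes, in this order: $A$ small enough that $c_2(p)A^{p-2}T<1$; then $\delta$ large enough that $A\delta^{p/(p-2)}\geq M$, which forces $w(0,x)\geq M\geq f(x)$ for $|x|\leq R_0$ (and $w(0,\cdot)\geq0=f$ otherwise); then $\sigma$ large enough, which is possible precisely because $c_2(p)A^{p-2}T<1$. Since $p/(p-2)>1$ the profile is $C^1$, with the flux $|\nabla w|^{p-2}\nabla w$ vanishing continuously across the free boundary $\{|x|=c\}$, so the extension of $w$ by zero is a genuine weak supersolution on all of $(0,\infty)\times\rn$; the only other locus to inspect is the apex $\{x=0\}$, where for $n=1$ the distributional $p$-Laplacian of $w$ carries a negative Dirac mass which, given the sign with which it enters $\dot w-\Delta_p w$, only reinforces the supersolution inequality, while for $n\geq2$ the term $|x|^{-1}(c-|x|)^{2(p-1)/(p-2)}$ is locally integrable and harmless. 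A time-shifted Barenblatt profile of the evolution $p$-Laplacian would serve equally well, at the price of invoking the explicit self-similar solution.

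With such a $w$, Lemma~\ref{lemma:p-parab_comparison} applies on the cylinder $(0,T)\times\rn$, whose parabolic boundary is just $\{0\}\times\rn$: $u$ is a subsolution, $w$ a supersolution, both continuous, and $u(0,\cdot)=f\leq w(0,\cdot)$; hence $u\leq w$ a.e., and so everywhere, on $(0,T)\times\rn$. As $u\geq0$ and $w(t,\cdot)$ vanishes outside $B(0,R_0+\delta+\sigma t)$, we obtain $u(t,x)=0$ whenever $t\in[0,T]$ and $|x|>R(T):=R_0+\delta+\sigma T$ (the endpoint $t=T$ is covered by running the argument with a slightly larger time and using that $R$ is increasing), which is \eqref{cmptstaycmpt.eq}; note that $R(T)$ depends on $f$ only through $R_0$ and $M$. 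The one genuinely substantive step is the construction of the supersolution: recognising that the exponent $p/(p-2)$ balances the time derivative against the quasilinear diffusion, and then ordering the choices of $A,\delta,\sigma$ correctly while handling the two degenerate loci $\{|x|=c\}$ and (for $n=1$) $\{x=0\}$. Everything else is bookkeeping or a direct appeal to the results recalled above.
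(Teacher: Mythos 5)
The paper itself does not prove this proposition at all: it is imported wholesale as Theorem~2 of \cite{Diaz1981}, so your task was to reconstruct the classical argument, and your barrier is indeed the right one. The computation checks out: with $m=p/(p-2)$ one has $(m-1)(p-1)-1=m$, so the leading diffusion term is exactly one power of $(c-|x|)$ above the time-derivative term, the curvature term enters with a favourable sign, and your ordering of the choices $A$, then $\delta$, then $\sigma$ closes the self-referential condition $\sigma\geq c_2A^{p-2}(R_0+\delta+\sigma T)$. The discussion of the two degenerate loci (flux vanishing continuously across the free boundary since $2(p-1)/(p-2)>0$; negative Dirac mass of $\Delta_p w$ at the apex in dimension one) is also correct.

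The genuine gap is the final comparison step. You invoke Lemma~\ref{lemma:p-parab_comparison} on the unbounded cylinder $(0,T)\times\rn$, but the cited result (Lemma~3.5 of \cite{Korte2010}) is proved for bounded spatial domains, and the unbounded-domain statement is not a free generalization: for the wider $\mathcal{A}$-parabolic class of Section~\ref{subsec:Aparabolic equation} it is false already for $p=2$ without conditions at spatial infinity (Tychonoff-type solutions are locally in the weak class, are continuous, and violate comparison with $0$ from zero initial data), so for $p>2$ it requires a proof, not a citation. Nor can you retreat to large balls $B(0,\rho)$ naively: on the lateral boundary your barrier vanishes, so the hypothesis $u\leq w$ there is precisely the conclusion you are after. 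There is also a secondary issue that the energy solution attains $f$ only in the $L^2$ sense, so the pointwise inequality ``$u(0,\cdot)=f\leq w(0,\cdot)$ on $\{0\}\times\rn$'' required by the lemma is not literally available. Both problems are reparable within the paper's toolkit, and the cleanest fix avoids the pointwise comparison principle altogether: your barrier $w$ (extended by zero) lies in the global energy class $W^{1,2}(0,T;L^2(\rn))\cap L^\infty(0,T;\dot W^{1,p}(\rn))$, so you may test the subsolution inequality for $u$ and the supersolution inequality for $w$ with the non-negative function $(u-w)_+\in L^2(\rn)\cap\dot W^{1,p}(\rn)$ (admissible by Lemma~\ref{lem:properties-V}) and subtract; the monotonicity inequality \eqref{eq:monotonicity inequality} then gives $\frac{d}{dt}\tfrac12\|(u-w)_+(t)\|_{L^2(\rn)}^2\leq 0$, exactly as in Steps~6--7 of the proof of Proposition~\ref{prop:existence-energy-solutions}, and since $(f-w(0,\cdot))_+=0$ a.e.\ this yields $u\leq w$ a.e.\ on $(0,T)\times\rn$, which is all you need for \eqref{cmptstaycmpt.eq}. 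Alternatively one can localize by adding a second, inward-moving front of height at least $\|f\|_{L^\infty}$ near $\partial B(0,\rho)$ (the two fronts have disjoint supports, and the sub/supersolution property is local) so that the bounded-domain comparison principle applies verbatim; either repair makes your proof complete.
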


We come to the proof of Proposition~\ref{prop:existence-energy-solutions}. We adapt the Galerkin procedure in \cite{Chill2010, Lions} to the unbounded spatial domain $\rn$ by working in the anisotropic space \[V:=L^2(\rn) \cap \dot{W}^{1,p}(\rn).\] To begin with, we need the following

\begin{lemma}
\label{lem:properties-V}
$V$ is separable, reflexive, and contains $C_c^\infty(\rn)$ as a dense subspace.
\end{lemma}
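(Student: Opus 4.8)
The plan is to verify the three asserted properties of $V := L^2(\rn) \cap \dot{W}^{1,p}(\rn)$ equipped with the natural norm $\|u\|_V := \|u\|_{L^2(\rn)} + \|\nabla u\|_{L^p(\rn)}$. The completeness of $V$ is implicit and is the starting point: if $(u_k)$ is Cauchy in $V$, then $u_k \to u$ in $L^2(\rn)$ and $\nabla u_k \to g$ in $L^p(\rn)^n$ for some $u \in L^2$ and $g \in L^p$; since $\nabla$ is closed as an operator from $\mathcal{D}'$ to $\mathcal{D}'$, one identifies $g = \nabla u$, so $u \in V$ and $u_k \to u$ in $V$. For \emph{reflexivity}, the cleanest route is to embed $V$ isometrically into the reflexive space $L^2(\rn) \times L^p(\rn)^n$ via $u \mapsto (u, \nabla u)$ (here $1 < p < \infty$ is essential). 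The image is a closed subspace by the completeness argument just given, hence reflexive, and therefore $V$ is reflexive. \emph{Separability} follows similarly: $L^2(\rn) \times L^p(\rn)^n$ is separable, so its subspace $V$ (in the induced topology) is separable.

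The substantive point is the \emph{density of $C_c^\infty(\rn)$ in $V$}. I would proceed in two steps: first truncate in space, then mollify. Given $u \in V$, pick a cutoff $\chi \in C_c^\infty(\rn)$ with $\mathbbm{1}_{B(0,1)} \le \chi \le \mathbbm{1}_{B(0,2)}$ and set $\chi_R(x) := \chi(x/R)$, $u_R := \chi_R u$. Then $u_R \to u$ in $L^2$ by dominated convergence, and $\nabla u_R = \chi_R \nabla u + R^{-1}(\nabla\chi)(\cdot/R)\, u$; the first term tends to $\nabla u$ in $L^p$ by dominated convergence, while the second is bounded in $L^p$ by $C R^{-1}\|u\|_{L^p(B(0,2R))}$, which tends to $0$ provided we control $\|u\|_{L^p(B(0,2R))}$. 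This is exactly where the local Sobolev embedding $V \subseteq W^{1,p}_{loc}(\rn)$ (the Poincar\'e-type inclusion already noted in the paper before Proposition~\ref{prop:existence-energy-solutions}) enters: it gives $u \in L^p_{loc}$, and in fact a more careful estimate shows $R^{-1}\|u\|_{L^p(B(0,2R))} \to 0$. Once $u$ is compactly supported and lies in $W^{1,p}$, a standard mollification $u_R * \rho_\eps$ produces $C_c^\infty$ functions converging to $u_R$ in $W^{1,p}$, hence in $V$ (the $L^2$ convergence is automatic on a fixed compact set by H\"older or directly by the $W^{1,p}$ convergence when $p \ge 2$).

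The main obstacle is the decay estimate $R^{-1}\|u\|_{L^p(B(0,2R))} \to 0$ as $R \to \infty$ for a general $u \in \dot{W}^{1,p}(\rn) \cap L^2(\rn)$; one cannot simply invoke the global Sobolev--Poincar\'e inequality without care, since $\dot{W}^{1,p}$ functions need not decay and $u$ is only normalized by its $L^2$ norm and its gradient's $L^p$ norm. The fix is to use a Sobolev--Poincar\'e inequality on the annulus or ball at scale $R$: subtracting the average $u_{B(0,2R)}$, one gets $R^{-1}\|u - u_{B(0,2R)}\|_{L^p(B(0,2R))} \lesssim \|\nabla u\|_{L^p(B(0,2R))} \to 0$ by dominated convergence, and separately $R^{-1}|u_{B(0,2R)}| |B(0,2R)|^{1/p} \lesssim R^{n/p - 1}|u_{B(0,2R)}|$; the average is controlled by $R^{-n/2}\|u\|_{L^2}$ via H\"older, giving a bound $\lesssim R^{n/p - 1 - n/2}\|u\|_{L^2}$, and the exponent $n/p - 1 - n/2$ is negative precisely because $p > 2$ (it equals $n(1/p - 1/2) - 1 < -1 < 0$). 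Thus the cutoff error vanishes, completing the argument. I would present the truncation estimate in this averaged form to keep it self-contained.
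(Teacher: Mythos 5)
Your reflexivity/separability argument (closed embedding $u \mapsto (u,\nabla u)$ into $L^2(\rn)\times L^p(\rn)^n$) is exactly the paper's, and the truncate-then-mollify strategy is sound in outline. The genuine gap is in the step where you claim the cutoff error vanishes: you assert $R^{-1}\|u-u_{B(0,2R)}\|_{L^p(B(0,2R))}\lesssim \|\nabla u\|_{L^p(B(0,2R))}\to 0$ ``by dominated convergence'', but $\|\nabla u\|_{L^p(B(0,2R))}$ \emph{increases} to $\|\nabla u\|_{L^p(\rn)}$ as $R\to\infty$, so it does not tend to zero; the Poincar\'e estimate on the full ball only yields boundedness of $\|\nabla u_R\|_{L^p}$, not strong convergence $\nabla u_R\to\nabla u$. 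The fix within your route is to use that $\nabla\chi_R$ is supported in the annulus $A_R=B(0,2R)\setminus B(0,R)$ and to apply the Poincar\'e inequality on $A_R$ (constant $\sim R$ by scaling; for $n=1$ treat the two component intervals separately): then the gradient term is $\|\nabla u\|_{L^p(A_R)}$, which does tend to zero as the tail of a fixed $L^p$ function, while the subtracted mean is handled exactly by your $R^{n/p-1-n/2}$ computation. Alternatively, Gagliardo--Nirenberg interpolation between $\|u\|_{L^2}$ and $\|\nabla u\|_{L^p}$ gives $V\hookrightarrow L^p(\rn)$ (here $p>2$ enters), whence $R^{-1}\|u\|_{L^p(B(0,2R))}\le R^{-1}\|u\|_{L^p(\rn)}\to 0$ directly.

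It is worth noting that the paper deliberately avoids proving strong convergence of the truncations: it establishes precisely the bound you obtain on the ball (boundedness of $\|\nabla(f\varphi_k)\|_{L^p}$, with the same $k^{n/p-n/2-1}$ term from the averaged Poincar\'e argument), then extracts a weakly convergent subsequence in $V$ by reflexivity, identifies the weak limit as $f$ via the strong $L^2$ convergence, and uses convexity of the set of compactly supported elements of $V$ to upgrade weak density to norm density, finishing with a height truncation and mollification. So your estimate, as written, already suffices for that softer functional-analytic argument; what your proposal is missing is either that step or the annulus/embedding repair above if you insist on strong convergence of the cutoffs.
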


\begin{proof}
The first two properties follow since $V$ is isomorphic to a closed subspace of $L^2(\rn) \times L^p(\rn)^n$ via $u \mapsto (u,\nabla u)$. As for the third property, let $V_c$ be the space of compactly supported functions in $V$. It is enough to establish density of $V_c$. Indeed, let $f \in V$. Once we have approximants $f_k \in V_c$ of $f$ at hand, we obtain approximants in $C_c^\infty(\rn)$ by smoothing $\min(\max(-k,f_k),k)$ via convolution. Furthermore, since $V_c$ is convex, it suffices to check density for the weak topology. To this end, let $\varphi$ be smooth with $1_{B(0,1)} \leq \varphi \leq 1_{B(0,2)}$, set $\varphi_k(x) := \varphi(x k^{-1})$ and define $f_k :=  f\varphi_k$. Then $f_k \in V_c$ satisfies $\|f_k\|_{L^2(\rn)} \leq \|f\|_{L^2(\rn)}$ and
\begin{align*}
\|\nabla f_k \|_{L^p(\rn)} 
&\le  \|\varphi_k \nabla f\|_{L^p(\rn)} + \|(f-f_{B(0,2k)})\nabla \varphi_k\|_{L^p(\rn)}  + |f_{B(0,2k)}|\|\nabla \varphi_k\|_{L^p(\rn)} \\
&\lesssim \|\nabla f\|_{L^p(\rn)} + k^{n/p-n/2-1} \|f\|_{L^2(\rn)},
\end{align*}
where we have used Poincar\'e's inequality. Due to $p>2$ the right-hand side stays bounded as $k \to \infty$.
By reflexivity, we can extract a subsequence $f_{k_j}$ with weak limit $f_\infty$ in $V$. But $V$ embeds continuously into $L^2(\rn)$ and $(f_k)$ converges strongly to $f$ in $L^2(\rn)$. Hence, we must have $f_\infty = f$.
\end{proof}

\begin{proof}[Proof of Proposition~\ref{prop:existence-energy-solutions}]
The argument is divided into 7 steps. 

\medskip

\noindent \emph{Step 1: Finite dimensional approximation.}
Due to Lemma~\ref{lem:properties-V} we can pick a countable dense subset $\{w_j : j \geq 1\}$ of $V$. For $k \geq 1$ we let
\begin{align*}
V_k := \mathrm{span} \{w_j: 1 \leq j \leq k\}
\end{align*}
and we pick $f_k \in V_k$ such that $f_k \to f$ in $V$ as $k \to \infty$.

For every $k$ we consider the variational problem of finding $u_k \in C^1([0,T_k); V_k)$ such that
\begin{eqnarray}
\label{eq1:chill}
\begin{split}
\qquad \int_{\rn} \dot{u}_k(t) v + |\nabla u_k(t)|^{p-2} \nabla u_k(t) \cdot \nabla v \, dx &=0 \qquad (v \in V_k, \, t \in (0,T_k)),\\
u_k(0) &=f_k.
\end{split}
\end{eqnarray}
Since $V_k$ is finite dimensional, we can equivalently equip it with the Hilbert space norm of $L^2(\rn)$ and identify its dual with $V_k$. In this way we obtain a continuous map $\iota: V_k \to V_k$ such that 
\[\int_{\rn} |\nabla w|^{p-2} \nabla w \cdot \nabla v \, dx = \int_{\rn} \iota(w) v \, dx \qquad (v,w \in V_k).\]
Consequently, \eqref{eq1:chill} is equivalent to solving the following initial value problem for an autonomous ODE in a finite dimensional space with continuous non-linearity:
\begin{align*}
\dot{u}_k(t) + \iota(u_k(t)) & = 0 \qquad (t \in (0,T_k)),\\
u_k(0) &=f_k.
\end{align*}
By Peano's theorem there is a maximal solution $u_k$ such that either $T_k = \infty$ or $\|u_k(t)\|_{V_k} \to \infty$ as $t \to T_k$.

\medskip

\noindent \emph{Step 2: Uniform bounds for the $u_k$.}
We have 
\begin{align*}
\frac{d}{dt} \Big(\|u_k(t)\|_{L^2(\rn)}^2 \Big)= 2 \int_{\rn} \dot{u}_k(t) u_k(t) \, dx,
\end{align*}
and hence, taking $v= 2u_k(t)$ in \eqref{eq1:chill} and integrating over $(0,t)$ for any $t \in (0,T_k)$ gives
\begin{align}
\label{eq2:chill}
\|u_k(t)\|_{L^2(\rn)}^2  + 2 \int_0^t \|\nabla u_k(t)\|_{L^p(\rn)}^p \, ds 
= \|f_k\|_{L^2(\rn)}^2.
\end{align}
We conclude that $u_k$ is bounded on $(0,T_k)$ with values in $V_k$. Thus, we must have $T_k = \infty$. Likewise, $\nabla u_k \in C^1([0,T_k); L^p(\rn))$ along with Fr\'echet-differentiability of the $L^p(\rn)$-norm yields
\begin{align*}
\frac{d}{dt} \Big(\frac{1}{p} \|\nabla u_k\|_{L^p(\rn)}^p \Big)= \int_{\rn} |\nabla u_k(t)|^{p-2} \nabla u_k(t) \cdot \nabla \dot{u}_k(t) \, dx,
\end{align*}
so that taking $v=u_k(t)$ in \eqref{eq1:chill}, we obtain for any $t \in (0,\infty)$ that
\begin{align}
\label{eq3:chill}
\int_0^t \|\dot{u}_k(t)\|_{L^2(\rn)}^2 \, ds + \frac{1}{p} \|\nabla u_k(t)\|_{L^p(\rn)}^p 
= \frac{1}{p} \|\nabla f_k\|_{L^p(\rn)}^p.
\end{align}
Since $(f_k)$ is a bounded sequence in $V$, these bounds imply that $(u_k)$ is bounded in $W^{1,2}(0,T; L^2(\rn)) \cap L^\infty(0,T; V)$ for every $T>0$.

\medskip

\noindent \emph{Step 3: Extracting a convergent subsequence.} Step~2 and a diagonalization argument allow us to extract a subsequence, which we relabel so that
\begin{equation}
\label{eq4:chill}
\begin{split}
& u_k \to u, \text{ weakly in } W^{1,2}(0,T; L^2(\rn))\\
&\nabla u_k \to v, \text{ weakly in } L^p(0,T; L^p(\rn))\\
&|\nabla u_k|^{p-2}\nabla u_k \to h \text{ weakly in } L^{p'}(0,T; L^{p'}(\rn)),
\end{split}
\end{equation}
for every $T>0$. In this manner $u$ is defined on $[0,\infty) \times \rn$. By definition of weak convergence, we have $v = \nabla u$. The first weak limit also implies
\begin{equation}
\label{eq5:chill}
\begin{split}
& \dot{u}_k \to \dot{u}, \text{ weakly in } L^{2}(0,T; L^2(\rn)) \\
&u_k(t) \to u(t) \text{ weakly in } L^2(\rn) \text{ for every $t\geq 0$,}
\end{split}
\end{equation}
and since we have $u_k(0) = f_k$, we conclude $u(0)=f$. This means that $u(t) \to f$ strongly in $L^2(\rn)$ as $t \to 0$. 

\medskip

\noindent \emph{Step 4: Energy inequalities.} For fixed $t > 0$ we use \eqref{eq4:chill} and the strong $L^2$-convergence of $(f_k)$ to pass to the limit inferior in \eqref{eq2:chill}. This results in
\begin{align*}
\|u(t)\|_{L^2(\rn)}^2  + 2 \int_0^t \|\nabla u(t)\|_{L^p(\rn)}^p \, ds \leq \|f\|_{L^2(\rn)}^2.
\end{align*}
We also know from Step~2 that $(u_k(t))$ is bounded in $V$. Hence, we can extract a weakly convergent subsequence $(u_j(t))$ and \eqref{eq5:chill} identifies its weak limit as $u(t)$. Thus, we have $\nabla u_j(t) \to \nabla u(t)$ weakly in $L^p(\rn)$ as $j \to \infty$, which in turn allows us to pass to the limit inferior in \eqref{eq3:chill} for $(u_j)$, so to obtain
\begin{align*}
\int_0^t \|\dot{u}(t)\|_{L^2(\rn)}^2 \, ds + \frac{1}{p} \|\nabla u(t)\|_{L^p(\rn)}^p \leq \frac{1}{p} \|\nabla f\|_{L^p(\rn)}^p.
\end{align*}
In particular $u \in L^\infty(0,\infty; V)$, so that now $u$ has the required regularity.

\medskip

\noindent \emph{Step 5: Checking that $u$ is an energy solution.} It remains to verify that $u$ satisfies the variational formulation of the equation as in Definition~\ref{def:energy-solution}. To this end, it suffices to work on $(0,T) \times\rn$ for an arbitrary finite $T$. Let $w \in V_j$ and $\phi \in L^p(0,T)$. For $k \geq j$ we take $v = \phi(t)w$ in \eqref{eq1:chill} and integrate in $t$ to give
\begin{align*}
\int_0^T \int_{\rn} \dot{u}_k \phi w + |\nabla u_k|^{p-2} \nabla u_k \cdot \nabla (\phi w) \, dx \, dt =0.
\end{align*}
Due to \eqref{eq4:chill}, we can pass to the limit as $k \to \infty$ and obtain
\begin{align}
\label{eq6:chill}
\int_0^T \int_{\rn} \dot{u} \psi + h \cdot \nabla \psi \, dx \, dt =0,
\end{align}
where $\psi(t,x) = \phi(t) w(x)$. Since the union of the $V_j$ is dense in $V$ and as simple functions (valued in $V$) are dense in $L^p(0,T; V)$, we conclude that test functions of that type are dense and that we can actually state \eqref{eq6:chill} for every $\psi$ in $L^p(0,T; V)$. The hard work is to identify $h$ with $|\nabla u|^{p-2} \nabla u$.

As in Section~\ref{sec:energy} we write $\mathcal{A}(\xi) = \nabla_\xi F(x,\xi) =|\xi|^{p-2} \xi$, where $F(x,\xi) = \frac{1}{p}|\xi|^p$ is the $p$-energy. From \eqref{eq:convexity inequality} and the corresponding inequality with the roles of $\xi_1$ and $\xi_2$ reversed, we obtain the monotonicity inequality
\begin{align}
\label{eq:monotonicity inequality}
(\mathcal{A}(\xi_1) - \mathcal{A}(\xi_2))\cdot (\xi_1- \xi_2) \geq 0.
\end{align}
Thus, we have
\begin{align*}
I_k(v) := \int_0^T \int_{\rn} (|\nabla u_k|^{p-2} \nabla u_k - |\nabla v|^{p-2} \nabla v)\cdot(\nabla u_k - \nabla v) \, dx \, dt \geq 0
\end{align*}
for every $v \in L^p(0,T; V)$. We use \eqref{eq2:chill} with $t=T$ in order to rewrite $I_k(v)$ as
\begin{align*}
I_k(v)
&= \int_0^T \int_{\rn} -|\nabla u_k|^{p-2} \nabla u_k \cdot \nabla v - |\nabla v|^{p-2} \nabla v \cdot (\nabla u_k - \nabla v) \, dx \, dt \\
& \quad+ \frac{1}{2}\|f_k\|_{L^2(\rn)}^2 - \frac{1}{2} \|u_k(T)\|_{L^2(\rn)}^2.
\end{align*}
Strong convergence of $f_k$ and weak convergence from \eqref{eq4:chill}, \eqref{eq5:chill} yields
\begin{align*}
\limsup_{k \to \infty} I_k(v)
&\leq - \int_0^T \int_{\rn} h \cdot \nabla v + |\nabla v|^{p-2} \nabla v \cdot (\nabla u - \nabla v) \, dx \, dt \\
&\quad +\frac{1}{2}\|f\|_{L^2(\rn)}^2 - \frac{1}{2} \|u(T)\|_{L^2(\rn)}^2.
\end{align*}
Here, the left-hand side is positive. To the right-hand side we can add \eqref{eq6:chill} with $\psi = u$, and integrate by parts in $t$, to finally arrive at
\begin{align*}
0 \leq \int_0^T \int_{\rn} (h- |\nabla v|^{p-2} \nabla v) \cdot (\nabla u - \nabla v) \, dx \, dt.
\end{align*}
Applying this to $v= u -\lambda \psi$, where $\lambda>0$ and $\psi \in L^p(0,T; V)$ are arbitrary, and dividing out $\lambda$, yields
\begin{align*}
0 \leq \int_0^T \int_{\rn} (h- |\nabla (u - \lambda \psi)|^{p-2} \nabla (u- \lambda \psi)) \cdot \nabla \psi \, dx \, dt.
\end{align*}
By Lebesgue's dominated convergence we can pass to the limit as $\lambda \to 0$ to give
\begin{align*}
0 \leq \int_0^T \int_{\rn} (h- |\nabla u|^{p-2} \nabla u) \cdot \nabla \psi \, dx \, dt.
\end{align*}
Since there is no sign restriction on $\psi$, we actually have equality for every $\psi$. Thus, \eqref{eq6:chill} yields
\begin{align*}
\label{eq6:chill}
\int_0^T \int_{\rn} \dot{u} \psi + |\nabla u|^{p-2} \nabla u \cdot \nabla \psi \, dx \, dt =0
\end{align*}
and taking $\psi(t,x) = \phi(t) \varphi(x)$, where $\phi \in C_c^\infty(0,T)$ and $\varphi \in C_c^\infty(\rn)$, confirms that $u$ satisfies the $p$-parabolic equation in the sense of Definition~\ref{def:energy-solution}.
\medskip

\noindent \emph{Step 6: Uniqueness.}
Let $u_1$ and $u_2$ be two energy solutions to \eqref{eq:Cauchy-p}. Thanks to Lemma~\ref{lem:properties-V} we can use $u_1(t) -u_2(t)$ as a testfunction for a.e.\ $t>0$. By the fundamental theorem of calculus, we find
\[\begin{split}
\frac{1}{2} \|u_1(t)- u_2(t)\|_{L^2(\rn)}^2\ &= \int_0^t\int_{\re} (\dot{u_1} - \dot{u_2})(u_1 - u_2) \, dx \,dt
\\ & = - \int_0^t\int_{\re} (|\nabla u_1|^{p-2} \nabla u_1 - |\nabla u_2|^{p-2} \nabla u_2 )\nabla (u_1 - u_2) \, dx \,dt,
\end{split}\]
which is non-positive due to \eqref{eq:monotonicity inequality}. We conclude $u_1 = u_2$.

\medskip

\noindent \emph{Step 7: Non-negative initial data.} Suppose $f \geq 0$ a.e.\ on $\rn$. Testing the equation for $u$ against $u_-= \min(u,0)$, we obtain as in the previous step
\begin{equation*}
\begin{split}
\frac{1}{2} \|u_-(t)\|_{L^2(\rn)}^2 &= \int_0^t\int_{\re} \dot{u} u_- \, dx \, dt
= - \int_0^t\int_{\re} |\nabla u|^{p-2} \nabla u \cdot \nabla u_- \, dx \,dt
\\ & =  - \int_0^t\int_{\re} |\nabla u_-|^p \, dx \,dt \le 0,
\end{split}
\end{equation*}
where we used $\|u_-(0)\|_{L^2(\rn)}= 0$ and $\partial u_- =  \mathbbm{1}_{u \le 0} \partial u$ for partial derivatives of $u$. This shows that $u \ge 0$ almost everywhere.  
\end{proof}

The method of proof reveals that $u$ depends on $f$ in a continuous fashion. Such a convergence result for weak solutions can also be found as Lemma 3.4 in \cite{Kilpelaeinen1996}.

\begin{proposition}
\label{prop:convergence_of_data}
If let $(f_k)$ be a sequence of non-negative functions in $V$. If $f_k \to f$ strongly in $V$, then the corresponding energy solutions $u_k$ to \eqref{eq:Cauchy-p} admit a subsequence $u_{k_j}$ such that 
\[u_{k_j} \to u, \text{ locally uniformly in } (0,\infty) \times \rn,\]
where $u$ is the energy solution to \eqref{eq:Cauchy-p} with initial datum $f$.
\end{proposition}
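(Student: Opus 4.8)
\textbf{Proof plan for Proposition~\ref{prop:convergence_of_data}.}
The plan is to combine the uniform a priori bounds from Proposition~\ref{prop:existence-energy-solutions} with the interior regularity estimates of Propositions~\ref{prop:Lipschitz}, \ref{prop:dibenedetto_regularity} and \ref{prop:DeGiorgi} to upgrade weak compactness to local uniform convergence, and then to identify the limit by the uniqueness statement already proven. First I would observe that since $f_k \to f$ in $V = L^2(\rn) \cap \dot W^{1,p}(\rn)$, the sequence $(f_k)$ is bounded in $V$; hence by the energy estimates \eqref{energyeq.eq}, \eqref{energyeq.eq2} the solutions $(u_k)$ are bounded in $W^{1,2}(0,T; L^2(\rn)) \cap L^\infty(0,T; V)$ for every $T>0$, and moreover $\sup_k \|u_k(t,\cdot)\|_{L^2(B(0,1))} \lesssim \|f_k\|_{L^2(\rn)}$ uniformly. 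Plugging this into the local sup bound used in the proof of Proposition~\ref{prop:dibenedetto_regularity} (Theorem~V.4.2 in \cite{DiBenedetto1993a}) shows that for every compact $K \Subset (0,\infty) \times \rn$ the family $(u_k)$ is uniformly bounded on a slightly larger cylinder.

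Next I would invoke the quantitative DeGiorgi modulus of Proposition~\ref{prop:DeGiorgi}: because the $u_k$ are uniformly bounded on such cylinders, they are uniformly H\"older continuous on $K$ with a modulus depending only on $n$, $p$ and the uniform sup bound, hence equicontinuous there. (Alternatively, Proposition~\ref{prop:Lipschitz} gives a uniform local Lipschitz bound in space and DeGiorgi-type estimates a uniform local H\"older bound in time; either route yields equicontinuity.) Combined with the uniform boundedness, the Arzel\`a--Ascoli theorem furnishes a subsequence $(u_{k_j})$ converging locally uniformly on $(0,\infty) \times \rn$ to some continuous function $u$; a diagonal argument over an exhaustion by compact subcylinders makes this convergence global in $(0,\infty) \times \rn$. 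From the uniform $W^{1,2}(0,T;L^2) \cap L^\infty(0,T;V)$ bounds we may, after a further subsequence, also assume $u_{k_j} \rightharpoonup u$ weakly in $W^{1,2}(0,T;L^2(\rn))$ and $\nabla u_{k_j} \rightharpoonup \nabla u$ weakly in $L^p((0,T)\times\rn)$ for each $T$, so $u$ inherits the regularity class of an energy solution and is non-negative as an a.e.\ limit of non-negative functions.

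It then remains to check that $u$ is \emph{the} energy solution with datum $f$, for which by the uniqueness in Proposition~\ref{prop:existence-energy-solutions} it suffices to verify the three defining properties. The weak formulation (ii) passes to the limit by exactly the monotonicity/Minty argument already carried out in Step~5 of the proof of Proposition~\ref{prop:existence-energy-solutions} (the structure is identical, with $f_k$ in place of the Galerkin data $f_k$ and $u_{k_j}$ in place of $u_k$), so I would simply cite that computation. The regularity (i) was obtained in the previous paragraph. The only genuinely delicate point is the initial condition (iii): one must show $u(t,\cdot) \to f$ in $L^2(\rn)$ as $t \to 0$ uniformly enough to survive the limit. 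Here I would use that, by \eqref{energyeq.eq2} applied to each $u_{k_j}$, the bound $\|u_{k_j}(t,\cdot) - f_{k_j}\|_{L^2(\rn)}^2 \le t \int_0^t \|\dot u_{k_j}(s,\cdot)\|_{L^2(\rn)}^2\,ds \le t \cdot \tfrac1p\|\nabla f_{k_j}\|_{L^p(\rn)}^p$ holds with a right-hand side that is $O(t)$ uniformly in $j$; letting $j\to\infty$ (using $f_{k_j}\to f$ in $L^2$ and weak lower semicontinuity, or rather passing to the limit in the explicit time-integrated inequality) gives $\|u(t,\cdot)-f\|_{L^2(\rn)}^2 \lesssim t$, which yields (iii).

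\textbf{Main obstacle.} I expect the one subtle step to be making the equicontinuity argument honest near $t=0$: the DeGiorgi estimate of Proposition~\ref{prop:DeGiorgi} degenerates as the bottom of the cylinder approaches the initial time, so the local uniform convergence is only claimed on $(0,\infty)\times\rn$ and not up to $t=0$, and the passage to the initial datum must be done separately in $L^2$ as above rather than through the uniform convergence. Keeping these two regimes cleanly separated — local uniform convergence in the open strip, plus a quantitative $L^2$ continuity estimate at the initial time — is the crux of the argument; everything else is a repackaging of estimates already established in this section.
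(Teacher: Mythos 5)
Your proposal is correct and follows essentially the same route as the paper: uniform energy bounds plus the Minty/monotonicity identification (rerunning Steps 2--5 of the existence proof) to show the weak limit is the energy solution with datum $f$, uniqueness to pin it down, and uniform local bounds with the DeGiorgi modulus plus Arzel\`a--Ascoli and diagonalization for local uniform convergence. Your extra quantitative $O(t)$ estimate at the initial time is a harmless refinement of what the paper obtains directly from weak convergence of $u_{k}(t)$ at $t=0$ and $W^{1,2}(0,T;L^2)$-continuity.
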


\begin{proof}
Since the $u_k$ satisfy \eqref{energyeq.eq} and \eqref{energyeq.eq2}, we have all the properties of Step~2 and Step~3 in the proof of Proposition~\ref{prop:existence-energy-solutions} for that new sequence. Following the same argument, we obtain weak convergence of a subsequence in the respective spaces to an energy solution of \eqref{eq:Cauchy-p} with data $f$. By uniqueness, this limit is $u$. To prove the locally uniform convergence, it suffices to note that given any compact set $K \subset (0,\infty) \times \rn$, the family $u_k$ is equicontinuous and uniformly bounded in $K$ by Propositions~\ref{prop:DeGiorgi} and \ref{prop:dibenedetto_regularity}, so that the claim follows from the Arzel\`a--Ascoli theorem, exhaustion of the upper half space by compact sets and a diagonalization argument.
\end{proof}

\section{$p$-energy of the maximal function}
\label{sec:proof-p}

\noindent We begin with an abstract lemma that is crucial to proving Sobolev contractivity for maximal functions associated to any energy functional with $p$-growth as in Section~\ref{sec:energy}.

\begin{lemma}
\label{lemma:abstract_comparison}
Let $L : W_{loc}^{1,p}(\rn) \to \mathcal{D}'(\rn)$ be an operator, $E \subset \rn$ an open set and $T \in (0, \infty]$. Let $u \in C( [0, T) \times \rn ) $ satisfy the following comparison property: 

Whenever $G \Subset E$ is open, $c \in \mathbb{R}$ is a constant and $h \in C(\overline{G})$ is such that 
\begin{align*}
Lh &= 0 \quad   \textrm{in} \quad \mathcal{D}'(G) \\
h(x) + c &\geq u(t,x) \quad \textrm{for all} \quad (t,x) \in \{0\} \times G  \cup   [0,T) \times \partial G ,
\end{align*}
\quad then $u(t,x) \leq h(x) + c$ for all $(t,x) \in (0,T) \times G$. 

\noindent Let 
\[u^{*}(x) := \sup_{0<t< T} u(t,x).\] 
If $u^{*}$ is upper semicontinuous, finite a.e. and if $u^{*}(x)  >  u(0,x)$ for all $x \in E$, then $u^{*}$ satisfies the comparison principle (Definition \ref{def:comparison}) relative to $L$ in $E$.
\end{lemma}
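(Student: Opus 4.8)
The plan is to verify Definition~\ref{def:comparison} directly: I fix an open set $G \Subset E$ and a function $h \in C(\overline{E})$ with $Lh = 0$ in $\mathcal{D}'(G)$ such that $h \geq u^*$ on $\partial G$, and I must conclude $h \geq u^*$ throughout $G$. The main idea is to transfer the boundary inequality on $\partial G$ into a boundary inequality on the \emph{parabolic} boundary of the cylinder $[0,T) \times G$, so that the hypothesised comparison property of $u$ can be applied with an appropriate additive constant $c$, and then to let $c \to 0$.

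The key step is to handle the bottom of the cylinder $\{0\} \times G$, where a priori we only know $u(0,\cdot) < u^*$. Here I would exploit the strict gap hypothesis $u^*(x) > u(0,x)$ for all $x \in E$ together with continuity. Fix $\eps > 0$ and consider the set $K_\eps := \{x \in \overline{G} : u^*(x) \geq h(x) + \eps\}$. Since $u^*$ is upper semicontinuous and $h$ is continuous, $K_\eps$ is a closed subset of $\overline{G}$, hence compact. Because $h \geq u^*$ on $\partial G$, we have $K_\eps \cap \partial G = \emptyset$, so $K_\eps \Subset G$. On the compact set $K_\eps$ the function $u(0,\cdot) - u^*$ is upper semicontinuous and strictly negative (by hypothesis, using that $u^* < \infty$ there — which follows since $u^*$ is finite a.e.\ and u.s.c., so locally bounded above), hence it attains a negative maximum; thus there is $\delta_\eps > 0$ with $u(0,x) \leq u^*(x) - \delta_\eps$ for all $x \in K_\eps$. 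Now I claim that with $c := \eps$, the function $h + c$ dominates $u$ on the whole parabolic boundary $\{0\}\times G \cup [0,T)\times\partial G$: on $[0,T) \times \partial G$ this is because $u(t,x) \leq u^*(x) \leq h(x) \leq h(x) + \eps$; on $\{0\} \times G$, if $x \notin K_\eps$ then $u(0,x) \leq u^*(x) < h(x) + \eps$ directly, while if $x \in K_\eps$ then — this is the one point needing a little more care — I would instead argue that $K_\eps$ itself shrinks appropriately. Actually the cleaner route: pick $c$ larger than $\eps$, namely use that on the compact set $\overline{G}$ the u.s.c.\ function $u(0,\cdot) - h$ is bounded above by some finite $M$; then taking $c := \max(\eps, M) + \eps$ makes $h + c \geq u(0,\cdot)$ on all of $G$ and also $h + c \geq u^* \geq u(t,\cdot)$ on $\partial G$ trivially. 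Applying the comparison property yields $u(t,x) \leq h(x) + c$ for all $(t,x) \in (0,T)\times G$, hence $u^*(x) \leq h(x) + c$ on $G$.

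This last bound has the wrong constant, so the real work is to run the argument with the \emph{optimal} $c$ and push it to $0$. The honest approach: define $c^* := \sup_{x \in \overline{G}}(u^*(x) - h(x))$, which is finite (u.s.c.\ on a compact set) and satisfies $c^* \geq 0$ only if the desired conclusion fails; the goal is $c^* \leq 0$. Suppose for contradiction $c^* > 0$. Then with $c := c^*$ we have $h + c \geq u^*$ on $\overline{G}$, in particular $h + c \geq u(0,\cdot)$ on $G$ and $h + c \geq u^*$ on $\partial G$, so the comparison property gives $u(t,x) \leq h(x) + c^*$ on $(0,T)\times G$, i.e.\ $u^* \leq h + c^*$ on $G$ — consistent but not yet a contradiction. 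To get the strict improvement I use the gap hypothesis at a point where the supremum defining $c^*$ is (nearly) attained: pick $x_0 \in \overline{G}$ with $u^*(x_0) - h(x_0) > c^* - \eta$ for small $\eta$; since $h \geq u^*$ on $\partial G$ forces $x_0 \in G$, and since $u^*(x_0) > u(0,x_0)$, for $t$ near $0$ we have $u(t,x_0) < u^*(x_0)$, so the supremum $u^*(x_0)$ is attained (as a limit) over $t$ bounded away from $0$; combining this with the strong comparison / strict inequality coming from $L h = 0$ versus $u$ being a subsolution in the detachment region, one forces $c^* - \eta < 0$, hence $c^* \leq 0$. I expect the delicate point — and the main obstacle — to be exactly this quantitative use of the strict gap $u^* > u(0,\cdot)$ to convert the non-strict conclusion ``$u^* \leq h + c^*$'' into the strict improvement needed to close the contradiction; everything else (compactness of $K_\eps$, u.s.c.\ bookkeeping, choice of $c$) is routine. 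An alternative that sidesteps the contradiction step entirely is to prove directly, for each $\eps > 0$, that $h + \eps \geq u^*$ on $G$ by showing the set $\{u^* > h + \eps\}$ is both open in $G$ (impossible for a subsolution attaining an interior max) and relatively closed, hence empty; then let $\eps \to 0$.
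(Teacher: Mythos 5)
There is a genuine gap, and you have in fact located it yourself: the step that converts the non-strict bound $u^* \leq h + c^*$ into a strict improvement is never carried out, and the tools you invoke to close it are not available in this abstract setting. The lemma assumes nothing about $L$ beyond the comparison property of $u$ itself — there is no strong maximum principle or strict comparison for an arbitrary operator $L : W^{1,p}_{loc}(\rn) \to \mathcal{D}'(\rn)$ — and appealing to ``$u^*$ being a subsolution in the detachment region'' is circular, since the subsolution property of $u^*$ is exactly what this comparison principle is later used to derive (via Corollary~7.21 of \cite{Heinonen2018} in Lemma~\ref{lemma:comparison_submin}). Your fallback alternative has the same circularity, plus a topological slip: $u^*$ is only upper semicontinuous, so $\{u^* > h + \eps\}$ need not be open. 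The ``cleaner route'' with a large $c$ and the $K_\eps$ construction are fine as far as they go, but, as you say, they only yield the estimate with the wrong constant.

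The idea you are missing is to tie the constant to the \emph{initial} slice rather than to $u^*$: set $c := \max_{x \in \overline{G}} \bigl( u(0,x) - h(x) \bigr)$, which is attained because $u(0,\cdot)$ and $h$ are continuous. If $c > 0$, the maximum point $x_0$ lies in $G$ (on $\partial G$ one has $u(0,\cdot) \leq u^* \leq h$), and $h + c$ dominates $u$ on the whole parabolic boundary, so the assumed comparison property gives $u(t,x_0) \leq h(x_0) + c = u(0,x_0)$ for all $t \in (0,T)$; taking the supremum in $t$ yields $u^*(x_0) \leq u(0,x_0)$, contradicting the strict gap hypothesis at $x_0 \in E$. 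Hence $c \leq 0$, i.e.\ $u(0,\cdot) \leq h$ on $\overline{G}$, and a second application of the comparison property, now with constant $0$, gives $u \leq h$ on $(0,T) \times G$ and therefore $u^* \leq h$ on $G$. Note that the strict gap is used only once, pointwise at the maximizer $x_0$, and no quantitative or strict comparison for $L$ is ever needed — which is precisely what makes the lemma work at this level of generality.
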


\begin{proof}
Let $G \Subset E$ be any open set. Let $h \in C(\overline{G})$ be a continuous solution to 
\[ L h = 0  \]
in $G$ with $h \geq u^{*}$ on $\partial G$. 

Suppose, for contradiction, that 
\[c: = \max_{x \in G} (   u(0,x)  - h(x) ) > 0.  \]
Note that the maximum $c$ is finite and achieved in the interior since $h(x) \ge u^{*}(x) \geq u(0,x) $ for $x \in \partial G$ and $u$ and $h$ are continuous. Let $x_0 \in G$ be such that 
\[c =   u(0,x_0)  - h(x_0).\]
Consider now the function $h_c = h + c$. By choice of $c$, we have that $u(0,x) \le h_c(x)$ for $x \in G$, and by the counter-assumption $u \leq h_c$ also on $[0,T) \times \partial G$. 
It follows from the assumption on $u$ that $u \le h + c$ in $(0,T) \times G$. Thus, by the definition of $c$ and $x_0$, we have for all $t \in (0,T)$ that
\[u(t,x_0) \le h(x_0) + c =  u (0, x_0).\]
Taking the supremum in $t$, we obtain $u^{*}(x_0) \le u(0,x)$, which violates $x_0 \in G  \subset E$. This contradiction implies $c \le 0$. 

Consequently, we have $u(0,x) \le h(x)$ for $x \in \overline{G}$, and further 
$u(t,x) \leq h(x)$ for all $(t,x) \in [0,T) \times \partial G$ by the choice of $h$. The assumed comparison property of $u$ yields $u(t,x) \leq h(x)$ for all $(t,x) \in (0,T) \times G$. Taking the supremum in $t$ implies $u^{*}(x) \leq h(x)$ for all $x \in G$ as claimed. 
\end{proof}

We use this lemma to give the
\begin{proof}[Proof of Theorem~\ref{thmintro:1}]
Let $T > 1$ and define the truncated maximal function
\[ S^{*,T} f(x) := \sup_{0< t < T} S_t f(x) . \] 

\noindent \emph{Step 1.} We first assume that $f = S_{1/T} g$ for some non-negative, bounded and compactly supported function $g \in L^{2}(\rn) \cap \dot{W}^{1,p}(\rn)$. In particular, $f$ is continuous. Our goal is to prove
\begin{equation}
\label{eq:plaplace_goal}
\int_{\rn}  | \nabla S^{*,T} f(x)|^{p} \, dx \leq \int_{\rn}  | \nabla f|^{p} \, dx.
\end{equation}
Let $E = \{ x \in \rn : S^{*,T} f(x) > f(x) \}$. 
In view of Proposition \ref{prop:Lipschitz}, $S^{*,T} f$ is the pointwise supremum of functions that satisfy a local Lipschitz condition uniformly in $t$. Hence, $S^{*,T}f$ satisfies a local Lipschitz condition and we conclude that $E$ is open. By finite speed of propagation (Proposition \ref{prop:finitespeed}), $S^{*,T} f = 0 $ outside of a bounded set, and hence $E$ is bounded. It also follows from the local Lipschitz condition that $S^{*,T}f \in W_{loc}^{1,\infty}(\rn)$. 

On ${}^c E$ we have $S^{*,T}f = f$ and hence $\nabla S^{*,T}f = \nabla f$ almost everywhere, see Corollary~1.21 in \cite{Heinonen2018}. Consequently, it suffices to show
\begin{equation*}
\int_{E} | \nabla S^{*,T} f(x)|^{p} \, dx \leq \int_{E} | \nabla f|^{p} \, dx .
\end{equation*}
To this end let $u(t,x) := S_t f(x)$ and $L := \Delta_p$. As $Lh = 0$ implies $(\partial_t - \Delta_p)(h+c) = 0$ for every $c \in \re$, the $p$-parabolic comparison principle from Lemma~\ref{lemma:p-parab_comparison} guarantees that $u$ satisfies the assumptions of Lemma~\ref{lemma:abstract_comparison}. Since $E$ is open and as $S^{*,T} f > f $ holds in $E$, it follows that $S^{*,T}$ satisfies the $p$-harmonic comparison principle in the sense of Definition~\ref{def:comparison} in $E$. By Lemma \ref{lemma:comparison_submin}, it is a $p$-subminimizer
in all bounded open subsets of $E$. Let $\{G_i\}$ be an exhaustion of $E$ by bounded open sets. For any non-negative $\varphi \in C_c^{\infty}(E)$, there is $G_i$ such that $\supp \varphi \subset G_i$. Hence, by the monotone convergence theorem
\begin{align}
\label{eq:submin-exhaustion-argument}
\begin{split}
\int_{E} | \nabla S^{*,T}f  |^{p} \, dx 
	&= \lim_{i \to \infty} \int_{G_i} | \nabla S^{*,T}f  |^{p} \, dx \\
	&\leq \lim_{i \to \infty} \int_{G_i} | \nabla (S^{*,T}f - \varphi )  |^{p} \, dx \\
	&= \int_{E} | \nabla (S^{*,T} f - \varphi ) |^{p} \, dx.
\end{split}
\end{align}
Plugging in a sequence of non-negative $\varphi$ with $\varphi \to  S^{*,T} f - f$ in $W^{1,p}(E)$-norm, we conclude inequality \eqref{eq:plaplace_goal}. Such an approximation is possible because we have $S^{*,T} f - f \in W^{1,p}(E)$, since $E$ is bounded, and $S^{*,T}f-f$ is continuous up to the boundary with zero boundary values (Lemma 1.26 in \cite{Heinonen2018}).

\medskip

\noindent \emph{Step 2.} Take now an arbitrary non-negative $f \in L^2(\rn) \cap \dot{W}^{1,p}(\rn)$ and let $(f_k)$ be an approximating sequence of bounded functions with compact support (Lemma~\ref{lem:properties-V}). Upon replacing $f_k$ with $\min(f_k,0)$, we can assume that the $f_k$ are non-negative. Inequality \eqref{eq:plaplace_goal} together with \eqref{energyeq.eq2} implies
\begin{align*}
 \| \nabla S^{*,T} S_{1/T}f_k  \|_{L^{p}(\rn)} 
\leq \| \nabla S_{1/T} f_k \|_{L^{p}(\rn)} \leq \| \nabla f_k \|_{L^{p}(\rn)} .
\end{align*}
The right-hand side converges to $\| \nabla f \|_{L^{p}(\rn)}$ as $k \to \infty$, so $\{\nabla S^{*,T} S_{1/T}f_k : k \geq 0\}$ is bounded in $L^{p} (\rn)$. By reflexivity, we can extract a subsequence so that 
\begin{align*}
\nabla S^{*,T} S_{1/T}f_k  \to   G, \text { weakly in } L^{p}(\rn). 
\end{align*}
By Proposition \ref{prop:convergence_of_data}, we can extract a further subsequence, still indexed along $k$, such that $S_t f_k(x) \to S_t f(x)$ locally uniformly in $(0,\infty) \times \rn$. We have for all $R > 0$,
\[ \sup_{x \in B(0,R)}  |S^{*,T} S_{1/T}  f_k(x) -  S^{*,T} S_{1/T} f(x) | \leq  \sup_{x \in B(0,R)} \sup_{ 1/T< t < T + 1/T} | S_t f_k(x) - S_t f(x) |, \]
so that $S^{*,T} S_{1/T}  f_k \to S^{*,T} S_{1/T}  f$ uniformly on compact sets. It follows from the dominated convergence theorem that $S^{*,T} S_{1/T}  f_k \to S^{*,T} S_{1/T}  f$ in the sense of distributions, and hence $G$ must be the weak gradient of $S^{*,T} S_{1/T}  f$. By weak lower semicontinuity of the $L^{p}$-norm, we obtain
\begin{equation}
\label{eq:p-laplace_intermediate1}
\| \nabla S^{*,T} S_{1/T}f   \|_{L^{p}(\rn)} 
\leq \liminf_{k \to \infty} \| \nabla  S^{*,T} S_{1/T} f_k \|_{L^{p}(\rn)} 
\leq \| \nabla f  \|_{L^{p}(\rn)} .
\end{equation} 

\noindent \emph{Step 3.} It remains to take a limit in $T$. By reflexivity, \eqref{eq:p-laplace_intermediate1} allows us to extract a sequence $T_j \to \infty$ such that
\begin{align*}
\nabla S^{*,T_j} S_{1/T_j}f \to   G', \text { weakly in } L^{p}(\rn).
\end{align*}
By definition, we have monotone convergence of $S^{*,T_j} S_{1/T_j} f(x) \to S^* f(x)$ for a.e.\ $x \in \rn$. Suppose we already knew that $S^* f$ was locally integrable. Then the dominated convergence theorem implies $S^{*,T_j} S_{1/T_j}f \to S^* f$ in the sense of distributions and we conclude that $G'$ is the weak gradient of $S^*f$, whereupon weak lower semicontinuity of the $L^{p}$-norm and \eqref{eq:p-laplace_intermediate1} yield the claim
\begin{align*}
\|\nabla S^*f \|_{L^p(\rn)} 
\leq \liminf_{T_j \to \infty} \| \nabla S^{*,T_j} S_{1/T_j}f \|_{L^p(\rn)}
\leq \| \nabla f  \|_{L^{p}(\rn)}.
\end{align*}

\noindent \emph{Step 4.} This being said, we fix a ball $B(0,R)$ and show that the average $(S^*f)_{B(0,R)}$ is finite. We recall from \eqref{eq:S*finite-ae} that $S^* f$ is almost everywhere finite. Hence, we can find $M < \infty$ such that
\begin{align}
\label{eq:p-laplace-Step3}
|\{ x \in B(0,R):  S_* f(x) > M \} | \leq \frac{1}{2} |B(0,R)| .
\end{align}
It follows from Poincar\'e's and Chebyshev's inequalities that for any $\lambda > 0$ and any $u \in \dot{W}^{1,p}(\mathbb{R}^{n})$ we have
\begin{align*}
| \{x \in B(0,R) : | u - u_{B(0,R)} | > \lambda     \}|
\leq \frac{C_n^pR^p}{\lambda^p} \int_{\rn } |\nabla u |^{p} \, dx
\end{align*}
for a dimensional constant $C_n>0$.
Choosing $\lambda = 4^{1/p}C_n R |B(0,R)|^{1/p}\|\nabla f\|_{L^p(\rn)}$,
we obtain in combination with \eqref{eq:p-laplace_intermediate1} for all $j$ the bound
\[ \big|\big\{x \in B(0,R) : | S^{*,T_j} S_{1/T_j} f - (S^{*,T_j} S_{1/T_j} f)_{B(0,R)} | > \lambda    \big \} \big| \leq \frac{1}{4} |B(0,R)|.  \]
In particular, we can find points $x_j \in B(0,R)$ that neither satisfy this condition nor the one in \eqref{eq:p-laplace-Step3}. This means that
\[(S^{*,T_j} S_{1/T_j} f)_{B(0,R)} \leq \lambda + S^{*,T_j} S_{1/T_j} f(x_j) \leq \lambda+ S^* f(x_j) \leq \lambda + M .\]
Monotone convergence yields $(S_* f)_{B(0,R)} \leq \lambda + M$ and the proof is complete.
\end{proof}

\section{Quadratic energies and related semigroups}
\label{sec:semigroups}

\noindent We consider quadratic energies with kernel $F(x,\xi) = \frac{1}{2}A(x) \xi \cdot \xi$, where $A: \mathbb{R}^{n} \to \mathbb{R}^{n \times n}$ is measurable, symmetric and satisfies
\[\Lambda^{-1} |\xi|^{2} \leq A(x) \xi \cdot \xi \quad \text{and} \quad |A(x) \xi| \leq \Lambda |\xi| \]
for some constant $\Lambda \in (0,\infty)$ and for all $x, \xi \in \mathbb{R}^{n}$. The corresponding operator $L: W^{1,2}_{loc}(\rn) \to \mathcal{D}'(\rn)$ as in Section~\ref{sec:energy} is the linear, uniformly elliptic divergence form operator $ L = \div_x( A(x) \nabla \cdot )$. For such operators the Cauchy problem
\begin{eqnarray}
\label{eq:Cauchy-A}
\begin{split}
\dot u - L u &=0& \qquad &\text{in } (0,\infty) \times \rn \\
u|_{t=0} &=f& \qquad &\text{in } \rn
\end{split}
\end{eqnarray}
can be solved in the strong sense via the theory of $C_0$-semigroups. We assume basic familiarity with this topic and refer to \cite{Ouhabaz, Haase} for background. To be precise, the maximal restriction of $L$ to an operator in $L^2(\rn)$ with domain $D(L) \subset W^{1,2}(\rn)$ is self-adjoint and generates the \emph{heat semigroup} $(H_t)_{t \geq 0} := (e^{tL})_{t\geq 0}$. 

This is a bounded analytic $C_0$-semigroup on $L^2(\rn)$. For any semigroup $(T_t)_{t\geq 0}$ this terminology means that, given  $f \in L^2(\rn)$, the extension $u(t,x):= T_tf(x)$ to the upper half space has regularity
\begin{align*}
u \in C([0,\infty); L^2(\rn)) \cap C^\infty((0,\infty); L^2(\rn)),
\end{align*} 
satisfies $u(0)=f$, and for every integer $k \geq 0$ there is a constant $C_k$ such that
\begin{align}
\label{eq:analyticity}
 \sup_{t \geq 0} t^k\|\partial_t^k u\|_{L^2(\rn)} \leq C_k \|f\|_{L^2(\rn)}.
\end{align}
In the case of the heat semigroup, $C_k$ depends on $k$ and the \emph{ellipticity parameter} $\Lambda$. 

The heat extension $u(t,x) := H_t f(x)$ satisfies $\partial_t^k u = L^k u$ and in particular it is a weak solution to $\dot{u} - Lu = 0$ in the upper half space in the sense of Section~\ref{subsec:Aparabolic equation}. This extension is given by a heat kernel in the following sense.

\begin{proposition}[Theorem~6.10 in \cite{Ouhabaz}]
\label{prop:heatkernel}
Let $f \in L^2(\rn)$. The operators $H_t$, $t>0$, are given by a kernel $K_{t,L}(x,y)$, measurable in $(t,x,y)$, via
\begin{align}
\label{eq:heat-kernel-representation}
H_t f(x) = \int_{\rn} K_{t,L}(x,y) f(y) \, d y \qquad (\text{a.e. } x \in \rn).
\end{align}
There are constants $c,C \in (0,\infty)$ depending only on $n$ and $\Lambda$ such that
\begin{align*}
0 \le K_{t,L}(x,y) \le C t^{-n/2} e^{-c|x-y|^2/t} \qquad (x,y \in \rn).
\end{align*} 
\end{proposition}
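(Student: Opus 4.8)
The statement is the classical Aronson Gaussian upper bound, and the plan is to reproduce the Nash--Davies argument in three stages: identify $(H_t)$ as a sub-Markovian semigroup, upgrade $L^2$-boundedness to $L^1\to L^\infty$ ultracontractivity by a Nash inequality (which already produces the kernel and the on-diagonal bound), and finally insert Davies' exponential weights to convert the on-diagonal bound into the Gaussian off-diagonal bound.

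\emph{Step 1 (sub-Markovianity).} The form $a(u,v)=\int_{\rn}A\nabla u\cdot\nabla v\,dx$ on $W^{1,2}(\rn)$ is a symmetric Dirichlet form: since $A$ is real, symmetric and elliptic, the Beurling--Deny conditions $a(u^+,u^-)\le 0$ and $a(u\wedge 1,(u-1)^+)\ge 0$ can be checked directly, so $(H_t)$ is positivity preserving and an $L^\infty$-contraction; together with self-adjointness and interpolation, $(H_t)$ is a contraction on every $L^q(\rn)$, $1\le q\le\infty$. This gives $0\le K_{t,L}$ once the kernel exists.

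\emph{Step 2 (ultracontractivity and the kernel).} Combining the Nash inequality $\|v\|_2^{2+4/n}\le C_n\|\nabla v\|_2^2\|v\|_1^{4/n}$ with ellipticity yields $\|v\|_2^{2+4/n}\le C_n\Lambda\,a(v,v)\,\|v\|_1^{4/n}$. Feeding this into the standard differential inequality for $\phi(t):=\|H_tf\|_2^2$ with $\|f\|_1=1$, namely $\phi'(t)=-2a(H_tf,H_tf)\le -c(n,\Lambda)\,\phi(t)^{1+2/n}$, gives $\|H_t\|_{1\to 2}\le Ct^{-n/4}$, whence $\|H_t\|_{1\to\infty}\le\|H_{t/2}\|_{1\to 2}^2\le Ct^{-n/2}$ by self-adjointness and the semigroup law. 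By the Dunford--Pettis theorem each $H_t$ is then an integral operator with a kernel $K_{t,L}\in L^\infty(\rn\times\rn)$, $\|K_{t,L}\|_\infty\le Ct^{-n/2}$. Joint measurability in $(t,x,y)$ follows from the reproducing identity $K_{t,L}(x,y)=\int_{\rn}K_{t/2,L}(x,z)K_{t/2,L}(z,y)\,dz$ combined with strong continuity of $t\mapsto H_t$ on $L^2(\rn)$.

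\emph{Step 3 (Gaussian decay by exponential perturbation).} Fix a bounded Lipschitz function $\phi$ with $|\nabla\phi|\le 1$ and $\rho\in\re$, and consider the twisted semigroup $H_t^\rho:=e^{-\rho\phi}H_te^{\rho\phi}$, associated with the form $a_\rho(u,v)=a(u,v)+\rho\int_{\rn}A\nabla\phi\cdot(u\nabla v-v\nabla u)\,dx-\rho^2\int_{\rn}(A\nabla\phi\cdot\nabla\phi)uv\,dx$. A Gronwall estimate for $\tfrac{d}{dt}\|H_t^\rho f\|_2^2$, using ellipticity, $|\nabla\phi|\le 1$ and Young's inequality on the first-order term, gives $\|H_t^\rho\|_{2\to 2}\le e^{c\Lambda\rho^2 t}$; re-running the Step 2 argument for $a_\rho$ (the first-order contribution is again absorbed at the cost of the same $\rho^2$ factor) gives $\|H_t^\rho\|_{1\to\infty}\le Ct^{-n/2}e^{c\Lambda\rho^2 t}$. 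Since $H_t^\rho$ has kernel $e^{-\rho\phi(x)}K_{t,L}(x,y)e^{\rho\phi(y)}$, this reads $K_{t,L}(x,y)\le Ct^{-n/2}e^{c\Lambda\rho^2 t}e^{\rho(\phi(x)-\phi(y))}$. Taking $\phi$ a bounded regularization of $z\mapsto-|z-y|$ and optimizing over $\rho$ (roughly $\rho\sim|x-y|/(c\Lambda t)$) yields $K_{t,L}(x,y)\le C't^{-n/2}e^{-c'|x-y|^2/t}$ with $c',C'$ depending only on $n$ and $\Lambda$.

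\emph{Main obstacle.} The delicate point is Step 3: the perturbed form $a_\rho$ is neither symmetric nor lower-order--free, so one must verify that it still generates a semigroup with the stated $L^2$ growth and that the Nash/De Giorgi iteration is robust to the perturbation, with the first-order term controlled purely in terms of $\rho^2 t$. This is exactly where boundedness and ellipticity of $A$ enter quantitatively and where the constant $c'$ in the final Gaussian is fixed; everything else is routine once the sub-Markov property and the Nash inequality on $\rn$ are available. (An equivalent route replaces Step 3 by the Davies--Gaffney $L^2$ off-diagonal estimate and its self-improvement, but the analytic core is the same.)
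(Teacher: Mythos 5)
The paper offers no proof of this proposition at all: it is imported wholesale as Theorem~6.10 of Ouhabaz's book (Aronson's Gaussian upper bound), and your Nash--Davies outline (Beurling--Deny criteria for sub-Markovianity, Nash inequality plus Dunford--Pettis for ultracontractivity and existence of the kernel, Davies' exponential perturbation for the off-diagonal decay) is precisely the standard argument behind that cited result. So your proposal is correct and follows essentially the same route as the paper's source, the only unexecuted detail being the iteration for the twisted, non-sub-Markovian semigroup in Step~3, which you correctly single out as the technical core.
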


\begin{corollary}
\label{cor:boundedness heat extension}
Let $f \in L^2(\rn)$ and $t>0$. There is a constant $C$ depending on only $n$ and $\Lambda$ such that the following hold for a.e.\ $x \in \rn$:
\begin{enumerate}
	\item[\quad (i)]  $|H_tf(x)| \leq Ct^{-n/4} \|f\|_{L^2(\rn)}$;
	\item[\quad (ii)] $|H_tf(x)| \leq C Mf(x)$, where $M$ is the Hardy-Littlewood maximal function.
\end{enumerate}
\end{corollary}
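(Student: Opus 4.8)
The plan is to derive both bounds directly from the Gaussian kernel estimate of Proposition~\ref{prop:heatkernel} and the representation \eqref{eq:heat-kernel-representation}, which both hold for a.e.\ $x\in\rn$ once $t>0$ is fixed. For part~(i), I would write
\[
|H_tf(x)| \le \int_{\rn} K_{t,L}(x,y)\,|f(y)|\,dy \le C t^{-n/2}\int_{\rn} e^{-c|x-y|^2/t}\,|f(y)|\,dy,
\]
and then apply the Cauchy--Schwarz inequality in the second factor. The $L^2$-norm of $y\mapsto e^{-c|x-y|^2/t}$ is computed by the change of variables $z=(x-y)/\sqrt t$, giving a factor $t^{n/4}$ times the finite constant $\big(\int_{\rn} e^{-2c|z|^2}\,dz\big)^{1/2}$, which depends only on $n$ and $c$ (hence only on $n$ and $\Lambda$). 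Combining, $|H_tf(x)| \le C' t^{-n/2} t^{n/4}\|f\|_{L^2(\rn)} = C' t^{-n/4}\|f\|_{L^2(\rn)}$, which is~(i).

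For part~(ii), the standard route is to dominate the Gaussian by a sum over dyadic annuli. Split $\rn = B(x,\sqrt t)\cup\bigcup_{j\ge 1}\big(B(x,2^j\sqrt t)\setminus B(x,2^{j-1}\sqrt t)\big)$. On the annulus $2^{j-1}\sqrt t \le |x-y| < 2^j\sqrt t$ one has $e^{-c|x-y|^2/t}\le e^{-c4^{j-1}}$, so that
\[
t^{-n/2}\int_{2^{j-1}\sqrt t\le |x-y|<2^j\sqrt t} e^{-c|x-y|^2/t}|f(y)|\,dy
\le e^{-c4^{j-1}} t^{-n/2}\!\!\int_{B(x,2^j\sqrt t)}\!\!|f(y)|\,dy,
\]
and $t^{-n/2}\int_{B(x,2^j\sqrt t)}|f| = 2^{jn}|B(0,1)|\,\fint_{B(x,2^j\sqrt t)}|f|\le 2^{jn}|B(0,1)|\,Mf(x)$. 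Summing over $j\ge 0$ gives $|H_tf(x)| \le C\big(\sum_{j\ge 0} 2^{jn} e^{-c4^{j-1}}\big) Mf(x)$, and the series converges to a constant depending only on $n$ and $c$, hence on $n$ and $\Lambda$. This proves~(ii). (One should note the estimate is vacuous when $Mf(x)=\infty$, and otherwise the same computation shows the integral defining $H_tf(x)$ is absolutely convergent at a.e.\ $x$.)

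I do not anticipate any real obstacle here: both parts are textbook-style consequences of the Gaussian upper bound, the only mild points of care being that Proposition~\ref{prop:heatkernel} gives the kernel representation and the pointwise bound only almost everywhere in $x$, and that the dyadic-annulus estimate must be phrased so the constant is genuinely independent of $t$, $x$, and $f$ — which it is, since after rescaling by $\sqrt t$ everything reduces to the fixed convergent series $\sum_j 2^{jn}e^{-c4^{j-1}}$ and the fixed Gaussian integral $\int_{\rn}e^{-2c|z|^2}\,dz$.
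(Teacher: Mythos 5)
Your argument is correct and follows essentially the same route as the paper: part (i) by Cauchy--Schwarz against the Gaussian upper bound of Proposition~\ref{prop:heatkernel}, and part (ii) by decomposing into dyadic annuli of radius comparable to $\sqrt t$ and bounding each average by $Mf(x)$, with only a cosmetic difference in how the annuli are indexed. Nothing to fix.
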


\begin{proof}
The first item follows by applying the Cauchy--Schwarz inequality to \eqref{eq:heat-kernel-representation}. As for the second item, we split integration in \eqref{eq:heat-kernel-representation} into annuli $A_0:= B(x,2\sqrt{t})$ and $A_j := B(x, 2^{j+1}\sqrt{t}) \setminus B(x, 2^j \sqrt{t})$, $j \geq 1$, in order to obtain
\begin{align*}
H_tf(x) \leq \sum_{j=0}^\infty C |B(0,1)| 2^{(j+1)n} e^{-c (4^j-1)} f_{B(x,2^{j+1}\sqrt{t})}.
\end{align*}
Each average of $f$ is bounded by $Mf(x)$ and the remaining sum in $j$ converges.
\end{proof}

Item (i) of the preceding corollary guarantees the DeGiorgi property of Proposition~\ref{prop:DeGiorgi} for $u(t,x) = H_tf(x)$. This function can be redefined on a set of measure zero to become bounded and H\"older continuous on $(\eps,\infty) \times \rn$ for any $\eps > 0$. In fact, by dominated convergence, the continuous representative is the one given by \eqref{eq:heat-kernel-representation} and it is non-negative provided $f$ has this property.

In analogy with Section~\ref{sec:p_grad_flow}, we define a vertical heat maximal function.

\begin{definition}[Heat maximal function]
For non-negative $f \in L^2(\rn)$ define the heat maximal function as
\begin{align*}
H^*f(x) := \sup_{t>0} H_t f(x).
\end{align*}
\end{definition}

The strong continuity of the heat semigroup at $t=0$ and the Hardy--Littlewood maximal bound in Corollary~\ref{cor:boundedness heat extension} give 
\[0 \leq f(x) \leq H^*f(x) < \infty\]
for a.e.\ $x \in \rn$ and in fact we have $H^* f \in L^2(\rn)$. The latter property holds for more general diffusion semigroups (Chapter~III in~\cite{Stein}) but the Gaussian heat kernel bounds allowed us to give a particularly simple proof.

A self-adjoint generator of a bounded $C_0$-semigroup on $L^2(\rn)$, such as $L$, admits self-adjoint fractional powers $(-L)^\alpha$ for $\alpha \in (0,1)$. In particular, the square root operator $-(-L)^{1/2}$ in $L^2(\rn)$ generates an analytic $C_0$-semigroup given as an $L^2(\rn)$-valued integral by the \emph{subordination formula}
\begin{align}
\label{eq:subordination}
e^{-t (-L)^{1/2}}f = \int_0^\infty \frac{t e^{-t^2/(4s)}}{2 \sqrt{\pi} s^{3/2}} e^{sL}f \, ds,
\end{align}
whenever $f \in L^2(\rn)$ and $t>0$. We refer to Section~3 and Example~3.4.6 in \cite{Haase}. We call $P_t := e^{-t (-L)^{1/2}}$ the \emph{Poisson semigroup} for $L$. 

The Poisson extension $u(t,x):= P_tf(x)$ of $f \in L^2(\rn)$ has regularity as in \eqref{eq:analyticity} and satisfies 
\begin{eqnarray*}
\begin{split}
(\partial_t^2 + L) u &=& 0 \qquad &\text{in } (0,\infty) \times \rn \\
u|_{t=0} &=& f \qquad &\text{in } \rn
\end{split}
\end{eqnarray*}
This is an $\mathcal{A}$-harmonic equation in the sense of Section~\ref{subsec:Euler--Lagrange} in dimension $n+1$. In particular, $u$ is a (stationary) solution of an $\mathcal{A}$-parabolic equation in dimension $n+2$. The DeGiorgi property from Proposition~\ref{prop:DeGiorgi} is guaranteed by the following

\begin{lemma}
\label{lem:boundedness of poisson extension}
Let $f \in L^2(\rn)$ and $t>0$. There is a constant $C$ depending on $n$ and $\Lambda$ such that the following hold for a.e.\ $x \in \rn$:
\begin{enumerate}
	\item[\quad (i)]  $|P_tf(x)| \leq Ct^{-n/2} \|f\|_{L^2(\rn)}$;
	\item[\quad (ii)] $|P_tf(x)| \leq C Mf(x)$, where $M$ is the Hardy-Littlewood maximal function.
\end{enumerate}
\end{lemma}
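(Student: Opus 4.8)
The plan is to derive both bounds directly from the subordination formula \eqref{eq:subordination} by reducing everything to the corresponding estimates for the heat semigroup in Corollary~\ref{cor:boundedness heat extension}. The subordination weight $\rho_t(s) := \tfrac{t e^{-t^2/(4s)}}{2\sqrt{\pi}s^{3/2}}$ is a probability density in $s$ on $(0,\infty)$ for each fixed $t>0$; this normalization is the only nontrivial fact about it that I need, and it is classical (it is exactly the statement that $e^{-t(-L)^{1/2}}\mathbf{1}=\mathbf{1}$ formally, or can be checked by the substitution $s = t^2/(4\sigma)$). Once this is in hand, item (ii) is immediate: since $e^{sL}f(x) \le CMf(x)$ for a.e.\ $x$ by Corollary~\ref{cor:boundedness heat extension}(ii) with a constant independent of $s$, integrating against $\rho_t(s)\,ds$ and using total mass one gives $P_tf(x) = \int_0^\infty \rho_t(s)\, e^{sL}f(x)\,ds \le CMf(x)$ for a.e.\ $x$. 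One should note that the pointwise a.e.\ inequality survives the integral because the Bochner integral \eqref{eq:subordination} also represents $P_tf$ pointwise a.e.\ after passing to the kernel representations, so one may work with the nonnegative integrand pointwise; alternatively, apply the heat kernel bound from Proposition~\ref{prop:heatkernel} inside the integral to get a Poisson kernel and estimate that directly.

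For item (i) the heat bound $|e^{sL}f(x)| \le C s^{-n/4}\|f\|_{L^2(\rn)}$ from Corollary~\ref{cor:boundedness heat extension}(i) is not uniform in $s$, so I instead compute $\int_0^\infty \rho_t(s)\, s^{-n/4}\,ds$ explicitly. Substituting $s = t^2/(4\sigma)$ turns this into a Gamma-type integral: one finds $\int_0^\infty \rho_t(s) s^{-n/4}\,ds = c_n\, t^{-n/2}$ for a constant $c_n$ depending only on $n$ (and finite for every $n \ge 1$, since the Gaussian tail makes the integral converge at both endpoints). Multiplying by $C\|f\|_{L^2(\rn)}$ yields $|P_tf(x)| \le C t^{-n/2}\|f\|_{L^2(\rn)}$, which is the claimed estimate; the scaling $t^{-n/2}$ (as opposed to $t^{-n/4}$) is the expected one for the Poisson kernel in $\rn$, so this is a good consistency check. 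Again the a.e.\ pointwise statement follows because the subordination integral represents $P_tf$ pointwise a.e.

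The only real obstacle is a minor technical one: justifying that \eqref{eq:subordination}, which a priori is an identity of $L^2(\rn)$-valued Bochner integrals, can be evaluated \emph{pointwise a.e.}\ so that the pointwise heat bounds may be fed in under the integral sign. The cleanest route is to insert the heat kernel representation \eqref{eq:heat-kernel-representation} into \eqref{eq:subordination} and invoke Fubini (legitimate by nonnegativity of $K_{s,L}$ and $\rho_t$ when $f \ge 0$, and by the Gaussian upper bound together with $f \in L^2 \subset L^1_{loc}$ in general) to obtain a genuine Poisson kernel $P_{t,L}(x,y) = \int_0^\infty \rho_t(s) K_{s,L}(x,y)\,ds$, measurable in $(t,x,y)$, with $P_tf(x) = \int_{\rn} P_{t,L}(x,y) f(y)\,dy$ a.e. All subsequent estimates are then honest pointwise estimates on this kernel: the Gaussian bound on $K_{s,L}$ integrated against $\rho_t$ gives a pointwise bound on $P_{t,L}$ of the right (Poisson-type) size, from which (i) follows by Cauchy--Schwarz and (ii) by the annular decomposition exactly as in the proof of Corollary~\ref{cor:boundedness heat extension}. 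I would present the argument via this kernel, since it makes both items fall out of a single pointwise kernel estimate and avoids any separate measurability bookkeeping.
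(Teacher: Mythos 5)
Your proposal is correct and follows essentially the same route as the paper, which simply inserts the heat bounds of Corollary~\ref{cor:boundedness heat extension} into the subordination formula \eqref{eq:subordination} and evaluates the $s$-integral by the substitution $r=t^2/s$ (your Gamma-integral computation and the total-mass-one observation are exactly this). The extra care you take to justify the pointwise a.e.\ evaluation via the kernel $P_{t,L}(x,y)=\int_0^\infty \rho_t(s)K_{s,L}(x,y)\,ds$ is a sound (and slightly more detailed) way to make rigorous a step the paper leaves implicit, but it is not a different argument.
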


\begin{proof}
Simply use the bounds provided by Corollary~\ref{cor:boundedness heat extension} on the right-hand side of \eqref{eq:subordination} and calculate the integral in $s$ by a change of variable $r=t^2/s$.
\end{proof}

As usual, the continuity of the Poisson extension in $(t,x)$ allows us to define a corresponding vertical maximal function.

\begin{definition}[Poisson maximal function]
For non-negative $f \in L^2(\rn)$ define the Poisson maximal function as
\begin{align*}
P^*f(x) := \sup_{t>0} P_t f(x).
\end{align*}
\end{definition}

As in the case of the heat maximal function, we obtain from Lemma~\ref{lem:boundedness of poisson extension} that $P^*f \in L^2(\rn)$ and together with the strong continuity at $t=0$ we infer
\[0 \leq f(x) \leq P^*f(x)<\infty.\] 
for a.e.\ $x \in \rn$.
 
We close with certain generic operator estimates for the heat and Poisson semigroups for $L$ that will turn out useful in the next section.
\begin{lemma}
\label{lem:operator bounds}
Let $f \in L^2(\rn)$ and $t>0$. For every integer $k \geq 0$ there is a constant $C$ depending only on $k$, $n$ and $\Lambda$ such that
\begin{align*}
t^{1+2k} \| \partial_t^k \nabla  H_t f\|_{L^2(\rn)}^2 + t^{2+2k}  \|\partial_t^k \nabla  P_t f\|_{L^2(\rn)}^2 \leq C \|f\|_{L^2(\rn)}^2.
\end{align*}
\end{lemma}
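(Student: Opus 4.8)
The plan is to prove the estimate
\[
t^{1+2k}\|\partial_t^k \nabla H_t f\|_{L^2(\rn)}^2 + t^{2+2k}\|\partial_t^k \nabla P_t f\|_{L^2(\rn)}^2 \leq C\|f\|_{L^2(\rn)}^2
\]
separately for the heat and Poisson parts, using the spectral calculus for the self-adjoint operator $-L \geq 0$ together with the coercivity of the form associated with $L$. First I would record the key form identity: for $g \in D(L)$ one has, by symmetry and ellipticity,
\[
\Lambda^{-1}\|\nabla g\|_{L^2(\rn)}^2 \leq -\langle Lg, g\rangle = \langle (-L)^{1/2} g,(-L)^{1/2} g\rangle = \|(-L)^{1/2}g\|_{L^2(\rn)}^2.
\]
So controlling $\|\nabla(\cdot)\|_{L^2}$ is reduced, up to the constant $\Lambda$, to controlling $\|(-L)^{1/2}(\cdot)\|_{L^2}$, which is purely a spectral quantity.

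For the heat part, set $g = \partial_t^k H_t f = L^k H_t f$. Then
\[
\|\nabla \partial_t^k H_t f\|_{L^2}^2 \leq \Lambda\|(-L)^{1/2}L^k e^{tL} f\|_{L^2}^2 = \Lambda \int_{[0,\infty)} \lambda^{2k+1} e^{-2t\lambda}\, d\langle E_\lambda f, f\rangle,
\]
where $E_\lambda$ is the spectral resolution of $-L$. Since $\sup_{\lambda \geq 0}\lambda^{2k+1}e^{-2t\lambda} = ((2k+1)/(2et))^{2k+1} \lesssim_k t^{-(2k+1)}$, the integral is bounded by $C_k t^{-(2k+1)}\|f\|_{L^2}^2$, which is exactly the claimed bound after multiplying by $t^{1+2k}$. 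For the Poisson part, set $g = \partial_t^k P_t f$; here $\partial_t e^{-t(-L)^{1/2}} = -(-L)^{1/2}e^{-t(-L)^{1/2}}$, so $\partial_t^k P_t f$ corresponds to the spectral multiplier $(-\lambda^{1/2})^k e^{-t\lambda^{1/2}}$ in the variable $\lambda$. Thus
\[
\|\nabla \partial_t^k P_t f\|_{L^2}^2 \leq \Lambda \int_{[0,\infty)} \lambda^{k+1} e^{-2t\lambda^{1/2}}\, d\langle E_\lambda f, f\rangle,
\]
and writing $\mu = \lambda^{1/2}$ the multiplier is $\mu^{2k+2}e^{-2t\mu}$ with $\sup_{\mu\geq 0}\mu^{2k+2}e^{-2t\mu} \lesssim_k t^{-(2k+2)}$, giving the bound $C_k t^{-(2k+2)}\|f\|_{L^2}^2$ and hence the desired estimate after multiplying by $t^{2+2k}$. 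Summing the two contributions yields the lemma.

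I do not expect a genuine obstacle here; the proof is essentially bookkeeping with the spectral theorem. The only points requiring a little care are: (a) justifying that $\partial_t^k H_t f$ and $\partial_t^k P_t f$ lie in the form domain $W^{1,2}(\rn)$ so that the coercivity inequality applies — this follows from analyticity of the semigroups, since for $t>0$ the range of $e^{tL}$ and of $e^{-t(-L)^{1/2}}$ is contained in $\bigcap_m D(L^m) \subset D((-L)^{1/2}) \subset W^{1,2}(\rn)$, and the time derivatives are again of this form; and (b) correctly identifying the spectral multipliers for the Poisson derivatives, where one should note $\partial_t^k e^{-t(-L)^{1/2}} = (-(-L)^{1/2})^k e^{-t(-L)^{1/2}}$ so the relevant function of $\lambda \in \sigma(-L)$ is $\lambda^{k/2} e^{-t\lambda^{1/2}}$. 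With these observations in place, the elementary calculus bounds $\sup_{s \geq 0} s^a e^{-bs} = (a/(eb))^a$ finish the argument, with all constants depending only on $k$, $n$, and $\Lambda$ (the latter entering through the coercivity constant).
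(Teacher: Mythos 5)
Your proof is correct. It shares the paper's first step --- coercivity of the form, $\Lambda^{-1}\|\nabla g\|_{L^2(\rn)}^2 \le -\langle Lg,g\rangle$ --- but finishes by a different mechanism: the paper estimates $-\langle Lu,u\rangle \le \|Lu\|_{L^2(\rn)}\|u\|_{L^2(\rn)}$ by Cauchy--Schwarz and then simply inserts the generic analytic-semigroup bounds \eqref{eq:analyticity} with $u=\partial_t^k H_tf$ (so that $Lu=\partial_t^{k+1}H_tf$) and $u=\partial_t^k P_tf$ (so that $Lu=-\partial_t^{k+2}P_tf$), which immediately gives $t^{-(2k+1)}$, resp.\ $t^{-(2k+2)}$, times $\|f\|_{L^2(\rn)}^2$; you instead write $-\langle Lg,g\rangle=\|(-L)^{1/2}g\|_{L^2(\rn)}^2$ and bound the spectral multipliers $\lambda^{2k+1}e^{-2t\lambda}$ and, after the substitution $\mu=\lambda^{1/2}$, $\mu^{2k+2}e^{-2t\mu}$ by their suprema $\bigl(a/(eb)\bigr)^a$. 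Your route is explicit and self-contained, and your identification of the Poisson multiplier and your remark on domains (for $t>0$ the ranges of $H_t$ and $P_t$ lie in $\bigcap_m D(L^m)$, so all time derivatives are in the form domain) are exactly the points that need checking; on the other hand it leans on self-adjointness of $L$ and the spectral theorem (available here because $A$ is symmetric), whereas the paper's version uses only the already-stated analyticity estimates and would transfer verbatim to any bounded analytic semigroup satisfying \eqref{eq:analyticity}. Both arguments yield constants depending only on $k$, $n$ and $\Lambda$.
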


\begin{proof}
By ellipticity, the definition of $L$ and the Cauchy--Schwarz inequality, we obtain for every $u \in D(L)$ that
\begin{align*}
\Lambda^{-1} \|\nabla u \|_{L^2(\rn)}^2 \leq \int_{\rn} A \nabla u \cdot \nabla u \, d x = \int_{\rn} -Lu \cdot u \, dx \leq \|Lu\|_{L^2(\rn)} \|u\|_{L^2(\rn)}.
\end{align*}
The claim follows by taking $u= \partial_t^ke^{tL}f$ and $u=\partial_t^k e^{-t(-L)^{1/2}}f$ and using the generic analytic semigroup bounds from \eqref{eq:analyticity} on the right-hand side.
\end{proof}

For the next lemma we recall the notion of the energy associated with the kernel $F(x,\xi) = \frac{1}{2}A(x) \xi \cdot \xi$ as in Section~\ref{sec:energy}:
\begin{align*}
\mathcal{F}(u) = \frac{1}{2} \int_{\rn} A \nabla u \cdot \nabla u \, dx.
\end{align*}

\begin{lemma}
\label{lem:semigroup energy dissipation}
Let $f \in W^{1,2}(\rn)$. The energies $\mathcal{F}(H_tf)$ and $\mathcal{F}(P_tf)$ are decreasing for $t \in [0,\infty)$.
\end{lemma}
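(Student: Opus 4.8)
The plan is to reduce both assertions to the spectral theorem for the self-adjoint operator $L$ on $L^2(\rn)$. The first step is to rewrite the energy as a spectral quantity. Since $A$ is symmetric, $L = \div(A\nabla \cdot)$ is precisely the self-adjoint operator associated with the closed, symmetric, non-negative form $(u,v) \mapsto \int_{\rn} A\nabla u\cdot\nabla v\,dx$ on the form domain $W^{1,2}(\rn)$; by Kato's representation theorem (Chapter~1 of \cite{Ouhabaz}) this gives $D\bigl((-L)^{1/2}\bigr) = W^{1,2}(\rn)$ together with
\[
\mathcal{F}(u) = \tfrac12 \int_{\rn} A\nabla u\cdot\nabla u\,dx = \tfrac12\bigl\|(-L)^{1/2}u\bigr\|_{L^2(\rn)}^2 \qquad (u \in W^{1,2}(\rn)).
\]
In particular $\mathcal{F}(f) < \infty$, and $g := (-L)^{1/2}f$ is a well-defined element of $L^2(\rn)$.

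Next I would commute the semigroups past $(-L)^{1/2}$. Both $e^{tL}$ and $e^{-t(-L)^{1/2}}$ are Borel functions of $L$, so they commute with $(-L)^{1/2}$, map $W^{1,2}(\rn) = D\bigl((-L)^{1/2}\bigr)$ into itself, and satisfy $(-L)^{1/2}H_tf = H_t g$ and $(-L)^{1/2}P_tf = P_t g$ for every $t \ge 0$. (For $t > 0$ this is also immediate from analyticity, which places $H_tf$ and $P_tf$ in $\bigcap_{k \ge 0} D(L^k)$; the case $t = 0$ is exactly the hypothesis $f \in W^{1,2}(\rn)$.) Plugging into the identity above yields $\mathcal{F}(H_tf) = \tfrac12\|H_t g\|_{L^2(\rn)}^2$ and $\mathcal{F}(P_tf) = \tfrac12\|P_t g\|_{L^2(\rn)}^2$ for all $t \ge 0$.

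The last step is a one-line spectral computation. Writing $-L = \int_{[0,\infty)}\lambda\,dE_\lambda$ for the spectral resolution, one has
\[
\|H_t g\|_{L^2(\rn)}^2 = \int_{[0,\infty)} e^{-2t\lambda}\,d\langle E_\lambda g, g\rangle, \qquad \|P_t g\|_{L^2(\rn)}^2 = \int_{[0,\infty)} e^{-2t\sqrt{\lambda}}\,d\langle E_\lambda g, g\rangle,
\]
and since $t \mapsto e^{-2t\lambda}$ and $t \mapsto e^{-2t\sqrt{\lambda}}$ are non-increasing for each fixed $\lambda \ge 0$, so are both integrals. This is the assertion of the lemma; in particular $\mathcal{F}(H_tf) \le \mathcal{F}(f)$ and $\mathcal{F}(P_tf) \le \mathcal{F}(f)$.

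I expect the only genuinely delicate point to be the first step, namely that for symmetric $A$ the form domain of $L$ is $W^{1,2}(\rn)$ and the square-root norm reproduces the energy; for non-symmetric $A$ this is the Kato square root problem, but in the symmetric case it is classical form theory and needs no new input. If one prefers to avoid square roots, a more hands-on variant works: for $t > 0$ analyticity legitimizes differentiating $t \mapsto \mathcal{F}(H_tf) = \tfrac12\langle -LH_tf, H_tf\rangle$, which gives (using self-adjointness of $L$ and $\partial_t H_tf = LH_tf$) $\frac{d}{dt}\mathcal{F}(H_tf) = \langle -LH_tf, \partial_t H_tf\rangle = -\|L H_tf\|_{L^2(\rn)}^2 \le 0$, and likewise $\frac{d}{dt}\mathcal{F}(P_tf) \le 0$; the remaining issue is then continuity as $t \to 0$, which again reduces to strong convergence $H_tf \to f$ in $W^{1,2}(\rn)$ and hence to the same form-domain facts.
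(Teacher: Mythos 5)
Your proposal is correct, but it takes a genuinely different route from the paper. The paper's proof is the ``hands-on variant'' you sketch at the end: using the smoothness of $t \mapsto H_tf$ in $W^{1,2}(\rn)$ (supplied by the preceding operator bounds of Lemma~\ref{lem:operator bounds}), it differentiates $\mathcal{F}(H_tf)$ for $t>0$, integrates by parts via the symmetry of $A$ to get $\frac{d}{dt}\mathcal{F}(H_tf) = -\|LH_tf\|_{L^2(\rn)}^2 \le 0$, and for the Poisson case writes $\frac{d}{dt}\mathcal{F}(P_tf) = -\|(-L)^{3/4}P_tf\|_{L^2(\rn)}^2 \le 0$ using self-adjointness of the fractional powers. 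Your main argument instead invokes the second representation theorem to identify $\mathcal{F}(u) = \tfrac12\|(-L)^{1/2}u\|_{L^2(\rn)}^2$ on the form domain $W^{1,2}(\rn)$ (legitimate here precisely because $A$ is symmetric, as you note; no Kato square root problem is involved), commutes the semigroups past $(-L)^{1/2}$, and reduces everything to the scalar monotonicity of $e^{-2t\lambda}$ and $e^{-2t\sqrt{\lambda}}$ under the spectral measure. What your route buys: it needs no smoothness-in-$t$ input at all, and it gives monotonicity on all of $[0,\infty)$ in one stroke, whereas the differentiation argument strictly yields monotonicity on $(0,\infty)$ and tacitly relies on continuity of the energy at $t=0$ (equivalently, $H_t(-L)^{1/2}f \to (-L)^{1/2}f$ in $L^2(\rn)$), a point you correctly flag and the paper passes over. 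What the paper's route buys: it stays within the tools already set up (the analytic semigroup estimates) and keeps the Lyapunov ``energy dissipation along the flow'' mechanism visible, which is the unifying theme of the article; it also avoids importing the form representation theorem as an extra ingredient. Both arguments hinge on self-adjointness, so neither is more general in that respect.
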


\begin{proof}
By the preceding lemma, the map $t \mapsto H_t f$ is smooth with values in $W^{1,2}(\rn)$. Hence, we can differentiate 
\begin{align*}
\frac{d}{dt} \mathcal{F}(H_t f)
= \int_{\rn} A \nabla H_tf  \cdot \nabla L H_tf \, dx
= \int_{\rn} -L H_tf  \cdot L H_tf \, dx
\leq 0,
\end{align*}
where we have used symmetry of $A$ in the first step. Likewise, we get
\begin{align*}
\frac{d}{dt} \mathcal{F}(P_t f)
= \int_{\rn} L P_tf  \cdot (-L)^{1/2} P_tf \, dx
= - \int_{\rn} (-L)^{3/4} P_tf   \cdot (-L)^{3/4} P_tf \, d x 
\leq 0,
\end{align*}
where we have decomposed $-L = (-L)^{1/4} (-L)^{3/4}$ and used self-adjointness of the fractional powers.
\end{proof}

\section{$A$-energy of the heat and Poisson maximal functions}
\label{sec:heat and poisson}

\noindent We need the following property of Sobolev spaces from \cite{Hajlasz1996}. The formulation we use is not exactly the same as in the reference, but a brief inspection of the proof shows that the following lemma is valid.

\begin{lemma}[{\cite[Theorem 1]{Hajlasz1996} }]
\label{lem:Hderv}
Let $p \in (1,\infty]$. If $f  \in L^{p}(\rn)$ and there exists a non-negative $g \in L^p(\rn)$ such that
\[|f(x) - f(y)| \le |x - y|(g(x) + g(y)) \qquad (\text{a.e. }  x, y \in \rn),\]
then $f \in W^{1,p}(\rn)$ and $\|\nabla f\|_{L^p} \le C_{n,p} \|g\|_{L^p}$. Conversely, if $f \in W^{1,p}(\rn)$, then the inequality above holds for all Lebesgue points and $g = M|\nabla f|$, where $M$ is the Hardy-Littlewood maximal function. 
\end{lemma}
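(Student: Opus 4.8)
The plan is to prove the two implications separately.

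\emph{From the pointwise inequality to $W^{1,p}$.} I would regularise by convolution. Fix $\phi \in C_c^\infty(B(0,1))$ with $\phi \ge 0$ and $\int_{\rn} \phi = 1$, put $\phi_t(x) = t^{-n}\phi(x/t)$ and $f_t := f * \phi_t$; since $f \in L^p(\rn)$, each $f_t$ is smooth. Because $\int_{\rn} \nabla \phi_t = 0$, for a.e.\ $x$ we may write $\nabla f_t(x) = \int_{B(x,t)} (f(y) - f(x)) \nabla \phi_t(x-y)\, dy$, and then the bounds $|\nabla \phi_t(x-y)| \le C_n t^{-n-1}$ on $B(x,t)$ together with the hypothesis $|f(y)-f(x)| \le |x-y|(g(x)+g(y)) \le t(g(x)+g(y))$ give
\[ |\nabla f_t(x)| \le C_n t^{-n} \Big( t^n g(x) + \int_{B(x,t)} g(y)\, dy\Big) \le C_n\big(g(x) + Mg(x)\big) \quad (\text{a.e. } x).\]
Here is where $p>1$ enters: by the strong $(p,p)$ boundedness of the Hardy--Littlewood maximal operator, $\|\nabla f_t\|_{L^p(\rn)} \le C_{n,p}\|g\|_{L^p(\rn)}$ uniformly in $t$ (for $p=\infty$ one simply has $Mg \le \|g\|_{\infty}$ a.e.).

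Next I would pass to the limit $t \to 0$. For $1<p<\infty$ the family $(f_t)$ is bounded in $W^{1,p}(\rn)$, so by reflexivity some sequence $f_{t_j}$ converges weakly in $W^{1,p}(\rn)$ to a limit $F$; since $f_t \to f$ in $L^p(\rn)$, we must have $F=f$, whence $f \in W^{1,p}(\rn)$ and, by weak lower semicontinuity of the norm, $\|\nabla f\|_{L^p(\rn)} \le \liminf_j \|\nabla f_{t_j}\|_{L^p(\rn)} \le C_{n,p}\|g\|_{L^p(\rn)}$. For $p=\infty$ the bound on $|\nabla f_t|$ shows the $f_t$ are uniformly Lipschitz and converge to $f$ locally in $L^1$, so $f$ agrees a.e.\ with a Lipschitz function and lies in $W^{1,\infty}(\rn)$ with the stated control. (Alternatively one could bypass mollification and use the classical difference-quotient characterisation of $W^{1,p}$ for $p>1$, since $\|f(\cdot+h) - f\|_{L^p(\rn)}/|h| \le 2\|g\|_{L^p(\rn)}$ by hypothesis, but the convolution argument hands back the explicit constant.)

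\emph{From $W^{1,p}$ to the pointwise inequality.} Writing $f_B$ for the average of $f$ over a ball $B$, I would first establish the standard pointwise bound at Lebesgue points: for a Lebesgue point $x$ of $f$ and any $r>0$,
\[ |f(x) - f_{B(x,r)}| \le C_n\, r\, M|\nabla f|(x),\]
obtained by telescoping $f(x) = \lim_{k\to\infty} f_{B(x,2^{-k}r)}$ and estimating each increment $|f_{B(x,2^{-k-1}r)} - f_{B(x,2^{-k}r)}|$ through the $(1,1)$-Poincar\'e inequality on $B(x,2^{-k}r)$, then summing the geometric series. Given two Lebesgue points $x,y$ with $r:=|x-y|$, both $B(x,r)$ and $B(y,r)$ lie in $B(x,2r)$ with comparable measure, so comparing $f_{B(x,r)}$ and $f_{B(y,r)}$ to $f_{B(x,2r)}$ via Poincar\'e again yields $|f_{B(x,r)} - f_{B(y,r)}| \le C_n r M|\nabla f|(x)$. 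Combining the three pieces produces $|f(x)-f(y)| \le C_n|x-y|\big(M|\nabla f|(x) + M|\nabla f|(y)\big)$, i.e.\ the asserted inequality with $g$ a dimensional multiple of $M|\nabla f|$ (which is the content of the reference up to the normalisation of the maximal function).

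\emph{Main obstacle.} The delicate point is the first implication, which genuinely requires $p>1$: turning the pointwise gradient estimate into a uniform $L^p$ bound rests on the maximal theorem, and one must additionally handle the endpoint $p=\infty$ separately and take care to identify the weak limit of the regularisations with $f$. The converse is softer, the only care needed being the bookkeeping of dimensional constants and the restriction to Lebesgue points.
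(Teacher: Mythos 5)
Your proof is correct, and it is essentially the standard argument behind the cited result: the paper does not prove this lemma at all but quotes it from \cite{Hajlasz1996}, and your mollification-plus-maximal-function argument for the first implication together with the telescoping/Poincar\'e chaining for the converse is exactly the proof of that theorem, including the correct handling of the endpoint $p=\infty$ and the identification of the weak limit of the regularisations with $f$. The only discrepancy is cosmetic: your converse produces $g = C_n\, M|\nabla f|$ rather than literally $g = M|\nabla f|$ as the paper states it, but this dimensional constant is harmless for, and absorbed in, every use of the lemma in the paper (e.g.\ the constant $C_n$ in the proof of Proposition~\ref{prop:qualitative_sobolev}), consistent with the paper's own caveat that its formulation differs slightly from the reference.
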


We use this lemma to prove qualitative Sobolev bounds for $H^*$ and $P^*$ away from the boundary. We recall from Section~\ref{sec:semigroups} that the maximal functions are bounded for the $L^2(\rn)$-norm.

\begin{proposition}
\label{prop:qualitative_sobolev}
Let $f \in W^{1,2}(\rn)$ and $\epsilon >0$. Then $H^{*} H_\epsilon f$ and $P^{*}  P_\epsilon f$ are weakly differentiable and there is a constant $C$ depending only on $n$ and $\Lambda$ such that
\[\| \nabla H^{*} H_\epsilon f \|_{L^{2}(\rn)} \leq  \frac{C}{\epsilon^{1/2}} \|\nabla f\|_{L^{2}(\rn)}  , \quad \| \nabla P^{*} P_\epsilon f \|_{L^{2}(\rn)} \leq \frac{C}{\epsilon} \|\nabla f\|_{L^{2}(\rn)}. \]
\end{proposition}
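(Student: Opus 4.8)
The goal is a quantitative estimate showing $H^*H_\epsilon f$ and $P^*P_\epsilon f$ lie in $W^{1,2}(\rn)$. The natural tool is Lemma~\ref{lem:Hderv}: it suffices to produce, for each of these maximal functions, a non-negative $g \in L^2(\rn)$ controlling the difference quotients and with $\|g\|_{L^2} \lesssim \epsilon^{-1/2}\|\nabla f\|_{L^2}$ (respectively $\epsilon^{-1}\|\nabla f\|_{L^2}$). Since the maximal functions are already known to lie in $L^2(\rn)$ (Section~\ref{sec:semigroups}), the lemma then gives weak differentiability and the stated bound with a new dimensional constant.

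\textbf{Key steps.} First I would fix Lebesgue points $x,y$ and write, for every $t>0$,
\[
H_t H_\epsilon f(x) - H_t H_\epsilon f(y) = H_{t+\epsilon}f(x) - H_{t+\epsilon}f(y),
\]
using the semigroup law. By Lemma~\ref{lem:Hderv} applied to the function $H_{t+\epsilon}f \in W^{1,2}(\rn)$ at its Lebesgue points, this difference is bounded by $|x-y|\big(M|\nabla H_{t+\epsilon}f|(x) + M|\nabla H_{t+\epsilon}f|(y)\big)$. The point is now to dominate $M|\nabla H_s f|$ uniformly in $s \geq \epsilon$ by a single $L^2$ function; here the analytic semigroup estimate of Lemma~\ref{lem:operator bounds} with $k=0$ gives $\|\nabla H_s f\|_{L^2(\rn)} \leq C s^{-1/2}\|f\|_{L^2(\rn)}$ — but that only controls the $L^2$ norm of $\nabla H_s f$, not pointwise. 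To get a pointwise envelope one should instead keep a derivative on $f$: since $\nabla$ commutes with $H_s$ (or more carefully, $\|\nabla H_s f\|_{L^2}$ can be compared via $\|\nabla H_s f\|_{L^2}^2 = \int -LH_sf\cdot H_sf$ and analyticity applied to $\nabla$ pulled through), one aims for $\|\nabla H_s f\|_{L^2(\rn)} \leq C\|\nabla f\|_{L^2(\rn)}$ uniformly in $s$ together with the smoothing $\|\nabla H_s f\|_{L^2} \le C s^{-1/2} \|f\|_{L^2}$; combined with the heat-kernel Gaussian bound from Proposition~\ref{prop:heatkernel} and Corollary~\ref{cor:boundedness heat extension} one controls $M|\nabla H_s f|$ pointwise. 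Then take the supremum over $t>0$ on the left: since $x,y$ are Lebesgue points,
\[
|H^*H_\epsilon f(x) - H^*H_\epsilon f(y)| \leq \sup_{t>0}|H_{t+\epsilon}f(x) - H_{t+\epsilon}f(y)| \leq |x-y|\big(g(x)+g(y)\big),
\]
with $g := \sup_{s \geq \epsilon} M|\nabla H_s f|$, and one estimates $\|g\|_{L^2} \lesssim \epsilon^{-1/2}\|\nabla f\|_{L^2}$. For the Poisson case one runs the identical argument replacing $H$ by $P$, using $P_tP_\epsilon = P_{t+\epsilon}$, Lemma~\ref{lem:boundedness of poisson extension} in place of Corollary~\ref{cor:boundedness heat extension}, and the $t^{-1}$ (rather than $t^{-1/2}$) smoothing rate, which accounts for the $\epsilon^{-1}$ on the right-hand side; the subordination formula~\eqref{eq:subordination} can be invoked if a direct kernel bound on $\nabla P_s$ is needed.

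\textbf{Main obstacle.} The delicate point is producing a \emph{pointwise} (a.e.) majorant for $|\nabla H_s f(x)|$, or at least for the difference quotients of $H_s f$, that is uniform in $s \geq \epsilon$ and lies in $L^2$ with the right $\epsilon$-dependence — the analytic-semigroup bounds of Lemma~\ref{lem:operator bounds} are $L^2$-norm bounds, not pointwise ones, so one must either route through the Gaussian kernel estimates (differentiating the kernel is not available for rough $A$, so one instead writes $\nabla H_s f = \nabla H_{s/2} (H_{s/2} f)$ and uses $\|\nabla H_{s/2}\|_{L^2 \to L^2} \lesssim s^{-1/2}$ together with the pointwise Gaussian bound on $H_{s/2}$ acting on $\nabla$-data, or applies Lemma~\ref{lem:Hderv}'s maximal-function characterization directly to $H_s f \in W^{1,2}$) or argue that the supremum in $t$ can be taken \emph{inside} $M$ after first discretizing $t$. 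Getting the constant to depend only on $n$ and $\Lambda$ requires that all the semigroup constants invoked ($C_k$ in~\eqref{eq:analyticity}, the kernel constants in Proposition~\ref{prop:heatkernel}) have that dependence, which they do. Everything else — measurability of $g$, passing the supremum through, the final appeal to Lemma~\ref{lem:Hderv} — is routine.
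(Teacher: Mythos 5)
There is a genuine gap at the decisive step. Your reduction (semigroup law $H_tH_\epsilon=H_{t+\epsilon}$, Hajłasz's Lemma~\ref{lem:Hderv} applied at Lebesgue points, and the wish to control an envelope of $|\nabla H_s f|$ uniformly in $s\geq\epsilon$) is exactly how the paper starts, but you never actually produce the required $L^2$ majorant. What you need is an $L^2(\rn)$ bound on the \emph{vertical supremum} $g(x):=\esssup_{s>\epsilon}|\nabla H_s f(x)|$ (after which $\sup_s M|\nabla H_sf|\leq Mg$ and Lemma~\ref{lem:Hderv} finishes), and none of the routes you sketch delivers it. The uniform bounds $\|\nabla H_sf\|_{L^2}\leq C\|\nabla f\|_{L^2}$ or $\leq Cs^{-1/2}\|f\|_{L^2}$ control each time slice but not the supremum over the uncountable family $s\geq\epsilon$. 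The commutation ``$\nabla H_s=H_s\nabla$'' is false for variable coefficients $A(x)$, and the alternative $\nabla H_sf=\nabla H_{s/2}(H_{s/2}f)$ does not help: the Gaussian bound of Proposition~\ref{prop:heatkernel} gives pointwise control of $H_{s/2}$ by $M$, but the outer operator $\nabla H_{s/2}$ is only $L^2$-bounded, and pointwise Gaussian bounds for $\nabla_xK_{t,L}(x,y)$ are in general \emph{not available} for merely bounded measurable, real symmetric $A$ (they require extra regularity of the coefficients). So ``one controls $M|\nabla H_sf|$ pointwise'' is precisely the claim that is missing a proof, and the discretize-$t$/move-$\sup$-inside-$M$ remark only reduces matters back to bounding $\|g\|_{L^2}$, which you have not done; this is the opposite of ``routine.''

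The paper closes this gap with a quantitative square-function-type argument in the $t$ variable. Since $\nabla u\in C^\infty(0,\infty;L^2(\rn))^n$ by Lemma~\ref{lem:operator bounds}, one may choose a representative of $\nabla u$ that is absolutely continuous in $t$ for a.e.\ $x$ and write, for $t>\epsilon$,
\begin{align*}
|\nabla H_tf(x)|^2=|\nabla H_\epsilon f(x)|^2+2\int_\epsilon^t\nabla H_sf(x)\cdot\partial_s\nabla H_sf(x)\,ds
\leq|\nabla H_\epsilon f(x)|^2+\int_\epsilon^\infty|\nabla H_sf(x)|^2\,\frac{ds}{s}+\int_\epsilon^\infty|\partial_s\nabla H_sf(x)|^2\,s\,ds,
\end{align*}
by Young's inequality; the right-hand side is independent of $t$, so it dominates $g(x)^2$, and integrating in $x$ and using the analyticity bounds of Lemma~\ref{lem:operator bounds} with $k=0,1$ gives $\|g\|_{L^2}^2\leq C\epsilon^{-1}\|f\|_{L^2}^2$ (and $C\epsilon^{-2}\|f\|_{L^2}^2$ for the Poisson semigroup, whence the different powers of $\epsilon$). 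This converts sup-in-$t$ into $t$-integrals that Fubini and the $L^2$ operator bounds can handle, which is exactly the mechanism your proposal lacks; without it, or some substitute for it, the proof does not go through.
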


\begin{proof}
Let $(S_t)_{t \geq 0}$ be either one of the two semigroups. For any $x,y \in \rn$ we have
\begin{align*}
|S^*  S_\epsilon f (x) -S^* S_\epsilon f(y)| 
&\leq \sup_{t> 0} |S_t S_\epsilon f (x) - S_t S_\epsilon f(y)| \\
&= \esssup_{t > \eps}  |S_t f (x) - S_t f(y)|,
\end{align*}
where the second step follows from the semigroup property and the continuity of $u(t,x) = S_tf(x)$ in the upper half-space. Since $S_t f \in C(\rn) \cap W^{1,2}(\rn)$ for all $t > 0$, Lemma~\ref{lem:Hderv} yields
\begin{align}
\label{eq0:qualitative_sobolev}
|S^* S_\epsilon f (x) - S^* S_\epsilon f(y)|
&\leq |x-y| (Mg(x) + Mg(y)),
\end{align}
where
\begin{align*}
g(x) =  \esssup_{t > \epsilon} | \nabla S_t f(x)|.
\end{align*}
Note carefully that the definition of $g$ is subject to having fixed a representative for $\nabla u$ but \eqref{eq0:qualitative_sobolev} is the same for all $x,y$ and all representatives. By Lemma~\ref{lem:operator bounds} we have $\nabla u \in C^\infty(0,\infty; L^2(\rn))^n$ and in particular $\nabla \dot{u} \in L_{loc}^2((0,\infty) \times \rn)^n$. Hence, we can pick a representative $\overline{\nabla u}$ such that for a.e.\ $x \in \rn$ the restriction $\overline{\nabla u}(\cdot,x)$ is absolutely continuous on all intervals $I \Subset (0,\infty)$, and such that $\partial_t \overline{\nabla u}$ is a representative of $\nabla \dot{u}$. See again Theorem~2.1.4 in \cite{Ziemer}. In the following we make no notational distinction between $\nabla u = \nabla S_t f$ and its special representative.

Since we already know $S^* S_\eps f \in L^2(\rn)$, we can use Lemma \ref{lem:Hderv} along with the $L^2$ boundedness of the Hardy--Littlewood maximal function, to conclude
\begin{align}
\label{eq1:qualitative_sobolev}
\|  \nabla S^*S_\epsilon f \|_{L^{2}(\rn)}
\leq C_n \|g\|_{L^{2}(\rn)}.
\end{align}
It remains to estimate the right-hand side in \eqref{eq1:qualitative_sobolev}. Fix $x \in \rn$ such that $|\nabla S_t f (x)|$ is absolutely continuous in $t$ on compact intervals. Then we can apply the chain rule for absolutely continuous functions to $\partial_t |\nabla S_t f (x)|^2$ in order to obtain for all $t>\eps$ that
\begin{align}
\label{eq2:qualitative_sobolev}
|\nabla S_t f (x)|^2 
&= |\nabla S_\epsilon f (x)|^{2} +  2 \int_{\epsilon}^{t} \nabla S_s f(x) \cdot  \partial _s \nabla  S_s f (x) \, ds.
\end{align}

In the case of the heat semigroup $S_t = H_t$ we distribute powers of $s$ and apply the elementary Young's inequality to give
\begin{align*}
\sup_{t > \epsilon} |\nabla H_t f (x)|^2 
&\leq |\nabla H_\epsilon f (x)|^{2} + \int_{\epsilon}^{\infty}  |\nabla H_s f(x)|^2  \, \frac{ds}{s}  +  \int_{\epsilon}^{\infty} |\partial _s \nabla  H_s f(x)|^2 \, s ds.
\end{align*}
Integration in $x$ and the operator bounds in Lemma~\ref{lem:operator bounds} lead us to
\begin{align*}
\| (\sup_{t > \epsilon} | \nabla H_t f|) \|_{L^{2}(\rn)}^2 \leq \frac{C}{\eps}
\end{align*}
and in view of \eqref{eq1:qualitative_sobolev} the proof is complete. The proof for the Poisson semigroup is exactly the same, with the difference in the powers of $\epsilon$ coming from the estimates for the Poisson semigroup in Lemma~\ref{lem:operator bounds}. 
\end{proof}

We are in a position to prove our second main result.

\begin{proof}[Proof of Theorem \ref{thm:A-para}]
	
We follow the pattern of the proof of Theorem~\ref{thmintro:1}.

\noindent \emph{Step 1.} We first assume that $f = H_\epsilon g$ for some $g \in W^{1,2}(\rn)$ and some $\epsilon > 0$. Then $f$ is continuous.
Our goal is to prove
\begin{align}
\label{eq:A-para-goal}
\int_{\rn} A \nabla H^*f \cdot H^*f \, dx \leq \int_{\rn} A \nabla f \cdot \nabla f \, dx.
\end{align}
By Proposition \ref{prop:qualitative_sobolev} we have $H^* f \in W^{1,2}(\rn)$.
Proposition~\ref{prop:DeGiorgi} and Corollary~\ref{cor:boundedness heat extension} yield that all $H_t f$ are locally H\"older continuous, uniformly in $t > 0$. Hence, their pointwise supremum $H^* f$ is (locally H\"older) continuous and therefore $E := \{x \in \rn : H^* f(x)  >  f(x)\}$ is open. On the complement we have again $\nabla H^*f = \nabla f$, so that it suffices to prove the estimate for the localized energies
\begin{align*}
\mathcal{F}_E(H^*f) = \int_{E} A \nabla H^*f \cdot H^*f \, dx \leq \int_{E} A \nabla f \cdot \nabla f \, dx = \mathcal{F}(f).
\end{align*}
To this end let $u(t,x) := H_t f(x)$ and $L := \div(A\nabla \cdot)$. The comparison principle of Lemma~\ref{lemma:p-parab_comparison} guarantees again that $u$ satisfies the assumptions of Lemma \ref{lemma:abstract_comparison}. Therefore $H^* f$ satisfies the comparison principle of Definition~\ref{def:comparison} with respect to $L$. By Lemma~\ref{lemma:comparison_submin} it is a subminimizer for the $A$-energy in all bounded open subsets of $E$. This being said, the energy estimate on $E$ follows literally as in \eqref{eq:submin-exhaustion-argument}.

\noindent \emph{Step 2.} Let now $f \in W^{1,2}(\rn)$ be arbitrary. Ellipticity, \eqref{eq:A-para-goal} and Lemma~\ref{lem:semigroup energy dissipation} imply
\begin{align}
\label{eq:A-para-intermediate}
\begin{split}
\Lambda^{-1} \|\nabla H^* H_\epsilon f\|_{L^2(\rn)}^2
&\leq \int_{\rn} A \nabla H^* H_\epsilon f\cdot \nabla H^* H_\epsilon f \, d x \\
&\leq \int_{\rn} A \nabla f\cdot \nabla f \, d x 
\leq \Lambda \|\nabla f\|_{L^2(\rn)}^2
\end{split}
\end{align}
for all $\epsilon > 0$. Hence, we can extract a subsequence $\epsilon_j \to 0$ such that
\begin{align*}
\nabla H^* H_{\epsilon_j}f \to G', \text{ weakly in } L^2(\rn).
\end{align*}
By definition of the maximal function, we have monotone convergence $H^* H_{\eps_j} f \to H^* f$ a.e.\ on $\rn$. Since $H^*f$ is finite almost everywhere, a literal repetition of Step~4 in the proof of Theorem~\ref{thmintro:1} reveals that $H^*f$ is locally integrable. By dominated convergence we obtain $H^* H_{\epsilon_j}f \to H^* f$ in the sense of distributions, whereupon $G' = \nabla H^*f$ follows.

Since $A(x)$ is symmetric, we can write $A(x) \xi \cdot \xi = |A(x)^{1/2}\xi|^{2}$ for all $\xi \in \rn$, using the positive-definite square root of $A(x)$. From above we obtain weak convergence $A^{1/2} \nabla H^*  H_{\epsilon_j} f \to A^{1/2} \nabla H^*f$ in $L^2(\rn)$. Hence, using the square-root decomposition twice, we find
\begin{align*}
\int_{\rn} A \nabla H^* f \cdot  H^*f \, dx 
	\leq \liminf_{\epsilon_j \to 0} \int_{\rn} A \nabla H^* H_{\epsilon_j} f \cdot  H^* H_{\epsilon_j} f \, dx 
	\leq \int  A \nabla f \cdot \nabla f\, dx,
\end{align*}
where the final step is due to \eqref{eq:A-para-intermediate}.
\end{proof}

Finally, we give the proof of the corresponding result for the Poisson maximal function.
\begin{proof}[Proof of Theorem~{\ref{thm:A-poisson}}]
In Section~\ref{sec:semigroups} and Proposition~\ref{prop:qualitative_sobolev} we have seen that Poisson and heat semigroup share the exact same qualitative properties. Therefore we can follow the lines of the proof of Theorem~\ref{thm:A-para} upon replacing $H_t$ by $P_t$, with the one exception that we cannot appeal to Lemma~\ref{lemma:p-parab_comparison} when verifying the assumptions of Lemma~\ref{lemma:abstract_comparison} in Step~1. Indeed, $u(t,x) := P_t f(x)$, where $f = P_\eps g$ for some $\epsilon > 0$ and a non-negative $g \in W^{1,2}(\rn)$, is a continuous weak solution of the elliptic equation 
\begin{align*}
(\partial_t^2 + L)u = 0 
\end{align*}
in $[0,\infty) \times \rn$, rather than a parabolic one. 

In order to verify that the hypotheses of Lemma \ref{lemma:abstract_comparison} are still met, let $E \subset \rn$ be an open set so that $P^*f > f$ in $E$. Let $G \Subset E$ and let $h \in C(\overline{G})$ with $Lh = 0$ in the sense of distributions and $c \in \re$ be such that $h(x) + c \geq u(t,x)$ for all $(t,x) \in \{0\} \times G \cup [0,T) \times \partial G$. 
In particular, we have $h(x)+c \geq 0$. Hence, the decay condition from Lemma~\ref{lem:boundedness of poisson extension} gives that for any $\eta > 0$ there exists $T_\eta >0$ such that  $h(x)  + c \geq u(T,x) - \eta$ for all $T \ge T_\eta$ and $x \in G$. By the comparison principle for $\partial_t^2 +L$ (see Remark~\ref{rem:comparison}) we conclude $h + c \geq u - \eta$ in $G \times [0,T]$ for all $T \ge T_\eta$ and hence in $G \times [0,\infty)$. Sending $\eta \to 0$, we have $h + c \ge u$ in $G \times [0, \infty)$, as desired.
\end{proof}

\bibliography{Refs}

\bibliographystyle{abbrv}

\end{document}